\documentclass{amsart}
\usepackage{amssymb,amsthm}
\usepackage{hyperref}

\newtheorem{theorem}{Theorem}[section]
\newtheorem{lemma}[theorem]{Lemma}

\newtheorem{conjecture}[theorem]{Conjecture}
\newtheorem{proposition}[theorem]{Proposition}
\newtheorem{example}[theorem]{Example}
\theoremstyle{definition}
\newtheorem{definition}[theorem]{Definition}

\newcommand{\C}{\mathrm{C}}
\newcommand{\D}{\mathrm{D}}

\newcommand{\FF}{\mathbb F}

\newcommand{\Cay}{\mathop{\mathrm{Cay}}}
\newcommand{\soc}{\mathop{\mathrm{soc}}}
\newcommand{\Aut}{\mathop{\mathrm{Aut}}}
\newcommand{\Sym}{\mathop{\mathrm{Sym}}}
\newcommand{\Alt}{\mathop{\mathrm{Alt}}}
\newcommand{\PSL}{\mathop{\mathrm{PSL}}}
\newcommand{\PGL}{\mathop{\mathrm{PGL}}}
\newcommand{\SL}{\mathop{\mathrm{SL}}}

\renewcommand{\wr}{\mathop{\rm wr}}

\newcommand{\vGa}{{\Gamma}}

\newcommand{\UDRR}{\mathrm{UDRR}}

\newcommand{\UAGRR}{\mathrm{USmall}}

\newcommand{\UCDN}{\mathrm{UCDN}}
\newcommand{\UCGN}{\mathrm{UCGN}}

\def\Z#1{{\bf Z}(#1)}
\def\cent#1#2{{\bf C}_{#1}(#2)}
\def\nor#1#2{{\bf N}_{#1}(#2)}
\newcommand{\norml}{\trianglelefteq}

\title{Cayley graphs on abelian groups}

\author[E. Dobson]{Edward Dobson}
\address{Edward Dobson, Department of Mathematics and Statistics,   \newline
Mississippi State University, PO Drawer MA, Mississippi State, MS 39762, USA.\newline
\indent Also affiliated with : University of Primorska, In\v{s}titut Andrej Maru\v{s}ic, \newline
\indent Muzejski trg 2, 6000 Koper, Slovenia.}
\email{dobson@math.msstate.edu}

\author[P. Spiga]{Pablo Spiga}
\address{Pablo Spiga, Departimento di Matematica Pura e Applicata,\newline
 University of Milano-Bicocca, Via Cozzi 53, 20126 Milano, Italy.}
\email{pablo.spiga@unimib.it}

\author[G. Verret]{Gabriel Verret}
\address{Gabriel Verret, Centre for Mathematics of Symmetry and Computation, \newline
\indent School of Mathematics and Statistics, The University of Western Australia, \newline
\indent 35 Stirling Highway, Crawley, WA 6009, Australia. \newline
\indent Also affiliated with : University of Primorska, FAMNIT, \newline
\indent Glagolja\v{s}ka 8, 6000 Koper, Slovenia.}

\email{gabriel.verret@uwa.edu.au}

\thanks{The third author is supported by UWA as part of the Australian Research Council grant DE130101001. Address correspondence to Pablo Spiga: pablo.spiga@unimib.it.}

\subjclass[2010]{Primary 20B25; Secondary 05E18}

\keywords{Cayley graph, regular abelian group, enumeration}

\begin{document}

\begin{abstract}
Let $A$ be an abelian group and let $\iota$ be the automorphism of $A$ defined by $\iota:a\mapsto a^{-1}$. A Cayley graph $\vGa=\Cay(A,S)$ is said to have an automorphism group \emph{as small as possible} if $\Aut(\Gamma)= A\rtimes\langle\iota\rangle$. In this paper, we show that almost all Cayley graphs on abelian groups have automorphism group as small as possible, proving a conjecture of Babai and Godsil.
\end{abstract}

\maketitle
\baselineskip=1.3\normalbaselineskip

\section{Introduction}
All digraphs and groups considered in this paper are finite. By a {\em digraph} $\vGa$, we mean an ordered pair $(\mathcal{V},\mathcal{A})$ where the \emph{vertex-set} $\mathcal{V}$ is a finite non-empty set and the {\em arc-set} $\mathcal{A}$ is a binary relation on $\mathcal{V}$. The elements of $\mathcal{V}$ and $\mathcal{A}$ are called \emph{vertices} and \emph{arcs} of $\vGa$, respectively. The digraph $\vGa$ is called a {\em graph} when the relation $\mathcal{A}$ is symmetric. An \emph{automorphism} of $\vGa$ is a permutation of $\mathcal{V}$ which preserves the relation $\mathcal{A}$.

Let $G$ be a group and let $S$ be a subset of $G$. The \emph{Cayley digraph} on $G$ with connection set $S$, denoted $\Cay(G,S)$, is the digraph with vertex-set $G$ and with $(g,h)$ being an arc if and only if $gh^{-1}\in S$. Note that we do not require our Cayley digraphs to be connected and that they may have loops. It is an obvious observation that $\Cay(G,S)$ is a graph if and only if $S$ is inverse-closed, in which case it is called a \emph{Cayley graph}. It is also easy to check that $G$ acts regularly as a group of automorphisms of $\Cay(G,S)$ by right multiplication.

When studying a Cayley digraph $\Cay(G,S)$, a very important question is to determine whether $G$ is in fact the full automorphism group. When it is, $\Cay(G,S)$ is called a \emph{DRR} (for digraphical regular representation). A DRR which is a graph is called a \emph{GRR} (for graphical regular representation).

DRRs and GRRs have been widely studied. The most natural question is the ``GRR problem'': which groups admit GRRs? The answer to this question was completed by Godsil~\cite{Godsil}, after a long series of partial results by various authors (see~\cite{Hetzel,Imrich,NowWat} for example). The equivalent problem for digraphs was solved by Babai~\cite{Ba} (curiously, the ``DRR problem'' was mainly considered after the GRR problem had been solved). In the course of working on these and related problems, Babai and Godsil made the following conjecture~\cite{BaGo}.

\begin{conjecture}\label{digraphmainconjecture}
Let $G$ be a group of order $n$. The proportion of subsets $S$ of $G$ such that $\Cay(G,S)$ is a DRR goes to $1$ as $n\to\infty$.
\end{conjecture}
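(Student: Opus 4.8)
The plan is to bound the number of \emph{bad} connection sets---those $S\subseteq G$ for which $\Cay(G,S)$ is not a DRR---and to show this number is $o(2^n)$. The digraph $\Cay(G,S)$ fails to be a DRR exactly when the stabiliser of the identity vertex in $\Aut(\Cay(G,S))$ is nontrivial, in which case this stabiliser contains an element $\sigma$ of prime order fixing the identity $1$. Write $R$ for the regular representation of $G$ acting by right multiplication. The Cayley digraphs admitting a fixed such $\sigma$ as an automorphism are precisely the subsets of $G\times G$ invariant under $H:=\langle R,\sigma\rangle$ (such a subset is automatically $R$-invariant, hence an arc-set $\Cay(G,S)$), so their number is $2^{d(H)}$, where $d(H)$ is the number of orbits of $H$ on $G\times G$, equivalently the number of $H$-suborbits. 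Unwinding the condition, $\sigma\in\Aut(\Cay(G,S))$ means $gh^{-1}\in S\iff\sigma(g)\sigma(h)^{-1}\in S$ for all $g,h\in G$; the case $h=1$ already forces $\sigma(S)=S$, which yields the easy estimate $d(\langle R,\sigma\rangle)\le n-\tfrac12 s(\sigma)$, where $s(\sigma)$ is the number of points moved by $\sigma$. The first attempt is then the union bound
\[
\#\{S:\Cay(G,S)\text{ is not a DRR}\}\ \le\ \sum_{\sigma}\,2^{d(\langle R,\sigma\rangle)},
\]
with $\sigma$ ranging over the prime-order permutations of $G$ fixing $1$.

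The first thing to confront is that this union bound, term by term, does \emph{not} work, and this is the heart of the difficulty. For arbitrary (non-Cayley) digraphs on $n$ vertices one wins easily, because the ambient space has size $2^{n^2}$ while a single moved point merges about $n$ coordinates, so the saving per moved point dwarfs the $\log n$ cost of describing $\sigma$. For Cayley digraphs the ambient space is only $2^n$, and a moved point typically merges only $O(1)$ of the $n$ difference classes; the resulting saving $2^{-s(\sigma)/2}$ is far too weak to beat the number $\approx\bigl(cn/\sqrt{s}\bigr)^{s}$ of prime-order permutations of support $s$. One therefore cannot treat the automorphisms $\sigma$ one at a time.

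The way forward is to organise the bad sets not by individual automorphisms but by the overgroup $H=\langle R,\sigma\rangle$ of the regular representation, and to exploit the structure of transitive groups properly containing a regular subgroup. I would split according to the minimal degree of $H$, the least support of a nonidentity element. When the minimal degree is large, every nontrivial element moves many points, the suborbits of $H$ are few, and a counting argument---now feasible because the relevant overgroups are comparatively rare---gives a contribution $o(2^n)$. The hard regime, and the main obstacle, is small minimal degree: here $H$ typically preserves a nontrivial block system whose blocks are cosets of a subgroup $K\le G$, so that $\Cay(G,S)$ is a wreath-type (generalised lexicographic) product relative to $K$ and $S$ is constrained to be a union of $K$-cosets outside $K$. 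One must prove a structure theorem classifying all overgroups of $R$ that contain small-support elements, and then show that, summed over the admissible subgroups $K$ and decompositions, the number of resulting $S$ is still $o(2^n)$.

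Establishing that structure theorem is where the real work lies. In the abelian setting it is natural to argue through the Schur ring over $G$ determined by $H$: the identifications forced among the difference classes make $S$ a union of basic sets of this ring, an $\iota$-invariant object whose possibilities are governed by the subgroup lattice of $G$---this is the route that makes the abelian case tractable and underlies the result announced in the abstract. For general $G$ one instead needs the classification-based bounds of Babai and of Liebeck--Saxl on transitive groups of small minimal degree to rule out or control the primitive possibilities. Granting such control, the large- and small-minimal-degree contributions are each $o(2^n)$, and adding them shows the number of bad $S$ is $o(2^n)$; hence the proportion of $S$ with $\Cay(G,S)$ a DRR tends to $1$, as the conjecture asserts.
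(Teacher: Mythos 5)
Your proposal targets the full conjecture for an arbitrary group $G$, but the paper does not prove this statement: it is stated there as a conjecture of Babai and Godsil, and the paper settles only the abelian case (Theorems~\ref{th:main1} and~\ref{th:epsilon1}), leaving the general case open. More importantly, your argument is a program rather than a proof: the two steps that carry all of the difficulty are explicitly deferred. First, the ``structure theorem classifying all overgroups of $R$ that contain small-support elements'' is never proved, and for a general group nothing of the sort is available --- producing it is essentially the open problem itself. Second, your claim that in the large-minimal-degree regime the overgroups $H$ are ``comparatively rare'' and have so few suborbits that their total contribution is $o(2^n)$ is asserted without any estimate; this is exactly the regime in which, as you correctly observe, the term-by-term union bound over single automorphisms diverges (the saving $2^{-s(\sigma)/2}$ cannot beat the roughly $n^{s}$ permutations of support $s$), and you offer no replacement for it. The phrase ``granting such control'' concedes the gap: a correct diagnosis of why the naive bound fails does not by itself yield a proof.

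It is also worth noting that, for the abelian case --- the only case the paper proves --- the paper's route differs from your sketch. The dichotomy is not on minimal degree but on the normalizer of $A$ in $G=\Aut(\Cay(A,S))$: if $\nor G A>A$, then $S$ is invariant under a nontrivial automorphism of $A$, and there are at most $2^{3n/4+(\log_2(n))^2}$ such subsets (Lemma~\ref{lemma:normalizeddigraphcounting}); if instead $\nor G A=A$, then $A$ is a proper self-normalizing regular subgroup of $G$, and Theorem~\ref{thm:graphmain} (resting on the structure Theorem~\ref{thm:main}, which is proved with Frobenius groups via Lemma~\ref{lemma:Fr}, Burnside's normal $p$-complement theorem and coprime action --- no Schur rings and, in the digraph case, no classification-based input) forces $\Cay(A,S)$ to be a generalized wreath digraph, of which there are at most $2^{3n/4+2(\log_2(n))^2}$ (Lemma~\ref{lemma:semiwreathdigraphcounting}). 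Summing the two bounds gives Theorem~\ref{th:epsilon1}, hence Theorem~\ref{th:main1}. Your Schur-ring suggestion for the abelian case retraces the earlier cyclic-group work of Bhoumik, Dobson and Morris (via Leung--Man and Evdokimov--Ponomarenko); such classifications are not available for arbitrary abelian groups, which is precisely why the paper develops the normalizer dichotomy instead.
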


In other words, ``almost all Cayley digraphs are DRRs''. Godsil showed that Conjecture~\ref{digraphmainconjecture} holds if $G$ is a $p$-group with no homomorphism onto $\C_p \wr \C_p$~\cite{Go2}, and Babai and Godsil extended this to verify the conjecture in the case that $G$ is nilpotent of odd order~\cite[Theorem 2.2]{BaGo}.  One of the results of this paper is a proof of Conjecture \ref{digraphmainconjecture} when $G$ is an abelian group.

\begin{theorem}\label{th:main1}
Let $A$ be an abelian group of order $n$.  The proportion of subsets $S$ of $A$ such that $\Cay(A,S)$ is a DRR goes to $1$ as $n\to\infty$.
\end{theorem}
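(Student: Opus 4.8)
The plan is to show that the proportion of subsets $S$ of $A$ for which $\Cay(A,S)$ fails to be a DRR tends to $0$. Since $\Aut(\Cay(A,S))$ always contains the regular copy of $A$, failure to be a DRR means there exists a non-identity automorphism fixing the identity vertex. My strategy is to split the failures according to the \emph{type} of ``extra'' automorphism, and to bound the count in each type separately. The dominant competing automorphisms are group automorphisms of $A$: indeed, for any $\varphi\in\Aut(A)$, the map $\varphi$ fixes $S$ (equivalently stabilizes the identity as a graph automorphism) precisely when $\varphi(S)=S$, so such a $\varphi$ lies in $\Aut(\Cay(A,S))$. Thus the first and most important step is to estimate, for each fixed nontrivial $\varphi\in\Aut(A)$, the number of $S\subseteq A$ with $\varphi(S)=S$, and to sum these estimates over all $\varphi$.

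The key counting principle is that the number of $\varphi$-invariant subsets of $A$ is exactly $2^{c(\varphi)}$, where $c(\varphi)$ is the number of orbits of $\langle\varphi\rangle$ on $A$; the total number of subsets is $2^n$. Hence the proportion of $S$ fixed by a given $\varphi$ is $2^{c(\varphi)-n}$. The first technical task is therefore to obtain a good upper bound on $c(\varphi)$ — equivalently, a lower bound on $n-c(\varphi)$ (the number of ``moved'' orbits, weighted appropriately) — that is uniform enough that, after summing $2^{c(\varphi)-n}$ over all nontrivial $\varphi\in\Aut(A)$, the total still goes to $0$. The crucial input here is that for an automorphism of an abelian group, a nontrivial $\varphi$ cannot fix too much of $A$: one controls $c(\varphi)$ via the structure of the fixed subgroup and the cycle structure of $\varphi$ on the complement. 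The subtlety is that $\Aut(A)$ can itself be quite large (for instance when $A$ is an elementary abelian $p$-group, $\Aut(A)$ is a full general linear group of size roughly $|A|^{\log_p|A|}$), so I must verify that the orbit counts $c(\varphi)$ decay fast enough to beat this growth. I would handle this by first disposing of the easy involution $\iota$ and its interaction, then treating elementary abelian groups as the worst case and bounding $\sum_{\varphi\neq 1}2^{c(\varphi)}$ against $2^n$ via estimates on the number of $\GL$-elements with a given fixed-space dimension.

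Group automorphisms are not the only obstruction: there can be automorphisms of $\Cay(A,S)$ fixing the identity that do \emph{not} come from $\Aut(A)$. The second major step is to rule these out for almost all $S$. Here I expect the argument to reduce to a local/structural analysis: an automorphism fixing the identity but not normalizing the regular representation of $A$ must, for a random $S$, create ``coincidences'' among the neighbourhood structure that occur with vanishingly small probability. Concretely, I would argue that if $\Cay(A,S)$ is not a DRR, then either some nontrivial $\varphi\in\Aut(A)$ fixes $S$, or the graph admits a more exotic symmetry that forces $S$ to lie in a thin, explicitly describable family (for example, $S$ being a union of cosets or being invariant under translation-like relations), and each such family again has density tending to $0$.

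The hard part, I anticipate, will be the second step — controlling automorphisms not arising from $\Aut(A)$ — since the group-automorphism count in the first step is a clean, essentially combinatorial estimate, whereas bounding arbitrary point-stabilizer elements requires understanding how the full automorphism group of a random Cayley graph can be larger than $A\rtimes\Aut(A)$. I would expect the resolution to invoke the orbit/block structure of a putative overgroup of the regular representation of $A$ inside $\Sym(A)$, using the classification-flavoured or combinatorial tools already developed for the DRR and GRR problems to show that any such overgroup forces strong regularity conditions on $S$; these conditions then carve out a negligible proportion of connection sets, completing the proof that the proportion of non-DRRs tends to $0$.
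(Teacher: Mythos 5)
Your first step is sound and is essentially the paper's own treatment of that case (its Lemma~\ref{lemma:normalizeddigraphcounting}): for a fixed $\varphi\in\Aut(A)\setminus\{1\}$ the number of $\varphi$-invariant subsets is $2^{c(\varphi)}$ with $c(\varphi)\leq 3n/4$ (orbits of length $1$ on $\cent A\varphi$, length at least $2$ elsewhere, and $|\cent A\varphi|\leq n/2$), and multiplying by the crude bound $|\Aut(A)|\leq n^{\log_2 n}=2^{(\log_2 n)^2}$ already gives $o(2^n)$. So your worry about elementary abelian groups and fixed-space-dimension statistics is unnecessary: no refinement of this union bound is needed.

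The genuine gap is your second step, which is precisely the mathematical content of the paper (its Sections~\ref{sec:prelim}--\ref{sec:AppDigraph}) and for which you offer only a hoped-for conclusion, not an argument. What is needed is: if $G=\Aut(\Cay(A,S))>A$ and $\nor G A=A$, then (Theorem~\ref{thm:graphmain}) there exist subgroups $1<H\leq K<A$, determined by $G$ alone, such that $S\setminus K$ is a union of $H$-cosets (a ``generalized wreath'' digraph); only then does the easy count $2^{k+(n-k)/h}\leq 2^{3n/4}$, times at most $2^{2(\log_2 n)^2}$ choices of $(H,K)$, finish the proof. Your guess that exotic symmetries force $S$ to be ``a union of cosets'' points in the right direction, but proving it requires real group theory, not a coincidence/probability heuristic: the paper first replaces $G$ by a subgroup in which $A$ is maximal (self-normalization is inherited), quotients by the core $N$ of $A$ to get a primitive group with abelian point-stabilizer, shows via a Frobenius-type argument (Lemma~\ref{lemma:Fr}) that this quotient is affine with cyclic stabilizer, and then, through Burnside's normal $p$-complement theorem and coprime-action decompositions, obtains $G=(G_1\times Q)\rtimes S$ together with the key fact (Theorem~\ref{thm:main}(\ref{youm})) that $G_1G_s\cap A$ is the \emph{same} subgroup $H$ for every point $s$ outside $\nor G{G_1}$; this is what makes all $G_1$-orbits off $K=N$ cosets of one fixed $H$. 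The ``tools already developed for the DRR and GRR problems'' that you invoke do not supply this: Babai and Godsil's methods handled nilpotent groups of odd order and $p$-groups precisely because this structural theorem for general abelian regular subgroups was missing, which is why the conjecture remained open. Without that theorem (or an equivalent), the non-DRR sets in the self-normalizing case cannot be confined to a family of density $0$, and the proof does not close.
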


It is not possible to prove a directly analogous result for inverse-closed subsets and GRRs, for simple reasons which we now explain.

Let $A$ be an abelian group and let $\iota$ be the automorphism of $A$ defined by $\iota:a\mapsto a^{-1}$ for every $a\in A$. It is not hard to see that every Cayley graph on $A$ admits $A\rtimes\langle\iota\rangle$ as a group of automorphisms. On the other hand, if $A$ has exponent greater than $2$ then $\iota\neq 1$ and $A\rtimes\langle\iota\rangle>A$, and hence no Cayley graph on $A$ is a GRR.

Similarly, a generalized dicyclic group  also admits a non-trivial automorphism which maps every element either to itself or to its inverse  (see~\cite{Nowitz1968}) and hence generalized dicyclic groups form another infinite family of groups which do not admit GRRs. It is believed that these two families are the only obstructions to Conjecture \ref{digraphmainconjecture} holding for graphs. More precisely, Babai, Godsil, Imrich and Lov\'asz made the following conjecture~\cite[Conjecture 2.1]{BaGo}.

\begin{conjecture}\label{graphmainconjecture}
Let $G$ be a group of order $n$ which is neither generalized dicyclic nor abelian of exponent greater than $2$. The proportion of inverse-closed subsets $S$ of $G$ such that $\Cay(G,S)$ is a GRR goes to $1$ as $n\to\infty$.
\end{conjecture}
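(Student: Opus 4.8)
The plan is to deduce the conjecture from two independent statements, reflecting the two ways in which $\Aut(\Cay(G,S))$ can strictly exceed the regular subgroup $G$. Writing $A=\Aut(\Cay(G,S))$ and identifying $G$ with its right regular representation, $\Cay(G,S)$ is a GRR precisely when $A=G$, equivalently when the stabiliser $A_1$ of the identity vertex is trivial. I would split the analysis according to whether $A$ normalises $G$: either $A\leq \nor{\Sym(G)}{G}=G\rtimes\Aut(G)$, the holomorph, in which case $A_1$ consists of group automorphisms fixing $S$ setwise; or $A$ contains an automorphism not normalising $G$. The target is to show that, for $G$ neither generalized dicyclic nor abelian of exponent greater than $2$, both ``bad'' events occur for only a vanishing proportion of the inverse-closed sets $S$.

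\emph{The group-automorphism case.} If $A\leq G\rtimes\Aut(G)$ then $A=G\rtimes\Aut(G)_S$, so $\Cay(G,S)$ fails to be a GRR only when some nonidentity $\varphi\in\Aut(G)$ fixes $S$ setwise. Testing $\varphi$ against the inverse-closed sets $\{g,g^{-1}\}$ shows that $\varphi$ fixes \emph{every} inverse-closed $S$ if and only if $\varphi(g)\in\{g,g^{-1}\}$ for all $g\in G$; by the classification underlying the excluded families (cf.\ \cite{Nowitz1968}), a nonidentity automorphism of this kind exists exactly when $G$ is abelian of exponent greater than $2$ or generalized dicyclic. Since these are excluded, every nonidentity $\varphi\in\Aut(G)$ moves some element off $\{g,g^{-1}\}$. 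The number of inverse-closed $\varphi$-invariant sets is $2^{r(\varphi)}$, where $r(\varphi)$ counts the orbits of $\langle\varphi,\iota\rangle$ on $G$, while the whole space of inverse-closed sets has dimension the number of $\langle\iota\rangle$-orbits. Using that $\mathrm{Fix}(\varphi)$ is a proper subgroup, hence of index at least $2$, I would show that $r(\varphi)$ is smaller than this dimension by a constant fraction $\varepsilon n$; summing $2^{r(\varphi)}$ over the at most $n^{\log_2 n}$ automorphisms $\varphi\neq 1$ then yields a vanishing proportion. The sub-obstacle here is quantitative: one must upgrade ``$\varphi$ displaces one element'' to ``$\varphi$ displaces a positive proportion of $G$'', which requires bounding $|\{g:\varphi(g)=g^{-1}\}|$ alongside $|\mathrm{Fix}(\varphi)|$.

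\emph{The non-normalising case, and the main obstacle.} The hard part is to show that the proportion of inverse-closed $S$ with $A\not\leq \nor{\Sym(G)}{G}$ tends to $0$. Here I would pass to a minimal $A$-invariant partition to reduce to the (quasi)primitive situation, and invoke the classification of finite simple groups, in the form of the known lists of primitive permutation groups containing a regular subgroup, to control the possible ``large'' overgroups of $G$. The heart is a fixity estimate: for each nonidentity $\tau\in A_1$ the number of inverse-closed $S$ admitting $\tau$ as an automorphism is at most $2^{c(\tau)}$, with $c(\tau)$ governed by the orbits of $\tau$, and one must prove that the dangerous $\tau$ (those fixing many vertices, arising from wreath-type imprimitivity) are too few, or too constrained, to contribute. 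For the abelian groups treated in this paper the relevant block structure is tractable; the genuine difficulty in the full conjecture is obtaining a single fixity bound valid \emph{uniformly} across all admissible $G$, since the generalized dicyclic and exponent-$2$ phenomena must be excised while truly non-normal behaviour is shown to be exponentially rare.

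Combining the two parts: outside a vanishing set of $S$ one has $A\leq G\rtimes\Aut(G)$, and outside a further vanishing set no nonidentity group automorphism fixes $S$; hence $A=G$ and $\Cay(G,S)$ is a GRR for almost all inverse-closed $S$, as required.
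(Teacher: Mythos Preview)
The statement you are attempting to prove is Conjecture~\ref{graphmainconjecture}, which the paper records as an \emph{open} conjecture of Babai, Godsil, Imrich and Lov\'asz; there is no proof of it in the paper to compare against. The paper establishes only the abelian case, in the guise of Conjecture~\ref{graphsecondconjecture}/Theorem~\ref{th:main2}, and mentions that the generalized dicyclic analogue is handled in~\cite{Morris}. The full conjecture for arbitrary $G$ remains open.

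Your proposal is accordingly not a proof but a strategy outline, and you acknowledge as much: in the ``non-normalising case'' you explicitly identify the main obstacle---a uniform fixity bound valid across all admissible $G$---without resolving it. Passing to a minimal block system and invoking lists of primitive groups with a regular subgroup is indeed the natural line of attack, and it is what the paper does in the abelian setting via Theorems~\ref{thm:main} and~\ref{thm:main2}; but those results depend heavily on $A$ being abelian (Frobenius-type arguments, the structure of generalized dihedral normalisers, explicit identification of $\PGL(2,q)$), and there is currently no substitute for general $G$. So this half of your plan is a genuine gap, not a routine completion.

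Even the ``group-automorphism case'' is not quite closed. Your reduction to counting $\langle\varphi,\iota\rangle$-orbits is correct, and the characterisation of groups admitting a nonidentity $\varphi$ with $\varphi(g)\in\{g,g^{-1}\}$ for all $g$ is the right ingredient. But the step ``I would show that $r(\varphi)$ is smaller than this dimension by a constant fraction $\varepsilon n$'' is exactly where the work lies: one needs simultaneous control of $|\mathrm{Fix}(\varphi)|$ and $|\{g:\varphi(g)=g^{-1}\}|$, and turning ``$\varphi$ displaces at least one element'' into ``$\varphi$ displaces a positive proportion'' is nontrivial for general $G$. The paper's Lemma~\ref{lemma:normalizedcounting} carries this out for abelian $A$ by a careful case analysis on the $2$-part of $|\varphi|$ and the interaction with $\iota$; an analogue for arbitrary $G$ would itself be a new result.
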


As in the digraph case, Godsil showed that Conjecture~\ref{graphmainconjecture} holds if $G$ is a $p$-group with no homomorphism onto $\C_p \wr \C_p$~\cite{Go2} while Babai and Godsil verified Conjecture \ref{graphmainconjecture} in the case that $G$ is nilpotent of odd order~\cite[Theorem 2.2]{BaGo}.

If $A$ is abelian of exponent greater than $2$, the preceding observations make it natural to conjecture that ``almost all Cayley graphs of $A$ have automorphism group as small as possible (namely $A\rtimes\langle\iota\rangle$)". This conjecture was made by Babai and Godsil \cite[Remark~4.2]{BaGo}.

\begin{conjecture}\label{graphsecondconjecture}
Let $A$ be an abelian group of order $n$. The proportion of inverse-closed subsets $S$ of $A$ such that $\Aut(\Cay(A,S)) = A\rtimes\langle\iota\rangle$ goes to $1$ as $n\to\infty$.
\end{conjecture}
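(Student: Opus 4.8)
The plan is to bound the number of \emph{bad} inverse-closed subsets, namely those $S$ for which $\Aut(\Cay(A,S))$ strictly contains $R:=A\rtimes\langle\iota\rangle$, and to show this number is negligible compared with the total number $N$ of inverse-closed subsets. Write $G=\Aut(\Cay(A,S))$, identify $A$ with the group of right translations, and let $H$ be the stabiliser in $G$ of the identity vertex $1$, so that $G=AH$ and $\langle\iota\rangle\le H$; since $S$ is exactly the out-neighbourhood of $1$, every element of $H$ preserves $S$ setwise. Thus $\Cay(A,S)$ is good precisely when $H=\langle\iota\rangle$ and bad precisely when $H>\langle\iota\rangle$, in which case $H$ contains an element $\sigma$ of prime order with $\sigma\notin\langle\iota\rangle$. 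The basic counting device is that, writing $c(K)$ for the number of orbits of $K\le\Sym(A)$ on $A$, one has $N=2^{c(\langle\iota\rangle)}$ (the orbits of $\langle\iota\rangle$ being the elements of $A_2:=\{a\in A: a^2=1\}$ together with the inverse-pairs, so $c(\langle\iota\rangle)=(n+|A_2|)/2$), and more generally, for any $K$ with $\langle\iota\rangle\le K\le\Sym(A)_1$, the number of inverse-closed $S$ invariant under $K$ equals $2^{c(K)}$. Since enlarging a group can only merge orbits, passing from $\langle\iota\rangle$ to such a $K$ yields a \emph{saving} $c(\langle\iota\rangle)-c(K)\ge 0$ in the exponent, and the whole problem reduces to covering the bad subsets by a family of groups whose total saving outweighs the number of groups involved.

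First I would treat the case $A\norml G$. Here $H\le\Aut(A)$, so the extra symmetry is a genuine group automorphism $\alpha$ of $A$ with $\alpha\neq 1,\iota$, and the bad subsets in this case lie in $\bigcup_{\alpha}\{S:\alpha(S)=S\}$, the union taken over the \emph{elements} $\alpha\in\Aut(A)\setminus\langle\iota\rangle$. The crucial point is that the fixed points of any $\alpha\in\Aut(A)$ form the subgroup $\ker(\alpha-1)$, so a non-identity $\alpha$ moves at least half of $A$; a short analysis then shows $c(\langle\alpha,\iota\rangle)\le c(\langle\iota\rangle)-\Omega(n)$, a saving linear in $n$, even in the extremal situations where $\alpha$ agrees with $1$ or with $\iota$ on a subgroup of index $2$ (where the support still has size $n/2$). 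Summing over the at most $|\Aut(A)|\le 2^{O((\log n)^2)}$ choices of $\alpha$, the contribution is at most $2^{O((\log n)^2)}\cdot 2^{c(\langle\iota\rangle)-\Omega(n)}=o(2^{c(\langle\iota\rangle)})=o(N)$, since a linear saving dominates a polylogarithmic count. This case is therefore essentially bookkeeping.

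The main obstacle is the case $A\not\norml G$, where $\sigma$ is an arbitrary permutation fixing $1$ and the set of candidates is far too large for a naive union bound: there are up to $n!$ such permutations, and one of support $m$ fixes $2^{c(\langle\iota\rangle)-\Omega(m)}$ inverse-closed subsets, so $\sum_\sigma 2^{c(\langle\iota\rangle)-\Omega(m)}$ diverges hopelessly because $n^m$ swamps the saving $2^{\Omega(m)}$. To overcome this I would replace the sum over individual permutations by the structure theory of transitive groups containing a regular abelian subgroup: analysing the $G$-invariant block systems, one reduces either to a \emph{primitive} component, where the classification of primitive groups with a regular abelian subgroup (resting ultimately on CFSG) pins down the relevant overgroup $M=\langle A,\sigma\rangle$ up to very few possibilities and forces $M$ to have drastically few orbits on $A$, hence an enormous saving; or to a \emph{wreath/Cartesian} decomposition that can be handled inductively on $|A|$. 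The heart of the matter is this structural dichotomy together with the quantitative claim that, once $A$ fails to be normal, the orbit count $c(M_1)$ drops below $c(\langle\iota\rangle)$ by enough to absorb the now tightly controlled number of configurations that must be considered. This is where I expect essentially all the genuine difficulty to reside.

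Finally I would assemble the two cases and dispose of the degenerate regime. When $A$ has exponent at most $2$ we have $\iota=1$ and $R=A$, so the assertion coincides with the DRR/GRR statement and follows from the techniques underlying Theorem~\ref{th:main1}; when the exponent exceeds $2$ the analysis above applies verbatim, the normal case being routine and the non-normal case being the crux. The overall conclusion is that the bad inverse-closed subsets number $o(N)$, so that $\Aut(\Cay(A,S))=A\rtimes\langle\iota\rangle$ for a proportion of inverse-closed $S$ tending to $1$, as required.
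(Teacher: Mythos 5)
Your high-level framework coincides with the paper's Section~\ref{sec:EnumerationDigraphs}: count ``bad'' inverse-closed sets against the total $2^{(n+m)/2}$, split according to whether $A$ is normal in $G=\Aut(\Cay(A,S))$, handle the exponent-$\le 2$ case via the digraph result, and in the normal case union-bound over $\alpha\in\Aut(A)\setminus\{1,\iota\}$ using a linear orbit-count saving --- this last step is essentially Lemma~\ref{lemma:normalizedcounting}, and your sketch of it is sound. The genuine gap is the case $A\not\norml G$, which you yourself flag as containing ``essentially all the genuine difficulty'': there you offer a plan (block systems, induction on $|A|$, the classification of primitive groups with a regular abelian subgroup) rather than an argument. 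This is precisely the content the paper supplies in Sections~\ref{sec:prelim}--\ref{sec:AppDigraph}: if $B=\nor G A=A\rtimes\langle\iota\rangle$ is proper in $G$, then Theorems~\ref{thm:main} and~\ref{thm:main2} (resting on Lemma~\ref{lemma:Fr} and the CFSG-based Proposition~\ref{prop:ch}), through their graph-theoretic consequences Theorems~\ref{thm:graphmain} and~\ref{thm:graphmain2}, show that $\Cay(A,S)$ is either a generalized wreath graph with respect to some $(H,K,A)$ (Definition~\ref{def:semiwreath}), or a direct product of a complete or edgeless graph on a cyclic factor $C$ with a Cayley graph on an elementary abelian $2$-group (the $\PGL(2,q)$ product-action case); these two families are then enumerated directly by Lemmas~\ref{lemma:semiwreathcounting} and~\ref{lemma:strangecounting}.

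Two concrete reasons your sketched route does not close the gap. First, in the imprimitive branch, ``handled inductively on $|A|$'' lacks the quantitative ingredient: merely knowing that $G$ preserves a block system yields no exponential saving on the number of invariant inverse-closed sets, and an induction on blocks/quotients does not control $\Aut(\Gamma)$ of the whole graph. What produces the saving is the much stronger generalized wreath property --- $S\setminus K$ is a union of $H$-cosets for specific $1<H\leq K<A$ --- and extracting $H$ and $K$ from the hypothesis that $A$ is self-normalizing in a suitable intermediate subgroup is exactly the group-theoretic content of Theorem~\ref{thm:main}~(\ref{yog}) and~(\ref{youm}) (the normalizer of $G_1$ and the constancy of $G_1G_s$), which has no counterpart in your outline. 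Second, in the primitive branch, the object the paper must classify is not $\langle A,\sigma\rangle$ acting on vertices but the primitive group induced on the cosets of $B$, whose point-stabilizer is generalized dihedral on $A$ and which admits the factorization $G=G_1B$ with $|G_1\cap B|\le 2$; that is Proposition~\ref{prop:ch}, and its outcome is not just ``few orbits'' but a concrete exceptional family of direct-product Cayley graphs that is enumerated as a third family of bad sets (Lemma~\ref{lemma:strangecounting}) alongside the wreath and normalizer families --- a family your two-case cover never surfaces. So the proposal is a reasonable road map that is consistent with the paper's strategy, but the core structural theorems that make the non-normal case countable are missing, and without them it is not a proof.
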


Babai and Godsil verified Conjecture~\ref{graphsecondconjecture} when $A$ has order congruent to $3\pmod 4$~\cite[Theorem 5.3]{BaGo}.  Additionally, Godsil pointed out that~\cite[Corollary 4.4]{Go2} could be used to show that Conjecture~\ref{graphsecondconjecture} is true if $A$ has odd prime-power order~\cite[Page 253]{Go2}. This fact was actually proved by the first author using different ideas~\cite{Dobson2010b}. A translation of results proven using Schur rings in \cite{EvdokimovP2002, LeungM1996, LeungM1998} into group-theoretic language gives strong constraints on transitive permutation groups containing a regular cyclic subgroup~\cite[Theorem 1.2]{Li2005}. Using this translation, Bhoumik, Morris and the first author recently verified Conjecture~\ref{graphsecondconjecture} for $A$ a cyclic group~\cite{BhoumikDM2013}.  In this paper, we extend these results and prove Conjecture~\ref{graphsecondconjecture}.

\begin{theorem}\label{th:main2}
Let $A$ be an abelian group of order $n$. The proportion of inverse-closed subsets $S$ of $A$ such that $\Aut(\Cay(A,S)) = A\rtimes\langle\iota\rangle$ goes to $1$ as $n\to\infty$.
\end{theorem}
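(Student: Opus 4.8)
The plan is to reduce Theorem~\ref{th:main2} to the digraph result Theorem~\ref{th:main1}. The key observation is that inverse-closed subsets of $A$ are in bijection with a suitable collection of subsets, since an inverse-closed set $S$ is determined by how it intersects each pair $\{a,a^{-1}\}$. I would count inverse-closed sets $S$ for which $\Aut(\Cay(A,S))$ is strictly larger than $A\rtimes\langle\iota\rangle$, and show this count is a vanishing proportion of all inverse-closed sets. A natural first step is to establish the total number of inverse-closed subsets: if $A$ has $i$ involutions and $(n-1-i)/2$ inverse-pairs of non-involutions, then the number of inverse-closed subsets is $2^{1+i+(n-1-i)/2}$, which is roughly $2^{n/2}$. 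So the ``bad'' inverse-closed sets must be shown to number $o(2^{n/2})$.

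The main structural input should be a classification of the possible overgroups of $A\rtimes\langle\iota\rangle$ inside $\Aut(\Cay(A,S))$. Writing $G=\Aut(\Cay(A,S))$, either $G=A\rtimes\langle\iota\rangle$ (the desired case) or $G$ properly contains it. I would stratify the bad sets according to a minimal overgroup structure: for each subgroup $H$ with $A\rtimes\langle\iota\rangle < H \le \Sym(A)$ that can arise as (a subgroup of) an automorphism group, I would bound the number of inverse-closed $S$ fixed by $H$, i.e. the number of inverse-closed $H$-invariant connection sets. Since $H$ properly contains $A\rtimes\langle\iota\rangle$, the point stabilizer $H_1$ is nontrivial and acts on $A\setminus\{1\}$ with orbits; an inverse-closed $S$ invariant under $H$ must be a union of $\langle\iota\rangle$-symmetrized $H_1$-orbits, so the number of such sets is at most $2^{(\text{number of orbits})}$. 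The crux is to show that any nontrivial $H_1$ forces enough orbit-merging that the number of orbits is bounded well below $n/2$, uniformly over the relevant groups, so that summing $2^{(\text{number of orbits})}$ over all candidate overgroups $H$ still yields $o(2^{n/2})$.

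The hard part, I expect, is controlling this sum over overgroups and ensuring the orbit counts are genuinely small. This is exactly where the analogous digraph argument behind Theorem~\ref{th:main1} does the heavy lifting, and I would try to import its group-theoretic machinery: one typically splits into the case where $G$ acts imprimitively, reducing to a block system and a quotient/induced action amenable to induction on $|A|$, and the case where the socle or some transitive constituent forces $G$ to be large in a controlled way (for abelian $A$, results on transitive groups containing a regular abelian subgroup, as cited for the cyclic case, constrain the possibilities sharply). A delicate point special to the graph setting is that $\iota$ is already present, so one must argue about overgroups of $A\rtimes\langle\iota\rangle$ rather than of $A$ alone; in particular the involutions of $A$ and the pairing by $\iota$ interact with the block structure, and one must verify that passing to inverse-closed sets does not merely halve the exponent of the relevant bounds but genuinely preserves the $o(1)$ proportion. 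I would therefore aim to prove a uniform estimate of the form: the number of inverse-closed $S$ with $\Aut(\Cay(A,S)) \ne A\rtimes\langle\iota\rangle$ is at most $2^{n/2 - cn^{\epsilon}}$ for some positive constants $c,\epsilon$, and then divide by the total $2^{(1+o(1))n/2}$ to conclude.
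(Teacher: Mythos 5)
There are two genuine gaps. The first is quantitative: your normalization is wrong, and the uniform estimate you aim for is false. Writing $m$ for the number of elements of order at most $2$ in $A$, your (correct) formula for the number of inverse-closed subsets equals $2^{m/2+n/2}$, and $m$ can be linear in $n$: for $A=\C_4\times\C_2^k$ one has $m=n/2$, so the total is $2^{3n/4}$, not ``roughly $2^{n/2}$''. Worse, for this group the bad sets genuinely outnumber $2^{n/2}$: take $z_0\in\C_2^k\setminus\{1\}$ and $\varphi\in\Aut(A)$ fixing $\C_2^k$ pointwise and sending a generator $a$ of $\C_4$ to $az_0$; then $\varphi\notin\{1,\iota\}$ as a permutation, its orbits together with $\iota$ consist of $n/2$ fixed points (the elements of order at most $2$) and $n/8$ orbits of length $4$, so at least $2^{n/2+n/8}=2^{5n/8}$ inverse-closed sets are $\langle\iota,\varphi\rangle$-invariant, and each satisfies $\Aut(\Cay(A,S))>A\rtimes\langle\iota\rangle$. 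Hence no bound of the form $2^{n/2-cn^{\epsilon}}$ on the number of bad sets can hold; every estimate must be made relative to $2^{m/2+n/2}$ and must track $m$. This is exactly why the paper's Theorem~\ref{th:epsilon2} has the shape $2^{m/2+11n/24+2(\log_2 n)^2+2}$ and why its counting lemmas (e.g.\ Lemma~\ref{lemma:normalizedcounting}) retain the term $m/2$.

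Second, and more fundamentally, the core structural step is missing and cannot be carried out the way you propose. Your plan is a union bound over overgroups $H$ with $A\rtimes\langle\iota\rangle<H\le\Sym(A)$, bounding invariant sets by $2^{\#\mathrm{orbits}}$ for each. The per-group orbit bound is fine, and the paper uses it, but only in the case where $G=\Aut(\Cay(A,S))$ contains an automorphism of $A$ outside $\{1,\iota\}$, i.e.\ where $\nor G A>A\rtimes\langle\iota\rangle$; there the relevant groups are parameterized by $\Aut(A)$, of size at most $2^{(\log_2 n)^2}$, so the sum is controllable. In the complementary case $\nor G A=A\rtimes\langle\iota\rangle<G$ there is no small family of overgroups to sum over: subgroups of $\Sym(A)$ are far too numerous for a union bound, and identifying which ones actually occur is precisely the hard content. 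The paper instead proves a structure theorem (Theorem~\ref{thm:graphmain2}, resting on Theorem~\ref{thm:main2} and Proposition~\ref{prop:ch}): such a $G$ forces the \emph{graph} to be either a generalized wreath graph with respect to subgroups $1<H\leq K<A$ --- so these bad sets are parameterized by the at most $2^{2(\log_2 n)^2}$ pairs of subgroups of $A$, not by overgroups --- or a direct product arising from $G=U\times\Z G$ with $U\cong\PGL(2,q)$ in product action. Your hope that the digraph argument ``does the heavy lifting'' underestimates this step: since $\iota\neq 1$ always lies in $\Aut(\Cay(A,S))$ when $A$ has exponent greater than $2$, the regular subgroup $A$ is never self-normalizing in the graph setting, so the digraph theorem (about self-normalizing regular abelian subgroups) never applies directly; the graph case requires the generalized dihedral analysis, the classification of primitive groups with soluble stabilizers, and produces a new exceptional family (the $\PGL(2,q)$ direct-product graphs) that has no digraph analogue and that your stratification does not anticipate.
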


We stated Theorems~\ref{th:main1} and~\ref{th:main2} in this way for simplicity but, in fact, we prove the following more explicit versions.

\begin{theorem}\label{th:epsilon1}
Let $A$ be an abelian group of order $n$.  Then the number of subsets $S$ such that $\Cay(A,S)$ is not a DRR is at most $2^{3n/4+2(\log_2(n))^2+1}$.
\end{theorem}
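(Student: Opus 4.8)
The plan is to bound the number of "bad" connection sets $S$ for which $\Cay(A,S)$ fails to be a DRR. The key observation is that $\Cay(A,S)$ is a DRR precisely when $\Aut(\Cay(A,S))$ equals the regular subgroup $A$. If $S$ is bad, then there is a nonidentity automorphism fixing the identity vertex, i.e. the stabilizer $\Aut(\Cay(A,S))_1$ is nontrivial. Since this stabilizer is a subgroup of $\Sym(A\setminus\{1\})$ normalizing $A$ in its action on cosets, my first step would be to reduce to counting, for each candidate automorphism $\sigma$ of the group $A$ (or more generally each relevant permutation of $A$ fixing $1$), how many subsets $S$ are \emph{invariant} under $\sigma$, since such invariance is exactly the condition that forces $\sigma$ to lie in $\Aut(\Cay(A,S))$. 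The crucial point is that if $\sigma$ is a permutation with relatively few orbits on $A$, then the number of $\sigma$-invariant subsets is large, whereas if $\sigma$ moves many points the count drops sharply.

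The second step would be to make this orbit-counting precise. If a permutation $\sigma$ of $A$ fixing $1$ has orbit structure with $k$ orbits on $A\setminus\{1\}$, then the number of subsets $S$ invariant under $\sigma$ is $2^{k}$ (one binary choice per orbit, since $S$ must be a union of orbits). So I would aim to show that every relevant nonidentity $\sigma$ has at most roughly $3n/4$ orbits, giving at most $2^{3n/4}$ invariant subsets per $\sigma$. The worst case — the permutation with the most orbits but still nonidentity — is a single transposition, which has $n/2 + \text{(fixed points)}$ orbits and is therefore the main threat; but a transposition alone typically does not extend to an automorphism of a Cayley graph unless it is induced by the group structure. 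The real work is identifying which permutations can actually occur as elements of $\Aut(\Cay(A,S))_1$ and bounding their orbit counts away from $n$.

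The third step, and the heart of the argument, would be to control the sum $\sum_\sigma 2^{(\text{number of orbits of } \sigma)}$ over all candidate permutations $\sigma$. Here I would restrict attention to the normalizer structure: by a theorem in the spirit of those quoted in the introduction (the reductions available for abelian regular subgroups), the stabilizer's induced action is heavily constrained, so I would enumerate the candidate $\sigma$ by their number of fixed points $f$ and the number of nontrivial orbits, grouping them so that the number of permutations with many orbits is small enough that multiplying by $2^{(\text{orbits})}$ and summing stays below $2^{3n/4 + 2(\log_2 n)^2 + 1}$. The polylogarithmic correction term $2(\log_2 n)^2$ strongly suggests that the count of candidate permutations $\sigma$ themselves is bounded by something like $2^{O((\log n)^2)}$ — for instance the number of subgroups or the number of relevant automorphisms of $A$ — so that the dominant contribution is $2^{3n/4}$ from a single worst-case orbit count, inflated by the number of candidates and a factor of $2$ for union bound slack.

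The step I expect to be the main obstacle is establishing the uniform bound that \emph{every} nonidentity permutation $\sigma$ that can arise in a stabilizer has at most about $3n/4$ orbits on $A\setminus\{1\}$ — equivalently, that it genuinely moves a positive fraction of the points in a structured way rather than acting as a near-identity. A single transposition would naively give $2^{n/2+\text{const}}$ invariant sets, which is far below $2^{3n/4}$, so transpositions are harmless; the genuinely delicate case is a permutation that is a product of many $2$-cycles with a large fixed-point set, or an involution of the group $A$ itself with a large $1$-eigenspace. Ruling out that such a permutation with more than $3n/4$ orbits can normalize $A$ and fix $S$ — or showing that if it does then the resulting family of invariant $S$ is offset by the small number of such $\sigma$ — is where the abelian hypothesis and the Schur-ring/CI-type constraints on $A$-containing transitive groups must be invoked, and this bookkeeping over the fixed-point/orbit profile is the part that requires the most care.
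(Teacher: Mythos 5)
There is a genuine gap, and it sits exactly where you locate ``the part that requires the most care.'' Your strategy is a union bound, over candidate non-identity permutations $\sigma$ fixing the vertex $1$, of the number $2^{\#\mathrm{orbits}(\sigma)}$ of $\sigma$-invariant subsets. This works only if the candidate set is small, and that is precisely what fails in the hard case. The paper splits the bad sets $S$ according to the normalizer $\nor G A$ of $A$ in $G=\Aut(\Cay(A,S))$. If $\nor G A>A$, then some element of $\Aut(A)\setminus\{1\}$ preserves $S$, and your orbit-counting goes through verbatim because there are at most $2^{(\log_2 n)^2}$ group automorphisms to union-bound over (this is Lemma~\ref{lemma:normalizeddigraphcounting}, giving $2^{3n/4+(\log_2 n)^2}$). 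But if $\nor G A=A$, the non-trivial stabilizer elements do \emph{not} normalize $A$: they are not group automorphisms, there is no list of $2^{O((\log n)^2)}$ candidates, and summing $2^{\#\mathrm{orbits}(\sigma)}$ over permutations that could conceivably arise diverges --- already the roughly $n^2/2$ transpositions would each contribute $2^{n-1}$. (Note also a computational slip: a transposition has $n-1$ orbits on $A$, not $n/2+O(1)$, so it admits $2^{n-1}$ invariant sets; such near-identity permutations are the main threat, not ``harmless'' as you assert.) The paper's resolution of the self-normalizing case is not a refined bookkeeping over orbit profiles of individual permutations; it is a structural theorem (Theorem~\ref{thm:graphmain}, resting on Theorem~\ref{thm:main} and proved via Frobenius-group arguments, Burnside's normal $p$-complement theorem and coprime action): if $A$ is a proper self-normalizing regular subgroup of $G$, then \emph{every} digraph admitting $G$ is a generalized wreath digraph with respect to some pair of subgroups $1<H\leq K<A$. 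One then counts digraphs rather than permutations: there are at most $2^{2(\log_2 n)^2}$ pairs $(H,K)$, and for each pair at most $2^{3n/4}$ subsets $S$ with $S\setminus K$ a union of $H$-cosets (Lemma~\ref{lemma:semiwreathdigraphcounting}). This also corrects your reading of the error term: the $2(\log_2 n)^2$ in the exponent comes from choosing the two subgroups $H$ and $K$, not from counting candidate automorphisms.

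In short, your proposal correctly disposes of the case where the stabilizer contains a non-trivial group automorphism of $A$, but the complementary case $\nor G A=A$ is the heart of the theorem, and your plan for it --- constrain which permutations can occur in a stabilizer and bound their orbit counts --- cannot be completed in the per-permutation form you set up: the objects that must be enumerated there are subgroup pairs and coset structures attached to the digraph, not individual permutations. Filling the gap requires proving something equivalent to Theorems~\ref{thm:main} and~\ref{thm:graphmain}, which is where essentially all of the group-theoretic work of the paper lies.
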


\begin{theorem}\label{th:epsilon2}
Let $A$ be an abelian group of order $n$ and let $m$ be the number of elements of order at most $2$ of $A$.  Then the number of inverse-closed subsets $S$  with $\Aut(\Cay(A,S))>A\rtimes\langle\iota\rangle$ is at most $2^{m/2+11n/24+2(\log_2(n))^2+2}$.
\end{theorem}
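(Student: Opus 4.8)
The plan is to adapt the proof of Theorem~\ref{th:epsilon1} to inverse-closed connection sets, the extra ingredients being a careful treatment of the automorphism $\iota$ and of the $m$ elements of order at most $2$. I would first reformulate the conclusion in terms of a point stabiliser: since $A\rtimes\langle\iota\rangle$ acts on $\Cay(A,S)$ with $A$ regular and $\langle\iota\rangle$ the stabiliser of the vertex $0$, we have $\Aut(\Cay(A,S))>A\rtimes\langle\iota\rangle$ if and only if the stabiliser of $0$ in $\Aut(\Cay(A,S))$ strictly contains $\langle\iota\rangle$. Hence for every such $S$ there is a permutation $\sigma$ of $A$ fixing $0$, with $\sigma\notin\langle\iota\rangle$ and $\sigma\in\Aut(\Cay(A,S))$; I may take $\sigma$ of prime order $p$, treating separately the case where the stabiliser of $0$ is a $2$-group having $\iota$ as its only involution (there one works with an element of order $4$). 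Because $\sigma$ fixes $0$, it preserves the neighbourhood $S$ of $0$, so $S$ is invariant under $\langle\sigma,\iota\rangle$; consequently the number of inverse-closed $S$ admitting $\sigma$ as an automorphism is exactly $2^{o(\langle\sigma,\iota\rangle)}$, where $o(\langle\sigma,\iota\rangle)$ denotes the number of orbits of $\langle\sigma,\iota\rangle$ on $A$. The overall strategy is then a union bound: the number of bad $S$ is at most $\sum_{\sigma}2^{o(\langle\sigma,\iota\rangle)}$, where $\sigma$ ranges over a set of witnesses that we must make small.

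Next I would estimate the two quantities governing this sum: the orbit count $o(\langle\sigma,\iota\rangle)$ for a fixed $\sigma$, and the number of witnesses that need to be considered. For the first, note that $\iota$ alone has $(n+m)/2$ orbits on $A$ (the $m$ elements of order at most $2$ are fixed, the remaining $n-m$ are paired). Adjoining $\sigma$ can only fuse orbits, and the amount of fusion is controlled below by the size of the support of $\sigma$: a lower bound of the form ``support $\geq cn$'' translates into $o(\langle\sigma,\iota\rangle)\leq m/2+11n/24$. The exponent $11/24$ (rather than the $3/4$ of the digraph case) and the leading term $m/2$ arise from optimising this orbit count in two regimes, according to whether $\sigma$ has order $2$ or odd order, and according to how $\sigma$ acts on the $m$ involutions: at best $\sigma$ pairs these involutions, halving their contribution from $m$ to $m/2$, while on the non-involutions the support bound forces the stated fusion.

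The heart of the argument, and the step I expect to be hardest, is to justify both the support lower bound for the relevant $\sigma$ and a bound on the number of witnesses. A naive union bound fails here: permutations of small support are far too numerous, and each fixes close to $2^{(n+m)/2}$ inverse-closed sets, so one cannot afford to range over all of $\Sym(A)_0$. The resolution, as in Theorem~\ref{th:epsilon1}, is structural. An extra automorphism of small support cannot be arbitrary: being an automorphism of a Cayley graph on the abelian group $A$, it forces $S$ to be a union of cosets of some nontrivial subgroup on part of $A$, that is, it exhibits $\Cay(A,S)$ as a (generalised) wreath decomposition. Connection sets with such coset structure are rare and are counted directly and separately. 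After removing these, the surviving witnesses have large support and, crucially, are pinned down along a subgroup chain of $A$: such a chain has length at most $\log_2 n$, at each level there are at most polynomially many (in $n$) compatible choices for the action of $\sigma$, and multiplying over the levels bounds the number of witnesses by $n^{O(\log_2 n)}=2^{O((\log_2 n)^2)}$. This is the source of the $2(\log_2 n)^2$ term.

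Finally I would assemble the pieces. For the coset-structured (small-support) sets, the direct count gives a contribution comfortably below the target bound. For the remaining sets, the union bound over at most $2^{2(\log_2 n)^2}$ witnesses, each contributing at most $2^{m/2+11n/24}$ invariant inverse-closed sets, yields at most $2^{m/2+11n/24+2(\log_2 n)^2}$; absorbing the two contributions and the prime-order/order-$4$ case distinction into the additive constant gives the claimed $2^{m/2+11n/24+2(\log_2 n)^2+2}$. Dividing by the total number $2^{(n+m)/2}$ of inverse-closed subsets leaves the factor $2^{-n/24+2(\log_2 n)^2+2}\to 0$, recovering Theorem~\ref{th:main2}. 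The main obstacle throughout is the structural dichotomy between small-support witnesses (forcing coset structure) and large-support witnesses (few in number), which is exactly where the abelian hypothesis and the interplay with $\iota$ must be exploited.
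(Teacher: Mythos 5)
Your counting shell matches the paper's: a union bound, an orbit-count showing that each ``witness'' fixes at most $2^{m/2+11n/24}$ inverse-closed sets (this is exactly Lemma~\ref{lemma:normalizedcounting}), a separate direct count of coset-structured sets (Lemma~\ref{lemma:semiwreathcounting}), and a $2^{O((\log_2 n)^2)}$ factor for the number of structures. But the step you yourself flag as the heart of the argument --- the structural dichotomy --- is asserted rather than proved, and the mechanism you propose (support of the witness) is not one that can be justified at this level, nor is it what the paper does. Your claim that the large-support witnesses ``are pinned down along a subgroup chain of $A$'' with ``polynomially many compatible choices at each level'' has no justification: a priori the vertex-stabiliser of $\Aut(\Cay(A,S))$ can contain enormous numbers of permutations that do \emph{not} normalize $A$, and nothing elementary confines these to $2^{O((\log_2 n)^2)}$ candidates. (The subgroup-chain count you describe is precisely the proof that $|\Aut(A)|\leq n^{\log_2 n}$, i.e.\ it applies only to witnesses that are group automorphisms of $A$ --- which is exactly what you cannot assume.) The paper's dichotomy is on the normalizer, not the support: writing $G=\Aut(\Cay(A,S))\geq B=A\rtimes\langle\iota\rangle$, either $\nor G A>B$, in which case $S$ is invariant under some element of $\Aut(A)\setminus\{1,\iota\}$ and Lemma~\ref{lemma:normalizedcounting} applies; or $\nor G A=B<G$, and then Theorem~\ref{thm:graphmain2} --- proved via Theorems~\ref{thm:main} and~\ref{thm:main2} and Proposition~\ref{prop:ch}, which rest on the CFSG-based classification of primitive groups with solvable stabilizers~\cite{LiZ2011} and on maximal factorizations of almost simple groups~\cite{LiebeckPS1990} --- forces the \emph{graph itself} to have one of two explicit forms. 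Witnesses that fail to normalize $A$ are never enumerated at all; they are eliminated structurally. Your proposal treats precisely this hard step as a black box, with a heuristic that does not reflect the actual obstruction.

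There is also a concrete omission: your two-class dichotomy (coset-structured sets versus few witnesses) misses the exceptional family in part~(2) of Theorem~\ref{thm:graphmain2}, namely $A=C\times Z$ with $C\cong\C_{q+1}$, $Z$ elementary abelian of exponent $2$, and $S=S'\times S''$ with $S'\in\{C,\emptyset,\{1\},C\setminus\{1\}\}$, where $\Cay(A,S)$ is a direct product of a complete or empty graph with a Cayley graph on $Z$ and $\Aut(\Cay(A,S))$ contains $\Sym(C)\times\Aut(\Cay(Z,S''))>B$. These bad sets need not be reachable by either of your two classes: for instance $A=\C_4$, $S=A\setminus\{1\}$ gives $K_4$, whose extra automorphisms (transpositions of non-identity vertices) have support $2$ and do not normalize $A$, while $\Aut(\C_4)=\{1,\iota\}$ so no witness in $\Aut(A)\setminus\{1,\iota\}$ exists. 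The paper counts this family by a third, separate estimate (Lemma~\ref{lemma:strangecounting}, bound $2^{n/4+2\log(n)-1}$), and the ``$+2$'' in the exponent of the theorem comes from summing the three classes $2^A_{*ex}\cup 2^A_{*gw}\cup 2^A_{*nor}$, not from your prime-order/order-four reduction. Without the group-theoretic structure theorems you have no way to identify this family, so your union bound does not cover all bad sets and the proof does not close.
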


An analogue of Theorem~\ref{th:main2} for generalised dicyclic groups was recently proved by Morris and the last two authors~\cite{Morris}. These results also provide supporting evidence for two conjectures of Xu.  A Cayley (di)graph $\Gamma$ of $G$ is said to be a {\it normal} Cayley (di)graph of $G$ if the regular representation of $G$ is normal in $\Aut(\Gamma)$.  Xu conjectured that almost all Cayley (di)graphs of $G$ are normal Cayley (di)graphs of $G$ (in the undirected case, there is a known exceptional family of groups which must be excluded). See~\cite[Conjecture 1]{Xu1998} for the precise formulation of these conjectures. In fact, it follows from Lemma~\ref{lemma:normalizeddigraphcounting} that Xu's digraph conjecture is equivalent to Conjecture~\ref{digraphmainconjecture}.  Our results support these conjectures as any Cayley (di)graph on $G$ that has automorphism group as small as possible is a normal Cayley (di)graph of $G$.

\subsection{Structure of the paper}

We now give a brief summary of the rest of the paper. Section~\ref{sec:prelim} contains some preliminary results about permutation groups which are needed for  Section~\ref{sec:abelianregular}.  In Section~\ref{sec:abelianregular}, we prove two theorems about permutation groups $G$ containing an abelian regular subgroup $A$ such that the normalizer $\nor G A$ of $A$ in $G$ is either $A$ (see Theorem~\ref{thm:main}) or $A\rtimes\langle\iota\rangle$ (see Theorem~\ref{thm:main2}) and with $\nor G A$ maximal in $G$. (There is an extra technical condition in the statement of Theorem~\ref{thm:main2}.) In both cases we give a fairly detailed description of the structure of $G$.

In Section~\ref{sec:AppDigraph}, we extend our results about permutation groups from Section~\ref{sec:abelianregular} to prove some structural results about Cayley (di)graph on abelian groups. In loose terms, we show that a Cayley graph over an abelian group $A$ is either a generalized wreath graph (see Definition~\ref{def:semiwreath}), or admits a very specific decomposition as a direct product, or admits a non-trivial automorphism different from $\iota$ normalizing $A$. Consequences of these results (Theorems \ref{thm:graphmain} and \ref{thm:graphmain2}) can be considered generalizations of~\cite[Theorem 1.2]{Li2005} in the more general context of abelian groups.

In Section~\ref{sec:EnumerationDigraphs}, we apply the results from Section~\ref{sec:AppDigraph} to prove Theorems~\ref{th:epsilon1} and ~\ref{th:epsilon2}, which imply Theorems~\ref{th:main1}  and~\ref{th:main2}. Finally, in Section~\ref{sec:unlabelled}, we show that the corresponding version of our results for unlabeled graphs easily follows.

\subsection{A few comments}

In light of Theorem~\ref{thm:main}, we feel that it might be interesting in the future to drop the condition of maximality, that is to study transitive permutation groups containing a self-normalizing abelian regular subgroup (in other words, a regular abelian Carter subgroup). Spurred by this investigation, Jabara and the second author recently proved that these groups are in fact solvable~\cite{JabSpiga}. Together with  Casolo, they also proved an upper bound on the Fitting height of such a group in terms of the Fitting height and the derived length of a point-stabilizer (and some extra mild hypothesis)~\cite{CJS}. We think that a classification (in a very broad sense) of these groups would be quite interesting, although perhaps a little optimistic.

The condition ``$\nor G A=A$'' is very natural in the context of enumeration of Cayley (di)graphs. Indeed, if $A$ is a regular subgroup of a permutation group $G$ and $\nor G A>A$, then $G$ contains an element acting as a non-trivial automorphism on $A$ and upper bounds on the frequency of this occurence can often be obtained (see Lemma~\ref{lemma:normalizeddigraphcounting} for example).

The hypothesis ``$\nor G A=A$'' is thus often a critical one. For example, it was used by Godsil in a crucial step of the proof of~\cite[Theorem~$3.6$]{Go2}, allowing him to use a deep transfer-theoretic result of Yoshida~\cite[Theorem~$4.3$]{Yoshida}. It was also used by Poto\v{c}nik and the second and third authors to enumerate Cayley graphs and GRRs of a fixed valency~\cite{Enumeration}.

\section{Preliminaries}
\label{sec:prelim}

In this section, we prove two results which will be used in Section~\ref{sec:abelianregular}. We could not find a reference for the following result in the form tailored to our needs, thus we include a proof.

\begin{lemma}\label{lemma:Fr}
Let $G$ be a primitive group with an abelian point-stabilizer. Then the socle of $G$ is a regular elementary abelian $p$-group for some prime $p$, and the point-stabilizers of $G$ are cyclic of order coprime to $p$.
\end{lemma}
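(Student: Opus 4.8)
The plan is to analyze the structure of a primitive group $G$ with an abelian point-stabilizer by leveraging the O'Nan–Scott theorem, which classifies the possible socle types of finite primitive groups. The key observation is that an abelian point-stabilizer is a very strong constraint: point-stabilizers in primitive groups are typically large and richly structured, so requiring them to be abelian should eliminate all but the simplest case (affine type). First I would recall that in a primitive group, a point-stabilizer $G_\alpha$ is maximal in $G$, and the socle $N = \soc(G)$ acts transitively. The strategy is to rule out each non-affine O'Nan–Scott type in turn, thereby forcing the socle to be a regular elementary abelian $p$-group.

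**Next I would** handle the case where the socle $N$ is nonabelian. The crucial tool here is a lemma of the form: if $T$ is a nonabelian simple group and $T \leq G_\alpha$ is involved in a point-stabilizer (as happens in the almost simple, diagonal, product, and twisted wreath types), then $G_\alpha$ contains a nonabelian subgroup, contradicting abelianness. More precisely, for the almost simple type, $N = T$ is simple nonabelian and $N_\alpha = N \cap G_\alpha$ is a nontrivial (in fact nonabelian, for $|T|$ large) point-stabilizer of $T$ in its primitive action; for the product-action and wreath types, $G_\alpha$ projects onto large subgroups of the factors. In each such case I would exhibit a nonabelian subgroup inside $G_\alpha$. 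The diagonal type is handled similarly since there $G_\alpha$ involves the full nonabelian simple group acting diagonally. Thus all types with nonabelian socle are excluded, and we are left with $N$ elementary abelian and regular, i.e.\ the affine (HA) type, where $G \leq \mathrm{AGL}(d,p)$ with $N \cong \C_p^d$ and $G_\alpha \leq \GL(d,p)$.

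**It then remains** to extract the two concluding assertions in the affine case. Since $N$ is regular, it is a regular elementary abelian $p$-group as claimed. For the point-stabilizer, $G_\alpha$ is an abelian subgroup of $\GL(d,p)$ acting irreducibly on $\FF_p^d$ (irreducibility follows from primitivity: a nontrivial invariant subspace would yield a system of blocks). An abelian irreducible linear group over a finite field is cyclic and acts as scalar multiplication over the extension field $\FF_{p^d}$; concretely, $G_\alpha$ embeds into the multiplicative group $\FF_{p^d}^\times$, hence is cyclic of order dividing $p^d - 1$, which is coprime to $p$. I would cite the standard fact (Schur's lemma plus finiteness of division algebras over finite fields, or Clifford theory) that an abelian group acting faithfully and irreducibly over $\FF_p$ is cyclic of order coprime to $p$. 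This yields both the cyclicity and the coprimality.

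**The main obstacle** I expect is the bookkeeping in ruling out the nonabelian socle types cleanly and uniformly, since the O'Nan–Scott dichotomy splits into several subcases (almost simple, simple diagonal, compound diagonal, product action, twisted wreath) and each requires locating a nonabelian subgroup of the stabilizer. The cleanest route is probably to argue directly that any transitive nonabelian characteristically simple group $N$ forces $N_\alpha$ (or a section of $G_\alpha$) to contain a nonabelian subgroup, rather than treating the types one by one; for instance, one can note that a nonabelian simple group acting primitively has nonsolvable, hence nonabelian, point-stabilizers once $|T|$ is sufficiently large, and check the finitely many small exceptions by hand. Establishing this uniform statement, and verifying that no small nonabelian simple group slips through with an abelian stabilizer, is where the real care is needed; the affine-case conclusion is then routine linear algebra.
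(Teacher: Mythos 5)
There is a genuine gap, and it sits exactly at the crux of your argument: the claim you use to eliminate the non-affine O'Nan--Scott types. You assert that ``a nonabelian simple group acting primitively has nonsolvable, hence nonabelian, point-stabilizers once $|T|$ is sufficiently large.'' This is false: $\PSL(2,q)$ acting on the projective line has point-stabilizer a Borel subgroup $\FF_q\rtimes \C_{(q-1)/(2,q-1)}$, which is solvable, for arbitrarily large $q$; dihedral (hence solvable) stabilizers occur as well. Indeed, primitive groups with solvable stabilizers are plentiful enough that their classification (Li--Zhang, the very result invoked in the proof of Proposition~\ref{prop:ch}) is a substantial theorem. So nonsolvability cannot be the source of your contradiction. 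What you actually need is the weaker statement that a non-affine primitive group cannot have an \emph{abelian} point-stabilizer, equivalently that a nonsolvable group has no abelian maximal core-free subgroup. That statement is true, but your proposal does not prove it: the fallback of ``checking the finitely many small exceptions by hand'' presupposes a finiteness claim you never establish (in fact there are no exceptions at all, but that is precisely what has to be shown). There are secondary unproved steps as well: in the almost simple case you implicitly need $T\cap G_\alpha\neq 1$ (i.e., that an AS-type primitive group cannot have regular socle), and the product-action and twisted wreath cases are reduced only by handwaving, since there the nonabelianness of $G_\alpha$ again hinges on the unresolved almost simple case.

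The fact underlying your plan is Herstein's theorem: a finite group with an abelian maximal subgroup is solvable. Had you cited it, you would not need O'Nan--Scott at all, since solvable primitive groups are affine by elementary arguments. But note that the standard proof of Herstein's theorem is exactly the argument the paper uses directly: for $g\notin A$, maximality forces $\langle A,A^g\rangle=G$; since $A$ and $A^g$ are abelian, $A\cap A^g$ is centralized by both, hence central in $G$, hence trivial; so $G$ is a Frobenius group with abelian complement $A$. The Frobenius kernel is regular and nilpotent (Thompson), hence elementary abelian by primitivity, and Schur's lemma applied to the irreducible action of $A$ gives cyclicity and coprimality. Your affine-case endgame coincides with this last step and is fine; the problem is everything before it. As written, your route rests on a false assertion, and once repaired it collapses back onto the same elementary Frobenius argument, so the CFSG-strength machinery buys nothing here.
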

\begin{proof}
Let $A$ be the stabilizer of a point in $G$. If $A=1$, then $G$ is a cyclic group of prime order. Suppose that $A>1$. Let $g\in G\setminus A$. By the maximality of $A$ in $G$, it follows that $\langle A,A^g\rangle=G$. Now $A\cap A^g$ is centralized by $A$ and $A^g$ and hence by $G$. It follows that $A\cap A^g=1$. We have shown that $A\cap A^g=1$ for every $g\in G\setminus A$, from which it follows that $G$ is a Frobenius group with complement $A$. Let $N$ be the Frobenius kernel. Observe that $N$ is regular. Since $N$ is nilpotent and $G$ is primitive, it follows that $N$ is elementary abelian. Since $G$ is primitive, $A$ acts irreducibly as a linear group on $N$. From Schur's lemma we deduce that $A$ is cyclic of order coprime to $|N|$.
\end{proof}

We say that a group $B$ is a {\em generalized dihedral group} on $A$, if $A$ is an abelian subgroup of index $2$ in $B$ and there exists an involution $\iota\in B\setminus A$ with $a^\iota=a^{-1}$ for every $a\in A$. Note that, in this case, $a^x=a^{-1}$ for every $a\in A$ and every $x\in B\setminus A$. We denote by $\C_n$ the cyclic group of order $n$ and by $\D_n$ the dihedral group of order $2n$. For terminology regarding the types of primitive groups, we refer to~\cite{ONAN}.

\begin{proposition}\label{prop:ch}
Let $G$ be a primitive group such that a point-stabilizer $B$ is a generalized dihedral group on $A$ and such that $G$ contains a subgroup $L$ with $G=LB$ and $|L\cap B|\leq 2$. Then one of the following holds:
\begin{itemize}
\item $G$ is of affine type,
\item $G\cong\PGL(2,q)$ for some prime power $q\geq 4$, $B\cong \D_{q+1}$, $A\cong\C_{q+1}$, $|B\cap L|=2$ and $G$ in its action on the right cosets of $L$ is $2$-transitive,
\item $G\cong \PGL(2,q)$ for some prime power $q\geq 7$ with $q\equiv 3\pmod 4$, $B\cong \D_{q+1}$, $A\cong \C_{q+1}$ and $|B\cap L|=1$,
\item $G\cong \PSL(2,q)$ for some prime power $q\geq 11$ with $q\equiv 3\pmod 4$, $B\cong \D_{(q+1)/2}$ and $|B\cap L|=1$.
\end{itemize}
\end{proposition}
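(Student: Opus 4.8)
The plan is to combine the O'Nan--Scott theorem with the classification of the finite primitive groups having a soluble point-stabiliser, and then to exploit the factorisation $G=LB$ to isolate the four listed possibilities. First I would record the basic structural facts. Since $A$ is abelian and has index $2$ in $B$, the stabiliser $B$ is metabelian, hence \emph{soluble}; and $G=LB$ with $|L\cap B|\le 2$ means that $L$ acts transitively on the $n:=|G:B|$ cosets of $B$ with point-stabiliser $L\cap B$ of order at most $2$. Thus, in the primitive action of $G$, the subgroup $L$ is either regular or has point-stabilisers of order $2$. I would also observe that the affine case needs no further work: if the socle $N$ is a regular elementary abelian group, then $L=N$ already witnesses the hypothesis on $L$, and this is the first conclusion of the proposition.

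Next I would eliminate all the non-affine O'Nan--Scott types other than almost simple. Since $B$ is soluble, the diagonal and twisted wreath types cannot occur, and in the product-action type the component stabilisers would themselves have to be soluble; here the generalized dihedral hypothesis is essential, as it prevents $B$ from splitting as the sort of direct-product-like stabiliser that a product action requires, while the near-regular subgroup $L$ supplies the complementary arithmetic obstruction. This reduces us to the case where $G$ is almost simple with socle $T$ a nonabelian simple group and $B$ a \emph{maximal} soluble subgroup admitting the factorisation $G=LB$. Appealing to the known maximal factorisations of the almost simple groups (equivalently, to the classification of primitive groups with soluble stabilisers), the near-exactness condition $|L\cap B|\le 2$ forces $T=\PSL(2,q)$: the factorisations of the other simple groups either offer no generalized dihedral factor or cannot be made almost exact.

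With $T=\PSL(2,q)$ in hand, I would use Dickson's description of the subgroups of $\PSL(2,q)$ and $\PGL(2,q)$ to pin down $B$. The generalized dihedral hypothesis rules out the Borel (Frobenius) subgroups and the split-torus normalisers $\D_{q-1}$, leaving the non-split torus normalisers; so $B\cong\D_{q+1}$ with $A\cong\C_{q+1}$ when $G\cong\PGL(2,q)$, and $B\cong\D_{(q+1)/2}$ when $G\cong\PSL(2,q)$. Maximality of $B$ together with Dickson's list forces $q\ge 4$, respectively $q\ge 11$, the latter being exactly the range in which the relevant dihedral subgroup is maximal in $\PSL(2,q)$. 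Comparing orders, $n=q(q-1)/2$ and $|L|\in\{n,2n\}$, so in the $\PGL$-case $L$ is either a Borel subgroup (order $q(q-1)=2n$, whence $|L\cap B|=2$) or an index-$2$ subgroup $E\rtimes\C_{(q-1)/2}$ of it (order $n$, whence $|L\cap B|=1$), while in the $\PSL$-case $L$ must be the full Borel subgroup, of order $q(q-1)/2=n$, so $|L\cap B|=1$.

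The last step is to read off the arithmetic. When $L$ is a Borel subgroup of $\PGL(2,q)$ it is the stabiliser of a point of the projective line, so the action of $G$ on the cosets of $L$ is the natural $2$-transitive action of degree $q+1$; this is the second conclusion, and it imposes no congruence on $q$ since the factorisation need not be exact. In the remaining subcases the factorisation can be made exact, $|L\cap B|=1$, precisely when $(q-1)/2$ is odd, that is when $q\equiv 3\pmod 4$: then $L$ has odd order, and, lying inside a point-stabiliser of the projective line while $B$ normalises a fixed-point-free torus, it meets $B$ trivially. This yields the third and fourth conclusions. The step I expect to be the main obstacle is the elimination in the second paragraph: showing that no simple group other than $\PSL(2,q)$ can carry a generalized dihedral maximal subgroup together with an almost-exact factorisation $G=LB$. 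This is where the full strength of the maximal-factorisations classification (hence of the classification of finite simple groups) is required, and where the interplay between the solubility of $B$, its precise generalized dihedral shape, and the smallness of $|L\cap B|$ must be controlled carefully.
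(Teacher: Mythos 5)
Your outline matches the paper's strategy (reduce via the soluble-stabilizer classification to almost simple or product action type, force $T=\PSL(2,q)$, then analyze), but it contains a step that is simply false and on which the rest hinges. You claim that ``the generalized dihedral hypothesis rules out \dots\ the split-torus normalisers $\D_{q-1}$.'' It does not: a dihedral group \emph{is} a generalized dihedral group (on its cyclic index-$2$ subgroup), so $B\cong\D_{(q-1)/(2,q-1)}$ satisfies the hypothesis on $B$ exactly as well as the non-split case --- indeed, in the exceptional cases $\PGL(2,7)$ (with $B\cong\D_6$) and $\PGL(2,11)$ (with $B\cong\D_{10}$) the split-torus normalizer is even maximal, so these configurations genuinely occur in the classification and must be killed by hand. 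The paper eliminates them using the factorization, not the shape of $B$: for $B_0\cong\D_{(q-1)/(2,q-1)}$ one shows $|T\cap L|\geq q(q+1)/2$, which by the subgroup structure of $\PSL(2,q)$ forces $q$ even and $T\cap L$ a Borel subgroup, whence $q+1$ divides $2(q-1)$, a contradiction; the cases $q=7,11$ need separate arguments ($\PGL(2,7)$ has no subgroup of index $6$ or $12$, and $\PGL(2,11)$ has no subgroup of order $66$). Without some replacement for these arguments your proof collapses precisely at the point you declared immediate.

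There are three further gaps. First, you never exclude field automorphisms: the classification only gives you the socle $T=\PSL(2,q)$, and a priori $G$ could lie anywhere between $T$ and $\mathrm{P}\Gamma\mathrm{L}(2,q)$; the paper must prove $G\leq\PGL(2,q)$ via an explicit computation showing that $\cent{{\mathrm{P}\Gamma\mathrm{L}(2,q)}}{A_0}$ is cyclic of order $q+1$ inside $\PGL(2,q)$ (where $A_0$ is the non-split torus), combined with $B=B_0A$ and $G=TA$. Second, your product-action elimination is a hand-wave, and the role you assign to $L$ there is a misdirection --- the paper's argument does not use $L$ in that case at all. The actual idea is that a generalized dihedral group normalizes \emph{every} subgroup of $A$, so transitivity of $B$ on the components forces the subgroups $B_i\cap A$ to be all equal and hence trivial (they lie in distinct direct factors), giving $|B_i|=2$; then maximality yields $B_i=\nor{T_i}{B_i}$, so $B_i$ is a self-normalizing Sylow $2$-subgroup of order $2$, contradicting Burnside's normal $p$-complement theorem. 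Third, your derivation of the congruence $q\equiv 3\pmod 4$ only gestures at sufficiency (``the factorisation can be made exact when $(q-1)/2$ is odd''), whereas the proposition requires necessity --- that $|B\cap L|=1$ forces $q\equiv 3\pmod 4$ --- which the paper extracts from the Liebeck--Praeger--Saxl tables of maximal factorizations; your parenthetical appeal to those tables earlier does not by itself produce this congruence.
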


\begin{proof}
We assume that $G$ is not of affine type. The finite primitive groups with a solvable point-stabilizer are classified in~\cite{LiZ2011}. From~\cite[Theorem~$1.1$]{LiZ2011} we see that $G$ is of almost simple or product action type.

Suppose that $G$ is of almost simple type. It follows from~\cite[Theorem~$1.1$~(ii)]{LiZ2011} that $G$ contains a normal subgroup $G_0$ which is minimal with respect to the property that $B_0=B\cap G_0$ is maximal in $G_0$ and $|G:G_0|=|B:B_0|$. Moreover, $(G_0,B_0)$ is one of the pairs in~\cite[Tables~14--20]{LiZ2011}. Since $B$ is a generalized dihedral group, $B_0$ is either abelian or a generalized dihedral group. Let $T$ be the socle of $G$. A meticulous analysis of the pairs in~\cite[Tables~14--20]{LiZ2011} shows that $(T,G_0,B_0)$ must be one of the triples in Table~\ref{table}. In particular, $B_0$ is a dihedral group and $|B_0:G_0\cap A|=2$.

\begin{table}[hh]
\begin{tabular}{|c|c|c|c|}\hline
$T$&$G_0$&$B_0$&Comments\\\hline
${^2}B_2(q)$&${^2}B_2(q)$&$\D_{q-1}$&\\
$\PSL(2,q)$&$\PSL(2,q)$&$\D_{(q-1)/(2,q-1)}$&$q\neq 5,7,9,11$\\
$\PSL(2,q)$&$\PSL(2,q)$&$\D_{(q+1)/(2,q-1)}$&$q\neq 7,9$\\
$\PSL(2,7)$&$\PGL(2,7)$&$\D_6$, $\D_8$&\\
$\PSL(2,11)$&$\PGL(2,11)$&$\D_{10}$&\\\hline
\end{tabular}
\caption{}\label{table}
\end{table}
 We consider each line of Table~\ref{table} on  a case-by-case basis.
 Note that $T$ cannot be a Suzuki group ${^2}B_2(q)$ because an almost simple group $G$ with such a socle does not admit a factorization with $G=LB$, $|L\cap B|\leq 2$, and $B\neq 1\neq L$, see~\cite[Theorem~B]{LiebeckPS1990}. Therefore, $T=\PSL(2,q)$ for some prime power $q$.

Suppose that $B_0=\D_{(q-1)/(2,q-1)}$ with $q\neq 5,7,9,11$. Then, according to Table~\ref{table}, $G_0=T$ and $|T\cap A|=(q-1)/(2,q-1)$. It follows from~\cite[Table~$1$]{LiebeckPS1990} that the factorization $G=BL$ gives rise to the factorization $T=(T\cap B)(T\cap L)$. Since $|B\cap L|\leq 2$ and $|T|=q(q^2-1)/(2,q-1)$, we obtain $|T\cap L|=|T||(T\cap B)\cap (T\cap L)|/|T\cap B|\geq q(q+1)/2$. A quick look at the maximal subgroups of $\PSL(2,q)$~(\cite[Theorem~6.17]{Suzuki1982}) reveals that $T$ has a subgroup $T\cap L$ of such large order only when $q=2^\ell$ and $T\cap L$ is a Borel subgroup of $T$, that is, $|T\cap L|=q(q-1)$. Now, $q(q^2-1)=|T|=|(T\cap B)(T\cap L)|$ divides $|T\cap B||T\cap L|=2q(q-1)^2$ and hence $q+1$ divides $2(q-1)$, which is impossible for $q>3$.

Suppose now that $B_0=\D_{(q+1)/(2,q-1)}$ (with $q\neq 7,9$) and hence $G_0=T$. Let $A_0=B_0\cap A$. The group $A_0$ is cyclic of order $(q+1)/(2,q-1)$. In other words, $A_0$ is a maximal non-split torus of $T$.  Let $\lambda$ be a generator of the cyclic group $\mathbb{F}_{q^2}^*$. Now, under the isomorphism $\mathbb{F}_{q}^2\cong \mathbb{F}_{q^2}$, the group $A_0$ corresponds to $\langle\lambda^{(2,q-1)}\rangle/\langle\lambda^{q+1}\rangle$, and $\nor{{\mathrm{P}\Gamma\mathrm{L}(2,q)}}{A_0}$ corresponds to $(\langle\lambda\rangle/\langle\lambda^{q+1}\rangle)\rtimes \langle w,F\rangle$, where $w$ is the generator of the Weyl group acting by $w:\lambda\mapsto \lambda^{-1}$, and where $F$ is the Galois group of $\mathbb{F}_q$ over its ground field. Write $q=p^f$, with $p$ a prime and $f\geq 1$. Thus $F$ is cyclic of order $f$ generated by $\sigma:\lambda\to\lambda^p$. We show that no non-trivial element $w^\varepsilon\sigma^e$ of $\langle w,F\rangle$ centralizes $\langle \lambda^{(2,q-1)}\rangle/\langle\lambda^{q+1}\rangle$. If $\varepsilon(2,q-1)p^e\equiv (2,q-1)\pmod{q+1}$ for some $\varepsilon\in \{-1,1\}$ and $0\leq e<f$, then $q+1=p^f+1$ divides $(2,q-1)(\varepsilon p^e-1)$ and hence $\varepsilon=1$ and $e=0$. This shows that $\cent {{\mathrm{P}\Gamma\mathrm{L}(2,q)}}{A_0}$ is cyclic of order $q+1$ and is contained in $\PGL(2,q)$. As $B=B_0A$, $A$ centralizes $A_0$ and $G=TB=T(B_0A)=(TB_0)A=TA$, we get $G\leq \PGL(2,q)$. If $G=\PGL(2,q)$, then $B=\D_{q+1}$, $A\cong C_{q+1}$ and $|L|\in \{q(q-1)/2,q(q-1)\}$. If $|L|=q(q-1)$, then $L$ is a Borel subgroup of $G$  and hence the action of $G$ on the right cosets of $L$ is permutation equivalent to the action of $G$ on the points of the projective line, which is $2$-transitive, and thus the result follows. If $|L|=q(q-1)/2$, then $|B\cap L|=1$ and $G=BL$ is an exact factorization. It follows from~\cite[Table~$1$]{LiebeckPS1990} that $q\equiv 3\pmod 4$ and the result follows. Suppose now that $G< \PGL(2,q)$: then $q$ is odd, $G=T$, $B=\D_{(q+1)/2}$ and $A=\C_{(q+1)/2}$. As $|B\cap L|\leq 2$, we have  $|L|=q(q-1)$ or $|L|=q(q-1)/2$. Another quick look at the maximal subgroups of $\PSL(2,q)$ again reveals that $L$ is a Borel subgroup of $T$ and hence has order $q(q-1)/2$. In particular, $B\cap L=1$.  As above, it follows from~\cite[Table~$1$]{LiebeckPS1990} that $q\equiv 3\pmod 4$ and the result follows.

Suppose that $G_0=\PGL(2,q)$ and hence, according to Table~\ref{table}, $q\in \{7,11\}$. In this case, $q$ is prime and hence $G=G_0$ and $B=B_0$. Suppose that $q=7$. If $B=\D_8$, then $A=\C_8$. If $B\cap L=1$, then the result follows. If $|B\cap L|=2$, then $|L|=42$ and $L$ is a Borel subgroup of $G$.  In particular, the action of $G$ on the right cosets of $L$ is permutation equivalent to the action of $G$ on the points of the projective line, which is $2$-transitive, and hence the result follows.  If $B=\D_6$, then $B$ has order $12$ and hence $L$ has index $6$ or $12$ in $G$, but $\PGL(2,7)$ does not have a subgroup of index $6$ or $12$. Suppose that $q=11$ and hence $B=\D_{10}$. It follows that $A$ has order $10$.  As $G = LB$ and $|L\cap B|\leq 2$, we have $|G:L|\in \{10,20\}$. If $|G:L|=10$, we may view $L$ as a point-stabilizer of the transitive action of $G$ on the $10$ cosets of $L$.  Since $\Sym(10)$ contains no element of order $11$, every element of order $11$ in $G$ must be contained in the kernel of this action.  This implies that the kernel of this action contains $\PSL(2,11)$, which contradicts the fact that the action is transitive. Thus $|G:L|=20$ and $|L|=66$, but $G$ has no subgroups of order $66$.

Finally, suppose that $G$ is of product action type. In particular $N\norml G\leq G_1\wr \Sym(m)$, with $m\geq 2$, with $G_1$ an almost simple group with socle $T$ and with $N=\soc(G)\cong T^m$. Let $N=T_1\times \cdots \times T_m$ with $T_i\cong T$ for every $i\in \{1,\ldots,m\}$. For every $i\in \{1,\ldots,m\}$, let $B_i=B\cap T_i$. From the structure of primitive groups of product action type~\cite{ONAN}, we have $B\cap N=B_1\times \cdots \times B_m$ with $|B_1|=\cdots =|B_m|>1$. As $B$ is maximal in $G$, we have $G=NB$ and hence $B$ must act transitively on $\{T_1,\ldots,T_m\}$. It follows that $B$ also acts transitively on $\{B_1,\ldots,B_m\}$ and, since $A\unlhd B$, also on $\{(B_1\cap A),\ldots,(B_m\cap A)\}$. However, as $B$ is a generalized dihedral group, $B$ normalizes every subgroup of $A$. Since $m\geq 2$, it follows that $B_1\cap A=\cdots =B_m\cap A=1$.  As $|B:A|\leq 2$, we have $|B_i|=2$ for every $i$ and hence $B\cap N$ is an elementary abelian $2$-group. Since $B\cap N\triangleleft B$ and since $B$ is a maximal subgroup of $G$, we get $B=\nor G{{B\cap N}}$ from which we obtain $B\cap N=\nor N {B\cap N}$. It follows that $B_i=\nor {T_i}{B_i}$. Since $B_i$ is self-normalizing, it is a Sylow $2$-subgroup of $T_i$.  As $\vert B_i\vert = 2$, it follows from Burnside's $p$-complement Theorem (see~\cite[7.2.1]{KurStell} for example) that $T_i$ has a normal $2$-complement, a contradiction.
\end{proof}

\section{Abelian regular subgroups with small normalizers}
\label{sec:abelianregular}
The first result of this section (Theorem~\ref{thm:main}) deals with permutation groups containing a self-normalizing abelian regular subgroup. We start with an example, which will hopefully help the reader to follow the proof of  Theorem~\ref{thm:main}.

\begin{example}\label{ex1}{\rm
Let $p$ be a prime, let $S$ be an abelian group and let $W$ be a non-trivial irreducible $\mathbb{F}_pS$-module over the field $\mathbb{F}_p$ of order $p$. Let $Q$ be a non-trivial abelian $p$-group, let $P=
W\times Q$ and let $S$ act on $P$ as a group of automorphisms by centralizing $Q$. Let $A=Q\times S$ and $G=P\rtimes S$.

Fix $q$ an element of $Q$ of order $p$ and let $w_1,\ldots,w_\ell$ be a basis of $W$ as an $\mathbb{F}_p$-vector space. Let $G_1=\langle qw_1,\ldots,qw_\ell\rangle$ and let $\Omega$ be the set of right cosets of $G_1$ in $G$. Clearly, $P=G_1\times Q$, $G=G_1A$ and $G_1\cap A=1$. In particular, the abelian group $A$ acts regularly on $\Omega$.

Let $w\in \nor W A$. For every $a\in A$, we have $a^{w}\in A$. Since $a^w=w^{-1}aw=w^{-1}w^{a^{-1}}a$, we get $w^{-1}w^{a^{-1}}\in W\cap A=1$ and hence $a$ centralizes $w$. Therefore $w$ is centralized by every element of $S$. Since $W$ is an irreducible $\mathbb{F}_pS$-module, it follows that $w=1$ and hence $\nor W A=1$. Since $G=WA$, it follows that $\nor G A =A$.

Finally, let $K$ be the kernel of the action of $G$ on $\Omega$. Then $K\leq G_1$ and, since $W$ is an irreducible $S$-module and since $W\nleq G_1$, we have $W\cap K=1$. As $G_1\cap Q=1$, we also have $Q\cap K=1$. This gives $K=1$ because from Maschke's theorem every irreducible $\mathbb{F}_pS$-submodule of $P$ is contained in either $Q$ or $W$. This shows that $G$ acts faithfully on $\Omega$.
}
\end{example}

Loosely speaking, Theorem~\ref{thm:main} shows that the groups in Example~\ref{ex1} are the building blocks of every permutation group having a self-normalizing abelian regular subgroup.

\begin{theorem}\label{thm:main}
Let $G$ be a permutation group with a maximal abelian regular subgroup $A$ such that $\nor G A = A$. Let $G_1$ be the stabilizer of the point $1$, let $N$ be the core of $A$ in $G$. Then there exist a prime $p$ and $Q$ and $S$ with $Q\neq 1\neq S$ such that
\begin{enumerate}
\item $A/N$ is cyclic of order coprime to $p$, \label{youb}
\item $G_1$ is an elementary abelian $p$-group, \label{yo}
\item $G/N\cong G_1N/N \rtimes A/N$ acts faithfully as an affine primitive group on the cosets of $A$ in $G$, \label{yam}
\item $N=\Z G=Q\times \cent S {G_1}$, \label{yi}
\item $G=(G_1\times Q)\rtimes S$,  \label{ya}
\item $A=Q\times S$, \label{ye}
\item $G_1\times Q$ is the unique Sylow $p$-subgroup of $G$, \label{yu}
\item  $\nor G {G_1}=\cent G {G_1}=G_1\times N$, \label{yog}
\item for all $s,s'\in G\setminus \nor G {G_1}$, we have $G_1G_s=G_1G_{s'}$. \label{youm}
\end{enumerate}

\end{theorem}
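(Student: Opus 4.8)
The plan is to first pass to the primitive quotient $G/N$ and then reconstruct $G$ from its affine structure. Since $A=\nor G A$ is maximal in $G$, the group $G/N$ acts faithfully and primitively on the right cosets of $A$, with point stabilizer the abelian group $A/N$. Applying Lemma~\ref{lemma:Fr} to this action, the socle $V/N$ (where $V$ is its preimage in $G$) is a regular elementary abelian $p$-group and $A/N$ is cyclic of order coprime to $p$; this yields~\eqref{youb}, and since $G/N=V/N\rtimes A/N$ is then affine primitive with $A/N$ acting irreducibly, also~\eqref{yam}. The point to carry forward is that $A/N$ acts \emph{fixed-point-freely} on $V/N$, so $G/N$ is a Frobenius group.

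Next I would pin down the arithmetic and Sylow structure. One shows $N=\Z G$: the normal subgroup $\cent G N$ contains $A$ and cannot equal $A$ (otherwise $A\norml G$, contradicting $\nor G A=A$), so $\cent G N=G$ and $N\le\Z G$; conversely $\Z G\le\cent G A\le\nor G A=A$ forces $\Z G\le N$. Writing $A=Q\times S$ with $Q$ the Sylow $p$-subgroup and $S$ the Hall $p'$-subgroup, we get $Q\le N$ since $A/N$ is a $p'$-group. As $V/N$ is regular, $|G_1|=|G:A|=|V/N|$ is a power of $p$, so $G_1$ is a $p$-group; moreover $G_1\cap N\le G_1\cap\Z G=1$, whence $G_1N=V$ and $G_1\cong V/N$ is elementary abelian, giving~\eqref{yo}. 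Letting $P$ be the unique (normal) Sylow $p$-subgroup of $G$, a short order count gives $P=G_1\times Q$, $V=P\times(N\cap S)$ and $G=P\rtimes S$, which are~\eqref{yu},~\eqref{ya}; identifying $N\cap S=\cent S{G_1}$ then yields~\eqref{yi} and~\eqref{ye}. Finally $G_1\neq 1$ forces $Q\neq 1$ (else $P=G_1\norml G$, impossible for a point stabilizer) and $S\neq 1$ (else $A=N\norml G$).

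The heart of the argument is~\eqref{yog}. Set $W=[P,S]$. Since $S$ is a $p'$-group acting on the abelian $p$-group $P$ and acts fixed-point-freely on $P/Q\cong V/N$, coprime action gives $\cent P S=Q$ and $P=W\times Q$, with $W\cong V/N$ a nontrivial irreducible $S$-module and $W\norml G$. Then $G_1$, being a complement to $Q$ in $P$, is the graph of a homomorphism $\phi\colon W\to Q$, and $\phi\neq 0$ because $G_1\neq W$ (indeed $W\norml G$ while the point stabilizer $G_1\neq 1$ is not normal). By Schur's lemma $W$ is one-dimensional over the field $\FF=\mathrm{End}_S(W)$ and $S$ acts by scalars; writing $\sigma_s$ for the automorphism of $W$ induced by $s$, the condition that $s$ normalise $G_1$ becomes $\phi\sigma_s=\phi$, and for $\sigma_s\neq 1$ the map $1-\sigma_s$ is invertible, so this forces $\phi=0$, a contradiction. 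Hence only the elements of $\cent S{G_1}$ normalise $G_1$, giving $\nor A{G_1}=N$ and then $\nor G{G_1}=V=\cent G{G_1}=G_1\times N$. I expect this normalizer computation to be the main obstacle, since it is precisely here that primitivity (the cyclicity of $A/N$), the self-normalizing hypothesis, and the non-normality of $G_1$ all have to be combined through the distinguished complement $W$.

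Statement~\eqref{youm} should then follow quickly. For $s\in G\setminus\nor G{G_1}=G\setminus V$, its $S$-component $s_0$ satisfies $\sigma_{s_0}\neq 1$, and $G_1^{s}=G_1^{s_0}$ is the graph of $\phi\sigma_{s_0}^{-1}$. Computing the product inside $P=W\times Q$, the subgroup $G_1G_1^{s}$ surjects onto $W$ and meets $Q$ in $\mathrm{im}\bigl(\phi(1-\sigma_{s_0}^{-1})\bigr)=\mathrm{im}(\phi)$, the last equality holding because $1-\sigma_{s_0}^{-1}$ is invertible. Thus $G_1G_s=W\times\mathrm{im}(\phi)$, which is independent of the choice of $s$, as required.
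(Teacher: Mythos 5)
Your proposal is correct in substance, and for the harder items (\ref{yu}), (\ref{yog}) and (\ref{youm}) it takes a genuinely different route from the paper. Through items (\ref{youb})--(\ref{yam}) you follow the same path as the paper (pass to $G/N$ and apply Lemma~\ref{lemma:Fr}), and your derivation of $N=\Z G$ via the normal subgroup $\cent G N$ is a harmless variant of the paper's argument with $\langle A,A^g\rangle=G$. The divergence begins with the Sylow structure: the paper obtains the normal Sylow $p$-subgroup by invoking Burnside's normal $p$-complement theorem once for each prime $r\neq p$, whereas your setup yields it almost for free, since $V=G_1N=G_1\times N$ is an \emph{abelian} normal subgroup of $G$, so its Sylow $p$-subgroup $G_1\times Q$ is characteristic in $V$, hence normal in $G$, and the order count $|G|_p=|G_1||Q|$ shows it is a full Sylow $p$-subgroup. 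For (\ref{yog}) and (\ref{youm}) the paper argues with commutators ($T=\nor G {G_1}$ is normal because $G/\cent G{G_1}$ is abelian, $[T,G_1]\leq G_1$ is normal and $G_1$ is core-free) and then with coprime action of a generator of the cyclic group $G/T$; you instead decompose $P=W\times Q$ with $W=[P,S]$ a nontrivial irreducible $S$-module, realize $G_1$ as the graph of a nonzero homomorphism $\phi\colon W\to Q$, and use Schur's lemma to make $S$ act by scalars of a field, so that $1-\sigma_s$ is invertible whenever $\sigma_s\neq 1$. In effect you prove that every group satisfying the hypotheses has exactly the shape of Example~\ref{ex1}, and this buys more than the paper's argument does: you get the explicit value $G_1G_s=W\times\mathrm{im}(\phi)$, not merely its independence of $s$.

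Two steps are asserted where they need short justifications, and you should supply them to avoid circularity. First, you write ``letting $P$ be the unique (normal) Sylow $p$-subgroup of $G$'' before anything guarantees that such a subgroup exists; in the paper this is a \emph{conclusion} (item (\ref{yu})), bought with the Burnside argument, so you must derive it --- for instance by the characteristic-subgroup argument sketched above, all of whose ingredients ($V\norml G$ abelian, $G_1\cap N=1$, $Q\leq N$) you have already established --- rather than invoke it. Second, the fixed-point-freeness of $A/N$ on $V/N$ is not part of the statement of Lemma~\ref{lemma:Fr}; it follows from the Frobenius structure established inside its proof, or, more economically, you can avoid it altogether: to get $\cent P S=Q$ it suffices that $\cent {P/Q} S$ is a proper $S$-submodule of the irreducible module $P/Q\cong V/N$ (proper because $\cent{P/Q}S=P/Q$ would force $S$ to centralize $P\geq G_1$ by coprimality, whence $A=QS\leq N$, a contradiction). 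Both fixes are one-liners with tools you already have, so neither is a genuine obstruction; but as written these two points are gaps in the logical order of the argument.
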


\begin{proof}
Write $\overline{G}=G/N$. (We adopt the ``bar'' convention and denote the group $XN/N$ by $\overline{X}$.) Note that since $A$ is not normal in $G$, we have $N<A$.

The group $\overline{G}$ acts faithfully as a primitive group on the cosets of $A$ in $G$ and the stabilizer $\overline{A}$ of the coset $A$ is abelian. Since $\overline{G}$ is primitive, we have that either $\Z {\overline{G}}=1$ or $|\overline{G}|$ is prime. The latter case contradicts the fact that $N<A<G$, and hence $\Z {\overline{G}}=1$. By Lemma~\ref{lemma:Fr}, there exists a prime $p$ such that $\soc(\overline{G})$ is an elementary abelian $p$-group and $\overline{A}$ is cyclic of order coprime to $p$. (This shows~(\ref{youb}).) Let $P$ be a Sylow $p$-subgroup of $G$ and note that $\overline{P}=\soc(\overline{G})$.

Note that $G=AG_1$ and that $A\cap G_1=1$. It follows that $N\cap G_1=1$ and hence $\overline{G_1}\cong G_1$ and $|\overline{G_1}|=|G_1|=|G:A|=|\overline{G}:\overline{A}|=|\overline{P}|$. Since $\overline{P}$ is the unique Sylow $p$-subgroup of $\overline{G}$, it follows that $\overline{G_1}=\overline{P}$ and $G_1$ is an elementary abelian $p$-group. (This shows~(\ref{yo}) and~(\ref{yam}).)

Let $g\in G\setminus A$. As $A$ is maximal in $G$ and $A=\nor G A$, we have that $G=\langle A ,A^g\rangle$. Since $N\leq A$ and $N\leq A^g$, we see that $A$ and $A^g$ centralize $N$ and hence $N\leq \Z G$. Since $\Z{\overline{G}}=1$ it follows that $N=\Z G$. (This shows the first equality in~\eqref{yi}.) Since $\overline{G}$ and $N$ are solvable, so is $G$.

Let $r$ be a prime divisor of $|G|$ different from $p$ and let $R$ be a Sylow $r$-subgroup of $G$ contained in $A$. If $\overline{R}\neq 1$ then, since $\overline{R}$ acts faithfully as a group of automorphisms on $\overline{G_1}$ and since $\overline{R}\cap\overline{G_1}=1$, we obtain $\nor {\overline{G}}{\overline{R}}\leq\overline{A}$. Since $\overline{A}$ is abelian, it follows that $\nor G R=A$ and hence $\nor G R=\cent G R$. From Burnside's normal $p$-complement theorem~\cite[Theorem~$5.13$]{Isaacs}, we see that $G=X\rtimes R$ for some Hall $r'$-subgroup $X$ of $G$.  If $\overline{R}=1$, then $R\leq N=\Z G$ and $R$ is central in $G$, and hence $G=X\times R$ for some Hall $r'$-subgroup $X$ of $G$.

Repeating the  argument in the previous paragraph for each prime divisor $r$ of $|G|$ different from $p$, we see that $G=P\rtimes S$, where $S$ is a Hall $p'$-subgroup of $G$. In particular, $P\unlhd G$. Moreover, as the Hall $p'$-subgroups are conjugate, we may choose the complement $S$ of $P$ in $G$ with $S\leq A$.

Let $Q=P\cap N$. Observe that $G_1\leq P$ because $P$ is a normal Sylow $p$-subgroup and $G_1$ is a $p$-group.   Since $p$ is coprime to $|\overline{A}|$ and $N\norml G$, we see that $P\cap A=Q$. Therefore,
$$P=P\cap G=P\cap G_1A=G_1(P\cap A)=G_1Q=G_1\times Q$$
where the last equality follows because $N=\Z G$. (This shows~(\ref{ya}),~(\ref{ye}) and~(\ref{yu}).) Note that this implies that $Q\not = 1$ as otherwise $G_1 = P\norml G$, which is not the case.  In particular, this shows that $P$ is abelian. Finally, note that $\cent A {G_1}=\Z G=N$ and hence $\cent S {G_1}=S\cap N$. Therefore,
$$N=A\cap N=(Q\times S)\cap N=Q\times (S\cap N)=Q\times \cent S {G_1}.$$
(This shows the second equality in~(\ref{yi}).)

Clearly, $\cent G {G_1}=G_1\times N$. We now show that $\nor G {G_1}=\cent G {G_1}$. Let $T=\nor G {G_1}$.  Since $P\leq \cent G {G_1}$, we see that $G/{\cent G{G_1}}$ is abelian and hence $T$ is normal in $G$. Now, $[T,P]=[T,G_1\times Q]=[T,G_1]$ since $Q\leq \Z G$. Moreover, $[T,P]$ is normal in $G$ because both $T$ and $P$ are. Since
$$[T,P]=[T,G_1]=[\nor G{G_1},G_1]\leq G_1$$
 and $G_1$ is core-free in $G$, we get $[T,G_1]=1$ and $T$ centralizes $G_1$, that is, $\nor G {G_1}=\cent G {G_1}$. (This shows~(\ref{yog}).) It follows that

$$\frac{G}{T} =  \frac{(P\rtimes S)}{P\times \cent S {G_1}} \cong \frac{S}{\cent S {G_1}}= \frac{S}{S\cap N}\cong \overline{S}\leq\overline{A}.$$

Recall that $\overline{A}$ is cyclic and hence so is $G/T$. Let $aT$ be a generator of $G/T$. Recall that $P=Q\times G_1$ and $Q\leq\Z G$; hence $[P,a]=[G_1,a]$ and $\cent P {a}=Q\times \cent {G_1} {a}$. Since $a$ acts irreducibly on $P/Q\cong\overline{G_1}\cong G_1$, it follows that $\cent {G_1} {a}=1$ and hence $\cent P {a}=Q$. Since $|aT|$ is coprime to $p$, we obtain from the coprime group action~\cite[8.4.2]{KurStell} that $P=[P,a]\times \cent P {a}=[G_1,a]\times Q$.

Similarly, for every $b\in \langle a\rangle$, we have $P=[P,b]\times \cent P {b}=[G_1,b]\times Q\times \cent {G_1} {b}$. Now, suppose $\cent {G_1} {b}>1$. Since $\langle b\rangle\norml\langle a\rangle$ and $a$ acts irreducibly on $\overline{G_1}$, we must have $\cent {G_1} {b}=G_1$ and $b\in\cent {G} {G_1}=T$.

We conclude that for every $b\in \langle a\rangle\setminus T$, we have $P=[G_1,b]\times Q$. Since $b$ is a power of $a$, we have $[G_1,b]\leq [G_1,a]$ and hence $[G_1,b]=[G_1,a]$. It follows that for every $s\in G\setminus T$, we have $[G_1,s]=[G_1,a]$ and hence
$$G_1G_s=G_1G_1^s=G_1[G_1,s]=G_1[G_1,a]=G_1G_{a}.$$
(This shows~(\ref{youm}).)
\end{proof}

Theorem~\ref{thm:main} is sufficient for the enumeration of Cayley digraphs on abelian groups. The corresponding result to enumerate Cayley graphs on abelian groups is Theorem~\ref{thm:main2}. Part of the hypothesis in the statement of Theorem~\ref{thm:main2} is somewhat technical, but this yields a conclusion that is easy to use and strong enough for our applications.

\begin{theorem}\label{thm:main2}
Let $G$ be a permutation group with an abelian regular subgroup $A$. Suppose that $\nor G A $ is generalised dihedral on $A$ and that $\nor G A $ is the unique group with the property that $A<\nor G A<G$. Then $\Z G$ is an elementary abelian $2$-group contained in $A$ and  $G=U\times \Z G$ where $G_1\leq U\cong\PGL(2,q)$ for some prime power $q\geq 3$, $A/ \Z G\cong\C_{q+1}$ and $U$ acts $2$-transitively on $U/G_1$. In particular, $G$ is endowed with the natural product action on $U/G_1\times \Z G$.
\end{theorem}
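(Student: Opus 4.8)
The plan is to reduce the statement to the classification in Proposition~\ref{prop:ch}. Two structural facts will drive everything. First, since $A$ is abelian and regular it is self-centralising, so $\cent{G}{A}=A$ and hence $\Z G\le A$; as the inverting involution $\iota\in\nor G A$ centralises $\Z G$ while inverting $A$, every $z\in\Z G$ satisfies $z=z^{-1}$, so $\Z G$ is an elementary abelian $2$-group contained in $A$. This is already the first assertion. Second, the uniqueness hypothesis says $B:=\nor G A$ is the only subgroup with $A<B<G$; in particular $B$ is maximal, so $G$ acts primitively on the right cosets of $B$ with kernel $K:=\mathrm{core}_G(B)$, and $A$ is maximal in $B$ because $|B:A|=2$.

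Next I would produce the factorisation required by Proposition~\ref{prop:ch}. Transitivity of $A$ gives $G=AG_1=G_1B$, and since every element of $B\setminus A$ is an involution while $G_1\cap A=1$, any subgroup of $B$ meeting $A$ trivially has order at most $2$, so $|G_1\cap B|\le 2$. A counting argument pins this down: an exact factorisation would give $|G|=|G_1|\,|B|=2|A|\,|G_1|$, contradicting $|G|=|A|\,|G_1|$; hence $|G_1\cap B|=2$. I would then split according to the core: either $K\le A$ or $KA=B$. In the second case a short commutator computation gives $A^2\le K$, so $\overline B=B/K$ is abelian and Lemma~\ref{lemma:Fr} forces the cyclic point stabiliser $\overline A$ to be elementary abelian of exponent $2$, hence of order at most $2$; this pushes $\overline G=G/K$ into the affine dihedral situation, which I will treat separately below.

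So assume $K\le A$. Then $\overline B$ is genuinely generalised dihedral (non-abelian, else $A=K\norml G$, impossible), and because $G_1\cap K\le G_1\cap A=1$ the factorisation descends with $|\overline{G_1}\cap\overline B|=2$, excluding the two exact-factorisation alternatives of Proposition~\ref{prop:ch}. As the affine possibility is incompatible here (a short argument using the regular action of $A$), Proposition~\ref{prop:ch} yields $\overline G\cong\PGL(2,q)$ with $q\ge 4$, $\overline A\cong\C_{q+1}$, and $\overline{G_1}\cong G_1$ a Borel subgroup acting $2$-transitively on the $q+1$ cosets. Moreover $K$ is now central: $K\le A$ is centralised by $A$, so $\cent G K$ is a normal subgroup containing $A$, and since neither $A$ nor $B$ is normal in $G$ (indeed $\nor G A=B<G$, while the image of $B$ is a maximal, hence non-normal, subgroup of the primitive group $\overline G$), the subgroup lattice forces $\cent G K=G$, whence $K=\Z G$. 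The residual affine case ($KA=B$) is where the exceptional isomorphism $\PGL(2,3)\cong\Sym(4)$ enters (via $\overline G\cong\Sym(3)$, giving $q=3$); using that $A$ is self-centralising one checks that the larger dihedral quotients $\D_p$ do not occur, by showing the characteristic abelian part of the core would otherwise be central, contradicting its size.

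The principal obstacle is the final step: upgrading $G/\Z G\cong\PGL(2,q)$ to a \emph{direct} product $G=U\times\Z G$ with $G_1\le U\cong\PGL(2,q)$. A central extension of $\PGL(2,q)$ by an elementary abelian $2$-group need not split in general, so this must use the hypotheses essentially. The key is once more that $B$ is generalised dihedral: every element of $B\setminus A$ is an honest involution, so the reflections of $\overline B\cong\D_{q+1}$ lift to honest involutions of $G$, and some of these lie in $\overline{[G,G]}=\PSL(2,q)$. Their lifts being honest involutions is incompatible with a non-split $\C_2$-cover such as $\SL(2,q)$, in which every preimage of a non-central involution has order $4$; iterating this over $\Z G$ forces $[G,G]\cong\PSL(2,q)$ with $[G,G]\cap\Z G=1$. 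Using the honest involutions to extend the section $G_1$, one obtains $U\cong\PGL(2,q)$ with $G_1\le U$, $U\cap\Z G=1$ and $U\Z G=G$; as $\Z G$ is central this is a direct product. Finally $\Z G$ acts semiregularly on the vertex set with the $q+1$ blocks permuted $2$-transitively by $U$, identifying the action with the natural product action on $U/G_1\times\Z G$. I expect this splitting, together with the careful elimination of the spurious affine quotients, to be the most delicate parts of the argument.
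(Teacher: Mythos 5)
Your skeleton matches the paper's proof in outline (dichotomy on the core $K$ of $B=\nor G A$, identification $K=\Z G$, Proposition~\ref{prop:ch} applied to $G/K$ with $L=\overline{G_1}$, then a splitting argument over $\Z G$ using honest involutions against the order-$4$ preimages of involutions in $\SL(2,q)$), and several steps are correct: the first assertion about $\Z G$, the counting $|G_1\cap B|=2$ which kills the exact-factorisation alternatives of Proposition~\ref{prop:ch}, and your derivation of $K=\Z G$ when $K\leq A$ via $\cent G K$ and the subgroup lattice (a slightly cleaner route than the paper's, which instead proves $A\cap A^g=\Z G$ for all $g\notin B$). However, your treatment of the case $KA=B$ --- which is precisely the branch producing $q=3$ --- is not a proof, and this is a genuine gap. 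First, you never exclude $B\norml G$ (then $K=B$, $\overline G\cong\C_p$, and your claimed dichotomy between $\D_p$ and $\Sym(3)$ does not even have the right form); the paper needs a separate argument for this, using the characteristic subgroup of $B$ generated by elements of order greater than $2$. Second, eliminating $\D_p$ for $p>3$ requires knowing the core is a $2$-group with $|K:\Z G|\leq 4$ (in the paper this comes from $A\cap A^g=\Z G$), and then the key move: if the order-$p$ element centralised $K/\Z G$, one builds an intermediate subgroup $R=QA$ with $A<R<G$ and $R\notin\{A,B\}$, contradicting the uniqueness hypothesis; non-centrality then forces $K/\Z G\cong\C_2\times\C_2$ and $p=3$. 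Your phrase about ``the characteristic abelian part of the core being central, contradicting its size'' does not reconstruct any of this. Third, and most seriously: even granting $T/\Z G\cong\Alt(4)$, you still must prove $G=U\times\Z G$ with $U\cong\Sym(4)$, and your splitting tool is useless here, because $\PSL(2,3)\cong\Alt(4)$ is solvable, not perfect, so the last term of the derived series and the universal central extension of $\PSL(2,q)$ give nothing. The paper handles this with a hands-on argument: the order-$3$ element acts transitively on the non-trivial elements of $K/\Z G$, so every coset of $\Z G$ in $K$ contains an involution, whence $K$, then $T$, then $G$ split over $\Z G$.

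The second gap is the final requirement $G_1\leq U$. In the main case ($K\leq A$, socle not elementary abelian), producing \emph{some} complement $U$ with $G=U\times\Z G$ is fine: your $\SL(2,q)$-cover obstruction is essentially the paper's argument (though you should say why the only perfect central extensions available are $\PSL(2,q)$ and its double cover, i.e.\ invoke the Schur multiplier; ``iterating over $\Z G$'' is not the right frame). But for $q$ odd a complement need not contain $G_1$, and ``using the honest involutions to extend the section $G_1$'' is not a mechanism: adjoining involutions to $G_1$ gives no control on the intersection with $\Z G$. The paper's fix is a separate argument: for any complement $U$ one has $G_1^2\leq U$ (squares of $uz$ equal $u^2$), so if $G_1\nleq U$ then $q$ is odd and $G_1\cap U=G_1^2=G_1\cap[U,U]$, and one replaces $U$ by $\langle g\rangle[U,U]\cong U$ for $g\in G_1\setminus U$. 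This replacement step is genuinely needed and entirely missing from your proposal. A smaller but real issue of the same kind: your exclusion of the affine alternative of Proposition~\ref{prop:ch} ``using the regular action of $A$'' is not the actual reason; what excludes it is again the uniqueness hypothesis, via the intermediate subgroup $R=AT$ (with $T$ the preimage of the socle), satisfying $A<R<G$ and $R\neq B$.
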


\begin{proof}
Let $B=\nor G A $. Since $A$ is a transitive abelian group, it follows that it is self-centralizing and hence $\Z G\leq A$. In particular, since $B$ does not centralize  $A$, $A$ is not an elementary abelian $2$-group. Let $\iota\in B\setminus A$. Then $\iota$ acts by inversion on $\Z G$ and hence $\Z G$ is an elementary abelian $2$-group.

As $B$ is maximal in $G$ and $B=\nor G A $, for $g\in G\setminus B$, we have $A< \langle A,A^g\rangle$ and hence either $\langle A,A^g\rangle=B$ or $\langle A,A^g\rangle=G$.

Suppose $\langle A,A^g\rangle=B$ for some $g\in G\setminus B$.  As $|B:A^g|=2$, we have $A^g\norml B$ and hence $B\leq \nor G {A^g}=(\nor G A)^g=B^g$, which gives $B=B^g$ and $g\in \nor G{B}$. Since $g\notin B$ and $B$ is maximal in $G$, it follows that $B\norml G$. Let $K$ be the group generated by elements of $B$ of order different from $2$. Clearly, $K$ is characteristic in $B$ and hence normal in $G$. Since all the elements in $B\setminus A$ have order $2$, $K\leq A$ and hence $K\leq A^g$. Let $x\in A^g\setminus A$. Since $A$ is not an elementary abelian $2$-group, $K\neq 1$ and there is an element $k\in K$ such that $k^2\neq 1$. Since $A^g\leq B$, we have $x\in B\setminus A$ and hence $x$ does not commute with $k$. This contradicts the fact that $A^g$ is abelian.

We may thus assume that $\langle A,A^g\rangle=G$, for every $g\in G\setminus B$. It follows that  $A\cap A^g\leq \Z G$, for $g\in G\setminus B$. Recall that $\Z G\leq A$ and hence $\Z G=A\cap A^g$ for every $g\in G\setminus B$.

Let $N$ be the core of $B$ in $G$.  Let $\overline{G}=G/N$. (Again, we adopt the ``bar'' convention and denote the group $XN/N$ by $\overline{X}$.) The action of $\overline{G}$ on the right cosets of $B$ in $G$ is faithful and, since $B$ is maximal in $G$, it is also primitive with point-stabilizer $\overline{B}$. It follows that either $\Z {\overline{G}}=1$ or $|\overline{G}|$ is prime. In the latter case, $B=N$ is normal in $G$. For $g\in G\setminus B$, we have $G=\langle A,A^g\rangle\leq B$, which is a contradiction. Thus $\Z {\overline{G}}=1$ and hence $\Z G\leq N$. We will now prove the following.

\smallskip

\noindent\textsc{Claim. } $G=U\times \Z G$ where $U\cong\PGL(2,q)$ for some prime power $q\geq 3$, $A/\Z G\cong\C_{q+1}$, and $G/\Z G$ is $2$-transitive on $U/G_1$.

\smallskip

\noindent First we consider the case when $N\nleq A$. It follows that $B=NA = AN$ and $\overline{B}\cong A/(A\cap N)$ is abelian. From Lemma~\ref{lemma:Fr}, it follows that $\overline{G}=\overline{T}\rtimes \overline{B}$ for some $T$ with $N\leq T$, with $\overline{T}$ an elementary abelian $p$-group and $\overline{B}$ cyclic of order coprime to $p$. In particular, $N=T\cap B$.

Fix $g\in G\setminus B$. Since $B=NA$ and $|B:A|=2$, we see that $|N:(A\cap N)|=2$ and $|N:(A^g\cap N)|=2$. Since $A\cap A^g=\Z G\leq N$ it follows $(A\cap N)\cap(A^g\cap N) = \Z G$ and hence $|N:\Z G|=2$ or $4$. In particular, $N$ is a $2$-group. Let $n\in N\setminus A$. Since $B=NA$, we see that $n$ acts by inversion on $A$. In particular, for every $x\in A$, we obtain that $x^{-2}n=x^{-1}(nxn^{-1})n=x^{-1}nx\in N$ and hence $x^2\in N$. Since $\overline{B}\cong A/(A\cap N)$ is cyclic and since $N$ contains the square of each element of $A$, we obtain $|A:(A\cap N)|=2=|\overline{B}|$. Since $\overline{G}$ is primitive with point-stabilizers of order $|\overline{B}|=2$, it follows that it is dihedral of order $2p$ and $|\overline{T}|=p$ for some odd prime $p$. As $|\overline{B}|=2$ and $N$ is a $2$-group, we obtain that $B$ is a Sylow $2$-subgroup of $G$ and $|G|=p|B|$.  Let $Q$ be a Sylow $p$-subgroup of $T$.

Suppose that $N/\Z G$ is central in $T/\Z G$. Since $T=QN$ and $p>2$, we have $$\frac{T}{\Z G}=\frac{Q\Z G}{\Z G}\times \frac{N}{\Z G}.$$ In particular, since $p>2$,  the group $Q\Z G/\Z G$ is characteristic in $T/\Z G$ and hence normal in $G/\Z G$. Thus $Q\Z G\norml G$. Let $R=QA$. This is a subgroup of $G$ because $Q\Z G$ is normal in $G$ and $\Z G\leq A$. Since $Q$ is a $p$-group and $B$ is a $2$-group, we get $Q\cap B=1$. As $|B:A|=2$ and $G=RB$, it follows that $|G:R|=2$. We have shown that $R$ is a subgroup of $G$ containing $A$ which is neither $A$, $B$ or $G$. This is a contradiction.

Therefore $N/\Z G$ is not central in $T/\Z G$.  Recall that $N/\Z G$ is a normal Sylow $2$-subgroup of $T/\Z G$ of order at most $4$. It follows that $N/\Z G\cong\C_2\times\C_2$, $p=3$ and $T/\Z G\cong\Alt(4)$. Since $B$ is generalized dihedral and $A\cap N < N \leq B$, there exists an involution $x\in N\setminus A$. Let $t$ be an element of $T$ of order $3$. The action of $t$ on the non-identity elements of $N/\Z G$ is transitive hence every coset of $\Z G$ in $N$ contains an involution. It follows that $N$ is elementary abelian and splits over $\Z G$. Since $|T:N|=3$, $T$ splits over $N$ and hence also over $\Z G$. Similarly, since $B$ is generalized dihedral and $B\not\leq T$, there is an involution in $G\setminus T$. In particular, $G$ splits over $T$ and hence also over $\Z G$.

It follows that $G=U\times \Z G$ for some $U\cong \Alt(4)\rtimes \C_2$. Since $\overline{G}$ is not abelian, we conclude that $U\not\cong  \Alt(4)\times C_2$ and hence $U\cong\Sym(4)\cong\PGL(2,3)$. Since $B\cap U$ is a Sylow $2$-subgroup of $U$, it is isomorphic to $\D_4$ and hence $A/\Z G\cong\C_4$. This concludes the proof of our claim in the case when $N\nleq A$.

We now assume that $N\leq A$. Let $g\in G\setminus B$. Then $N\leq A\cap A^g=\Z G$ and hence $N=\Z G$. Let $\overline{T}$ be the socle of $\overline{G}$. (Here $T$ is a subgroup of $G$ with $N\leq T$.)

Suppose that $\overline{T}$ is elementary abelian. It follows that $\overline{G}=\overline{T}\rtimes\overline{B}$ and hence $T\cap B= N$. Let $R=AT$ and note that $|G:R|=2$ because $|B:A|=2$. Moreover, $G=BR$.  We have shown that $R$ is a subgroup of $G$ containing $A$ which is neither $A$, $B$ or $G$. This is a contradiction.

We may thus assume that $\overline{T}$ is not elementary abelian. Note that $G=G_1A$ and $G_1\cap A=1$. It follows that $\overline{G}=\overline{G_1}\,\overline{A}$ and $\overline{G_1}\cap \overline{A}=1$ (for the last equality use $N\leq A$). By applying Proposition~\ref{prop:ch} to $\overline{G}$ with $L=\overline{G_1}$, we see that $\overline{T}\cong\PSL(2,q)$ for some prime power $q\geq 4$, that $\overline{G}\cong\PGL(2,q)$, that $\overline{A}\cong C_{q+1}$, and that $\overline{G}$ is $2$-transitive. It remains to show that $G$ splits over $\Z G$.

Let $H$ be the last term of the derived series of $G$. Since $T/\Z G\cong\PSL(2,q)$ is perfect, it follows that $T=H\Z G$ and hence $\overline{H}\cong H/(H\cap\Z G)\cong\PSL(2,q)$ therefore $H\cap\Z G=\Z H$. In particular, $\Z H\leq H=H'$ and hence $H$ is a quotient of the universal central extension of $\PSL(2,q)$.

Suppose that $H\cong \overline{H}$. Then $H\cap\Z G=1$ and hence $T=H\times \Z G$. In particular, $T$ splits over $\Z G$. Since $B$ is generalized dihedral and $B\not\leq T$, there is an involution in $G\setminus T$. It follows that $G$ splits over $T$ and hence also over $\Z G$. Thus $G=U\times\Z G$ for some $U\cong\PGL(2,q)$ and the claim follows.

Suppose now that $H\not\cong\overline{H}$. Recall that the Sylow $2$-subgroup of the Schur multiplier of $\PSL(2,q)$ has order $2$ (see~\cite[page~xvi, Table~5]{ATLAS}.). It follows that  $H\cong\SL(2,q)$ and $T=H\times V$ for some subgroup $V$ of index $2$ in $\Z G$.  In particular, every involution of $T$ is central in $G$. Since $|G:T|=2$ and $B\not\leq T$, we have $|B:T\cap B|=2$. Moreover, since $|B:A|=2$ and $A\not\leq T$ it follows that $|T\cap B:T\cap A|=2$. In particular, there is an involution in $T\cap B$ which acts by inversion on $A$. This contradicts the fact that every involution in $T$ is central in $G$. ~$_\blacksquare$

\smallskip

We now show that, by replacing $U$ with a subgroup of $G$ isomorphic to $U$ if necessary, we have $G_1\leq U$. Suppose that $G_1\nleq U$. Clearly $G_1\cong \overline{G_1}\cong\FF_q\rtimes\C_{q-1}$. Let $G_1^2=\langle g^2\mid g\in G_1\rangle$. An easy computation yields that $G_1^2=G_1$ if $q$ is even and  $G_1^2\cong\FF_q\rtimes\C_{(q-1)/2}$ if $q$ is odd. Let $g\in G_1$. Then $g=uz$ for some $u\in U$ and some $z\in \Z G$. Thus $g^2=(uz)^2=u^2z^2=u^2\in U$ and hence $G_1^2\leq U$. Since $G_1\nleq U$, it follows that $q$ is odd and $G_1\cap U=G_1^2\cong\FF_q\rtimes\C_{(q-1)/2}$. Since $[U,U]\cong \PSL(2,q)$, it can be seen that $G_1^2\leq [U,U]$ and hence $G_1^2=G_1\cap [U,U]$. Let $g\in G_1\setminus U$. Since $|G_1:G_1\cap [U,U]|=2$, it follows that $|\langle g\rangle[U,U]|=2|[U,U]|=|U|$. Write $g=uz$ for some $u\in U$ and some $z\in \Z G$. Note that $G_1\leq \langle g\rangle [U,U]$, and that $g$ acts on $[U,U]$ as $u$, hence $\langle g\rangle[U,U]\cong U$ and we may replace $U$ by $\langle g\rangle[U,U]$.

Since $G=U\times \Z G$ and $G_1=G_1\times 1$, we see that $G$ is endowed with the natural product action on $U/G_1\times \Z G$, which concludes the proof.
\end{proof}

\section{An application to Cayley digraphs on abelian groups}
\label{sec:AppDigraph}

\begin{definition}\label{def:semiwreath}
Let $A$ be an abelian group and let $1< H\leq K<A$. We say that the Cayley digraph $\Cay(A,S)$ is a \emph{generalized wreath} digraph with respect to $(H,K,A)$ if $S\setminus K$ is a union of $H$-cosets.
\end{definition}

Definition \ref{def:semiwreath} is fairly natural and generalizes the well-established definition of wreath digraphs (which is the case $H=K$). Intuitively, in the digraph $\Cay(A,S)$, for $v,w\notin K$, if we have an arc from $v$ to $w$ with $vK$ and $wK$ two distinct $K$ cosets, then there is also an arc from $v$ to $wh$, for every $h\in H$. We now give an application of Theorem~\ref{thm:main} to the study of Cayley digraphs on abelian groups.

\begin{theorem}\label{thm:graphmain}
Let $G$ be a permutation group on $\Omega$ with a proper self-normalizing abelian regular subgroup $A$. Then $|A|$ is not a prime power and there exist two groups $H$ and $K$ with $1<H\leq K<A$, and for every digraph $\vGa$ with $G\leq \Aut(\vGa)$, we have that $\vGa$ is a generalized wreath digraph with respect to $(H,K,A)$.
\end{theorem}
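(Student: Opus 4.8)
The plan is to reduce to the maximal case governed by Theorem~\ref{thm:main}, translate the generalized-wreath condition into a statement about the orbits of a point-stabiliser, and then read off $H$ and $K$ from the explicit structure supplied by that theorem. First I would pass to a subgroup in which $A$ is maximal: since $A<G$, choose $M$ minimal subject to $A<M\le G$. Then $A$ is maximal in $M$ and $\nor M A\le\nor G A=A$, so $A$ is a maximal abelian regular subgroup of $M$ with $\nor M A=A$. Writing $M_1$ for the stabiliser in $M$ of the vertex $1$, Theorem~\ref{thm:main} applies to $M$ and yields a prime $p$ and nontrivial subgroups $Q,S$ with $P:=M_1\times Q$ the abelian normal Sylow $p$-subgroup of $M$, with $M=P\rtimes S$ for a Hall $p'$-subgroup $S$, with $A=Q\times S$, and with $N:=\Z M=Q\times\cent S{M_1}$. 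Since $Q\ne1$ is a $p$-group and $S\ne1$ is a $p'$-group, $|A|=|Q|\,|S|$ has at least two prime divisors, which already gives that $|A|$ is not a prime power. I would set $K:=N$; note that $N<A$ (else $A\norml M$, contradicting $\nor M A=A<M$) and $N\ne1$, so $1<K<A$.

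Next I would reduce the digraph statement to orbits. As $M\le G\le\Aut(\vGa)$, it suffices to establish the conclusion using only $M\le\Aut(\vGa)$, and the resulting $H,K$ depend only on $M$ (hence on $G$ and $A$), not on $\vGa$. Because $A$ is regular and $A\le\Aut(\vGa)$, we have $\vGa=\Cay(A,R)$ for some connection set $R$; and since $M_1$ fixes the vertex $1$, the set $R$ is a union of $M_1$-orbits on $\Omega=A$. So the target is to choose $H$ so that every $M_1$-orbit meeting $A\setminus N$ is a union of $H$-cosets. Identifying $\Omega$ with $A$ via $a\leftrightarrow M_1a$ and computing the $M_1$-action, a direct calculation shows that for $a=xs$ with $x\in Q$ and $s\in S$ the $M_1$-orbit of $a$ equals $(xB_s)\times\{s\}$, where $B_s:=\pi_Q(M_1^{s^{-1}})\le Q$ is the image of the conjugate $M_1^{s^{-1}}$ under the projection $\pi_Q\colon P=M_1\times Q\to Q$. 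Using $\nor M{M_1}=\cent M{M_1}=M_1\times N$ one checks that $B_s=1$ precisely when $s\in\cent S{M_1}$, that is, precisely when $a\in N$. Hence the orbits inside $N$ are singletons, while every orbit meeting $A\setminus N$ is a coset of some $B_s\le Q$ lying in a single $S$-slice.

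The heart of the argument, and the step I expect to be the main obstacle, is to show that all the $B_s$ with $s\notin\cent S{M_1}$ coincide and are nontrivial, so that one subgroup $H$ serves uniformly. Here I would exploit the coprime action of $S$ on $P$ together with the primitivity from Theorem~\ref{thm:main}: the group $S$ acts on $V:=P/Q$ through the cyclic quotient $\overline A=A/N$, faithfully and irreducibly, so identifying $V\cong\FF_{p^\ell}$ we may regard $\overline A$ as a subgroup of $\FF_{p^\ell}^\times$ acting by multiplication, whence every nontrivial element acts without nonzero fixed points. This forces $\cent P S=Q$, and by coprime action $P=[P,S]\times Q$ with $V_0:=[P,S]\cong V$ an $S$-invariant complement to $Q$. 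Writing the other complement $M_1$ as the graph $\{u\,\theta(u):u\in V_0\}$ of a homomorphism $\theta\colon V_0\to Q$, a short computation gives $B_s=\theta\big((1-\eta_s)V_0\big)$, where $\eta_s\ne1$ is the scalar by which $s$ acts on $V_0$; since $1-\eta_s$ is invertible, $B_s=\theta(V_0)=\mathrm{Im}\,\theta$ for every $s\notin\cent S{M_1}$.

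Finally I would verify $\mathrm{Im}\,\theta\ne1$: otherwise $M_1=V_0=[P,S]$ would be normal in $M$, contradicting the fact that the point-stabiliser $M_1$ is nontrivial and core-free. Setting $H:=\mathrm{Im}\,\theta\le Q\le N=K$, every $M_1$-orbit meeting $A\setminus N$ is then exactly an $H$-coset, so $R\setminus K$ is a union of $H$-cosets and $\vGa$ is a generalized wreath digraph with respect to $(H,K,A)$, with $1<H\le K<A$ as required. The only genuinely delicate point is the third paragraph, where the fixed-point-free action coming from the affine primitive quotient is what makes the varying images $B_s$ collapse to the single nonzero subgroup $\mathrm{Im}\,\theta$; the remaining steps are either structural bookkeeping from Theorem~\ref{thm:main} or the standard identification of the connection set with a union of point-stabiliser orbits.
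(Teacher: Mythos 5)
Your proposal is correct, and up to its midpoint it coincides with the paper's own argument: pass to a subgroup $M$ in which $A$ is maximal (so $\nor M A=A<M$), apply Theorem~\ref{thm:main}, observe that $|A|=|Q||S|$ with $Q\neq 1$ a $p$-group and $S\neq 1$ a $p'$-group, and take $K=N$. Where you genuinely diverge is the endgame. The paper computes nothing further: it invokes Theorem~\ref{thm:main}(\ref{youm}) to know that $H:=G_1G_s\cap A$ is independent of the choice of $s\notin T=\nor G{G_1}$, and then uses a block-of-imprimitivity argument --- for $x\in\Omega\setminus K$ the set $x^{G_1G_x}$ is a block whose stabilizer in $A$ is $H$, and $x^{G_1}=x^{G_1G_x}$ --- to identify every $G_1$-orbit outside $K$ with an $H$-coset. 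You never use part~(\ref{youm}); instead you re-derive its content by hand: parametrizing the $M_1$-orbits as $(xB_s)\times\{s\}$ with $B_s=\pi_Q(M_1^{s^{-1}})$, splitting $P=[P,S]\times Q$ by coprime action, writing $M_1$ as the graph of a homomorphism $\theta\colon [P,S]\to Q$, and using the Schur's-lemma field structure (available because Theorem~\ref{thm:main}(\ref{yam}) makes the action of $\overline{A}$ on $P/Q$ faithful and irreducible, so nontrivial elements act without nonzero fixed points) to collapse all the $B_s$ onto $\mathrm{Im}\,\theta$. All of this checks out; the only slip is notational, since the scalar arising in your computation is $\eta_{s^{-1}}$ rather than $\eta_s$, which is harmless because both differ from $1$ simultaneously and $1-\eta$ is invertible for any scalar $\eta\neq 1$. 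Moreover your $H$ agrees with the paper's: any subgroup $X$ with $G_1\leq X\leq P$ satisfies $X\cap A=X\cap Q=\pi_Q(X)$, so the paper's $G_1G_1^{s}\cap A$ equals $\pi_Q(G_1^{s})$, which is your $B_{s^{-1}}=\mathrm{Im}\,\theta$. The trade-off is clear: your route needs only parts (1)--(8) of Theorem~\ref{thm:main} and is self-contained from there, at the cost of essentially reproving part~(\ref{youm}) by the same coprime-action ideas used in its original proof, whereas the paper's endgame is much shorter precisely because that uniformity statement was packaged into Theorem~\ref{thm:main} in advance.
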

\begin{proof}
Let $M$ be a subgroup of $G$ with $A$ maximal in $M$. Clearly $\nor M A=A<M$ and hence, by replacing $G$ by $M$, we may assume that $A$ is maximal in $G$.  This allows us to apply Theorem~\ref{thm:main} and we adopt the notation from its statement. We see immediately that $|A|$ is not a prime power.

Let $T=\nor G{G_1}$. By Theorem~\ref{thm:main}~(\ref{yi}), (\ref{yu}) and~(\ref{yog}), we have that $T$ contains the unique Sylow $p$-subgroup of $G$ and hence $G_y\leq T$ for every $y\in\Omega$. Since $G_1$ is normal in $T$, it follows that $G_1G_y$ is a subgroup of $T$ and $G_1G_y=G_yG_1$. Let $s\in G\setminus T$ and let $H=G_1G_s\cap A$. By Theorem~\ref{thm:main}~(\ref{youm}), $H$ does not depend on the choice of $s$. If $H=1$ then, by order considerations,  $G_1G_s=G_1$ and hence $s\in T$, which is a contradiction. Therefore $H\neq 1$.

Let $K=N$. By Theorem~\ref{thm:main}~(\ref{yog}), $T\cap A = (G_1\times N)\cap A=(G_1\cap A)\times N = N= K$ and hence $H\leq K<A$. Since $A$ is a regular subgroup of $G$, we can identify $\Omega$ with $A$. Let $x$ in $\Omega\setminus K$. Since $T\cap A=K$, we have $x\notin T$ and $H=G_1G_x\cap A$. Since $G_1G_x$ is a subgroup containing $G_1$, it follows that $x^{G_1G_x}$ is a block of imprimitivity for $G$ and hence also for $A$. Moreover, $G_1G_x$ is the stabilizer of this block in $G$, hence $H=G_1G_x\cap A$ is the stabilizer of this block in $A$, therefore $x^{G_1G_x}$ is an $H$-coset. On the other hand, $x^{G_1}=x^{G_xG_1}=x^{G_1G_x}$. We have shown that every $G_1$-orbit on $\Omega\setminus K$ is an $H$-coset.  If follows that every digraph $\vGa$ with $G\leq \Aut(\vGa)$ is a generalized wreath digraph with respect to $(H,K,A)$.\end{proof}

Moving from Cayley digraphs to Cayley graphs, the theorem corresponding to Theorem~\ref{thm:graphmain} is Theorem~\ref{thm:graphmain2}, but we first need the following definition. Given two graphs $\vGa_1=(\mathcal{V}_1,\mathcal{A}_1)$ and $\vGa_2=(\mathcal{V}_2,\mathcal{A}_2)$, the \emph{direct product} $\vGa_1\times \vGa_2$ of $\vGa_1$ and $\vGa_2$ is the graph with vertex-set $\mathcal{V}_1\times\mathcal{V}_2$ and all arcs of the form $((u_1,u_2),(v_1,v_2))$ where $(u_1,v_1)\in\mathcal{A}_1$ and $(u_2,v_2)\in\mathcal{A}_2$.

\begin{theorem}\label{thm:graphmain2}
Let $G$ be a permutation group with an abelian regular subgroup $A$. Suppose that $\nor G A$ is a proper subgroup of $G$ and is generalized dihedral on $A$. Then one of the following occurs:
\begin{enumerate}
\item $|A|$ is not a prime power and there exist two groups $H$ and $K$ with $1<H\leq K<A$, and for every graph $\Gamma$ with $G\leq \Aut(\Gamma)$, we have that $\Gamma$ is a generalized wreath graph with respect to $(H,K,A)$; or
\item there exist two groups $C$ and $Z$ with $A=C\times Z$, with $C\cong \C_t$ for some $t\geq 4$ and with $Z$ an elementary abelian $2$-group,  such that, for every graph $\Gamma$ with $G\leq \Aut(\Gamma)$, we have that $\Gamma$ is isomorphic to the direct product of $\Lambda$  with a Cayley graph over $Z$, where $\Lambda$ is either complete or edgeless, possibly with a loop at each vertex.
\end{enumerate}
\end{theorem}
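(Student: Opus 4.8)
The plan is to mimic the reduction used in the proof of Theorem~\ref{thm:graphmain}, but to feed its output into whichever of Theorem~\ref{thm:graphmain} or Theorem~\ref{thm:main2} is appropriate, the choice being dictated by how $B=\nor G A$ lies in the lattice of subgroups above $A$. The key elementary remark is that for every $M$ with $A\leq M\leq G$ we have $\nor M A=M\cap B$, and since $\vert B:A\vert=2$ this is either $A$ (exactly when $B\nleq M$) or $B$ (exactly when $B\leq M$). I would also record at the start, fixing an involution $\iota\in B\setminus A$ inverting $A$, that $A$ is self-centralising (being transitive abelian), so $\Z G\leq A$, and that $\iota$ inverts $\Z G$; hence $\Z G$ is an elementary abelian $2$-group.

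Suppose first that some $M$ satisfies $A<M\leq G$ and $B\nleq M$. Then $\nor M A=A$, and choosing $M_0$ minimal with $A<M_0\leq M$ makes $A$ maximal in $M_0$; moreover $B\nleq M_0$, for otherwise maximality would give $B=M_0\leq M$. Thus $A$ is a proper self-normalising abelian regular subgroup of $M_0$, and Theorem~\ref{thm:graphmain} applies: it produces groups $1<H\leq K<A$, forces $\vert A\vert$ to be a non-prime-power, and shows that every digraph admitting $M_0$ is a generalized wreath digraph with respect to $(H,K,A)$. Since any graph $\Gamma$ with $G\leq\Aut(\Gamma)$ also admits $M_0$, this is exactly conclusion~(1).

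In the complementary case every overgroup of $A$ contains $B$, so $B$ is the unique minimal overgroup of $A$. Choosing $M$ with $B$ maximal in $M$ (possible as $B<G$), the unique subgroup strictly between $A$ and $M$ is $B$, so $\nor M A=B$ is generalized dihedral on $A$ and Theorem~\ref{thm:main2} applies to $M$. This yields $M=U\times Z$ with $Z=\Z M$ elementary abelian of exponent $2$, $M_1\leq U\cong\PGL(2,q)$ for some $q\geq 3$, $A/Z\cong\C_{q+1}$, and $U$ acting $2$-transitively on $U/M_1$, the whole of $M$ carrying the associated product action on $U/M_1\times Z$. The decomposition $A=C\times Z$ is then automatic: since $Z=1\times Z\leq A$ is a direct factor of the external product $U\times Z$, any $(u,z)\in A$ gives $(u,1)=(u,z)(1,z)\in A$ (using $z^2=1$), so the subgroup $C:=A\cap(U\times 1)$ maps onto $A/Z$ while meeting $Z$ trivially; hence $A=C\times Z$ with $C\cong A/Z\cong\C_{q+1}$ cyclic of order $t=q+1\geq 4$.

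It remains to convert this into the graph statement, and I expect this to be the main obstacle. Identifying $\Omega$ with $A=C\times Z$ via the regular action (so $C$ acquires the regular torus action on $U/M_1$ and $Z$ acts regularly on itself), the point stabiliser $M_1\leq U$ moves only the first coordinate, fixes the second pointwise, and by $2$-transitivity is transitive on $C\setminus\{1\}$; its orbits on $A$ are therefore the singletons $\{(1,z)\}$ and the sets $(C\setminus\{1\})\times\{z\}$ for $z\in Z$. Hence the connection set of any graph admitting $G$ is determined by two subsets of $Z$, one recording adjacencies within a single $C$-fibre and one recording adjacencies across distinct fibres. The delicate point, needed for conclusion~(2), is to read off from this data the factor $\Lambda$ on $U/M_1$ (complete or edgeless, possibly with a loop at each vertex) together with a Cayley graph on $Z$ realising $\Gamma$ as their direct product: one must treat the diagonal and off-diagonal orbits simultaneously and keep scrupulous track of loops, and it is here that the precise interplay of the two subsets of $Z$ — and not merely $2$-transitivity — has to be exploited to pin down the decomposition.
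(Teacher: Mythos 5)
Your reduction to the two cases, the choice of $M_0$ and $M$, and the applications of Theorems~\ref{thm:graphmain} and~\ref{thm:main2} are correct and essentially identical to the paper's argument, as is your explicit derivation of $A=C\times Z$. The genuine gap is the step you yourself flag and never carry out: deducing conclusion~(2) from the orbit structure. This gap cannot be closed, because your orbit computation is right and conclusion~(2) does not follow from it; in fact conclusion~(2) is false as stated. As you observe, the point-stabilizer orbits on $A=C\times Z$ are the singletons $\{(1_C,z)\}$ and the sets $(C\setminus\{1_C\})\times\{z\}$, so the admissible connection sets are exactly $S=(\{1_C\}\times S_1)\cup((C\setminus\{1_C\})\times S_2)$ with $S_1,S_2\subseteq Z$ chosen \emph{independently}, and when $S_1\neq S_2$ the resulting graph need not be a direct product of the required kind. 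Concretely, take $q=3$, $G=\PGL(2,3)\times\C_2\cong\Sym(4)\times\C_2$ in product action on $8$ points, $A=\langle c\rangle\times\langle z\rangle\cong\C_4\times\C_2$, and $S_1=\{z\}$, $S_2=\{1\}$, that is, $S=\{z,c,c^2,c^3\}$. All hypotheses of the theorem hold (here $\nor G A\cong\D_4\times\C_2$ is generalized dihedral on $A$ and has index $3$ in $G$); conclusion~(1) is unavailable because $|A|=8$ is a prime power; and $\Cay(A,S)$ consists of two disjoint $4$-cliques joined by a perfect matching, a graph that is loopless, connected, non-bipartite and $4$-regular, whereas every candidate product $\Lambda\times\Cay(Z,S'')$ on $8$ vertices is edgeless, has loops, is disconnected, is bipartite, or is $6$-regular. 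So no argument can complete your final step.

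The instructive point of the comparison is that the paper's own proof goes wrong at exactly the spot where you hesitated: after establishing the product action, it asserts that $2$-transitivity of $U$ forces $S=S'\times S''$ with $S'\in\{\emptyset,\{1_C\},C\setminus\{1_C\},C\}$, which is precisely the claim your two-subset analysis refutes (it silently assumes $S_1=\emptyset$, or $S_2=\emptyset$, or $S_1=S_2$). The correct conclusion~(2) must describe the full family you identified, for instance: the edge set of $\Gamma$ is the union of the edge sets of $\Lambda_1\times\Cay(Z,S_1)$ and $\Lambda_2\times\Cay(Z,S_2)$, where $\Lambda_1$ is edgeless with a loop at each vertex and $\Lambda_2$ is complete and loopless. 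With that weaker conclusion your argument is already complete, and the downstream enumeration survives: the corrected analogue of Lemma~\ref{lemma:strangecounting} counts pairs $(S_1,S_2)$, giving $4^{|Z|}$ instead of $2^{|Z|+2}$ choices per decomposition, and since $|Z|\leq n/4$ and $m\geq |Z|$ this is still negligible against the $2^{m/2+n/2}$ inverse-closed subsets, so Theorem~\ref{th:epsilon2} is unaffected apart from trivial changes.
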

\begin{proof}
Let $\nor G A = B$ and let $M$ be a subgroup of $G$ with $B$ maximal in $M$. Clearly $\nor M A=B<M$ and hence, by replacing $G$ by $M$, we may assume that $B$ is maximal in $G$.  Now, suppose that there exists a group $X$ with $A<X<G$, and $X\neq B$. Since $\nor G A = B$ and $A$ is maximal in $B$, it follows that $\nor X A=A$. We may then apply Theorem~\ref{thm:graphmain} to conclude that part~(1) holds.

We may thus assume that the only proper subgroups of $G$ containing $A$ are $A$ and $B$ and hence the hypothesis of Theorem~\ref{thm:main2} is satisfied. It then follows that $\Z G$ is an elementary abelian $2$-group contained in $A$, that  $G=U\times \Z G$ where $G_1\leq U\cong\PGL(2,q)$ for some prime power $q\geq 3$, that $A/ \Z G\cong\C_{q+1}$, that $U$ acts $2$-transitively on $U/G_1$ and that $G$ is endowed with the natural product action on $U/G_1\times \Z G$.

As $G$ is endowed with the canonical product action, we have $A=C\times\Z G$ for some $C\leq U$ with $C\cong\C_{q+1}$. Now $G=U\times \Z G$ acts by product action  on $C\times\Z G$.

Let $\Gamma$ be a graph with $G\leq \Aut(\Gamma)$. In particular, $\Gamma=\Cay(A,S)$ for some subset $S$ of $A$. As  $U$ is $2$-transitive in its action on the cosets of $G_1$, we have $S=S'\times S''$, where $S'\in\{\emptyset,\{1_C\},C\setminus\{1_C\},C\}$ and $S''$ is a subset of $\Z G$.  From this description of $S$ it follows that $\Gamma$ is the direct product of $\Cay(C,S')$ and $\Cay(\Z G,S'')$. The proof then follows by taking $Z=\Z G$ and $t=q+1$.
\end{proof}

\section{Enumeration}
\label{sec:EnumerationDigraphs}

If $G$ is a group of order $n\geq 2$, then it is at most $\lfloor\log_2(n)\rfloor$-generated and hence $|\Aut(G)|\leq n^{\log_2(n)}= 2^{(\log_2(n))^2}$. Similarly, any subgroup of $G$ is also at most $\lfloor\log_2(n)\rfloor$-generated and hence $G$ has at most $n^{\log_2(n)}= 2^{(\log_2(n))^2}$ distinct subgroups. These facts will be used repeatedly.

\subsection{Enumeration of Cayley digraphs on abelian groups}\label{secb:EnumerationDigraphs}
We first deal with the enumeration of digraphs because it is easier than the enumeration of graphs. Moreover, the general outline of the proof is the same,  hence this section serves as a template for the next one. Our first goal is to prove two technical lemmas which, loosely speaking, give an upper bound on the number of ``bad'' subsets, in view of Theorem~\ref{thm:graphmain}.

\begin{lemma}\label{lemma:semiwreathdigraphcounting}
Let $A$ be a group of order $n$. The number of subsets $S$ of $A$ such that there exist two groups $H$ and $K$ with $1<H\leq K<A$ and such that $S\setminus K$ is a union of left (or right) $H$-cosets is at most $2^{3n/4+2(\log_2(n))^2}$.
\end{lemma}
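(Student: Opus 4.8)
The plan is to fix the pair of subgroups $(H,K)$ first, bound the number of subsets $S$ that are compatible with that specific pair, and only afterwards sum over all admissible pairs. So suppose $H$ and $K$ are fixed with $1<H\leq K<A$, and let us count the subsets $S$ for which $S\setminus K$ is a union of left $H$-cosets. The key elementary observation is that, since $H\leq K$, every left $H$-coset is either contained in $K$ or disjoint from $K$: indeed, if $aH\cap K\neq\emptyset$ then $ah\in K$ for some $h\in H\leq K$, whence $a\in K$ and $aH\subseteq K$. Consequently $A\setminus K$ is itself a union of exactly $(n-|K|)/|H|$ left $H$-cosets, and asking that $S\setminus K$ be a union of left $H$-cosets is the same as choosing an arbitrary subset of these cosets.

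Next I would decompose $S$ as the disjoint union $(S\cap K)\cup(S\setminus K)$ and count the two parts independently. The part $S\cap K$ may be an arbitrary subset of $K$, contributing $2^{|K|}$ choices, while $S\setminus K$ is an arbitrary union of the $(n-|K|)/|H|$ left $H$-cosets in $A\setminus K$, contributing $2^{(n-|K|)/|H|}$ choices. Hence, for this fixed pair, the number of valid subsets is exactly $2^{|K|+(n-|K|)/|H|}$. To control the exponent I would invoke the two constraints separately: since $H>1$ we have $|H|\geq 2$, so $(n-|K|)/|H|\leq (n-|K|)/2$ and the exponent is at most $(n+|K|)/2$; and since $K$ is a proper subgroup it has index at least $2$, so $|K|\leq n/2$, which makes the exponent at most $3n/4$. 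Thus at most $2^{3n/4}$ subsets are compatible with any single pair $(H,K)$.

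Finally I would sum over all admissible pairs. As recorded at the beginning of this section, $A$ has at most $2^{(\log_2(n))^2}$ subgroups, so there are at most $2^{2(\log_2(n))^2}$ pairs $(H,K)$. Since the family of $S$ we wish to count is contained in the union of the per-pair families, multiplying the two estimates gives the claimed bound $2^{3n/4+2(\log_2(n))^2}$. The statement for right $H$-cosets is entirely analogous, replacing left cosets by right cosets throughout (or applying the left-coset count to the opposite group), and yields the same bound verbatim.

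There is no serious obstacle here; the argument is a clean two-stage count. The only point requiring a little care is combining the two elementary inequalities $|H|\geq 2$ and $|K|\leq n/2$ so as to identify the worst case $|K|=n/2$, $|H|=2$ correctly: it is precisely the balance between a large $K$ and a small $H$ that produces the dominant exponent $3n/4$, while the quasipolynomial factor $2^{2(\log_2(n))^2}$ is absorbed from the subgroup count and is negligible compared with the main term.
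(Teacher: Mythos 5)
Your proof is correct and follows essentially the same route as the paper's: fix the pair $(H,K)$, count exactly $2^{|K|+(n-|K|)/|H|}$ compatible subsets, bound the exponent by $3n/4$ using $|H|\geq 2$ and $|K|\leq n/2$, and multiply by the bound $2^{2(\log_2(n))^2}$ on the number of pairs. The only difference is that you spell out details the paper leaves implicit (the partition of $A\setminus K$ into $H$-cosets and the left/right symmetry), which is fine.
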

\begin{proof}
As noted earlier, $A$ has at most $2^{(\log_2(n))^2}$ distinct subgroups hence there are at most $2^{2(\log_2(n))^2}$ ways of choosing $H$ and $K$. We now count the number of possibilities for $S$ for fixed $H$ and $K$. Let $h=|H|$ and let $k=|K|$. Then $A$ admits exactly $2^{k+\frac{n-k}{h}}$ subsets satisfying the hypothesis. Since $h\geq 2$ and $k\leq n/2$, we have $k+\frac{n-k}{h}\leq 3n/4$ and the result follows.
\end{proof}

Lemma~\ref{lemma:normalizeddigraphcounting} is a weaker version of a result from~\cite{Ba2}, but the proof is very easy.

\begin{lemma}\label{lemma:normalizeddigraphcounting}
Let $G$ be a group of order $n$. The number of subsets of $G$ which are normalized by some element of $\mathrm{Aut}(G)\setminus\{1\}$ is at most $2^{3n/4+(\log_2(n))^2}$.
\end{lemma}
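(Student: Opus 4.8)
The plan is to count, for each non-identity automorphism $\alpha \in \Aut(G)$ separately, the number of subsets $S \subseteq G$ fixed by $\alpha$ (in the sense $S^\alpha = S$), and then sum over all $\alpha$ using the union bound. The key observation is that a subset $S$ with $S^\alpha = S$ is determined by its intersection with each $\langle\alpha\rangle$-orbit on $G$: on each orbit, $S$ either contains all of it or none of it. Hence the number of $\alpha$-invariant subsets is exactly $2^{o(\alpha)}$, where $o(\alpha)$ denotes the number of orbits of $\langle\alpha\rangle$ acting on $G$. So the heart of the matter is to bound $o(\alpha)$ from above by $3n/4$ for every non-identity $\alpha$.

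To bound the number of orbits, I would argue that a non-identity automorphism $\alpha$ must move ``many'' elements, so that the orbits cannot all be short. Since $\alpha$ fixes the identity and fixes the set of fixed points $\mathrm{Fix}(\alpha)$ (which is a subgroup of $G$), and since $\alpha \neq 1$, we have $\mathrm{Fix}(\alpha) < G$, so $|\mathrm{Fix}(\alpha)| \leq n/2$. The fixed points contribute $|\mathrm{Fix}(\alpha)|$ singleton orbits, while each non-fixed element lies in an orbit of size at least $2$. Writing $f = |\mathrm{Fix}(\alpha)|$, the remaining $n - f$ elements are partitioned into orbits of size $\geq 2$, contributing at most $(n-f)/2$ orbits. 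Therefore
\[
o(\alpha) \leq f + \frac{n-f}{2} = \frac{n+f}{2} \leq \frac{n + n/2}{2} = \frac{3n}{4},
\]
using $f \leq n/2$. This gives at most $2^{3n/4}$ invariant subsets for each fixed $\alpha$.

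Finally, summing over all non-identity automorphisms and recalling the standing bound $|\Aut(G)| \leq 2^{(\log_2(n))^2}$ from the start of this section, the total number of subsets normalized by some element of $\Aut(G) \setminus \{1\}$ is at most $|\Aut(G)\setminus\{1\}| \cdot 2^{3n/4} \leq 2^{(\log_2(n))^2} \cdot 2^{3n/4} = 2^{3n/4 + (\log_2(n))^2}$, as required. The only subtle point is the clean bound $|\mathrm{Fix}(\alpha)| \leq n/2$: this uses that $\mathrm{Fix}(\alpha)$ is a proper subgroup (proper because $\alpha \neq 1$, and a subgroup because $\alpha$ is a homomorphism), so by Lagrange its index is at least $2$. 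I expect this step, though elementary, to be the essential input; everything else is the orbit-counting bookkeeping, which is routine once $f \leq n/2$ is in hand.
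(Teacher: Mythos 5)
Your proposal is correct and follows essentially the same route as the paper's proof: fix a non-identity automorphism, note that invariant subsets are unions of orbits, bound the orbit count via the fixed-point subgroup (your $\mathrm{Fix}(\alpha)$ is the paper's $\cent G \varphi$, and $f\leq n/2$ by properness gives the $3n/4$ exponent), then apply the union bound over $\Aut(G)$ using $|\Aut(G)|\leq 2^{(\log_2(n))^2}$. The only cosmetic difference is that you make the Lagrange argument for $f\leq n/2$ explicit, which the paper leaves implicit.
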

\begin{proof}
Recall that $|\mathrm{Aut}(G)|\leq 2^{(\log_2(n))^2}$. We now count the number of subsets which are normalized by a fixed $\varphi\in\mathrm{Aut}(G)\setminus\{1\}$. Note that $\varphi$ induces orbits of length $1$ on $\cent G\varphi$ and of length at least $2$ on $G\setminus\cent G\varphi$. Let $c=|\cent G\varphi|$. The number of subsets of $G$ which are normalized by $\varphi$ is at most $2^{c+(n-c)/2}=2^{n/2+c/2}$. Since $c\leq n/2$, we have ${n/2+c/2}\leq 3n/4$ and the result follows.
\end{proof}

Theorem~\ref{thm:graphmain} is combined with Lemmas~\ref{lemma:semiwreathdigraphcounting} and~\ref{lemma:normalizeddigraphcounting} to prove Theorem~\ref{th:epsilon1}. Before proceeding, we set some notation which will be used in this section and the next.

Let $2^A$ denote the set of subsets of $A$, let $2^A_{DRR}$ denote the set of subsets $S$ of $A$ such that $\Cay(A,S)$ is a DRR, let $2^A_{gw}$ denote the set of subsets $S$ of $A$ with the property that $\Cay(A,S)$ is a generalized wreath digraph with respect to $(H,K,A)$ for some $H,K\leq A$, and let $2^A_{nor}$ denote the set of subsets $S$ of $A$ with the property that $\Cay(A,S)$ admits an element of $\Aut(A)\setminus\{1\}$ as a digraph automorphism. Finally, let $2^A_{bad}=2^A_{gw} \cup 2^A_{nor}$ and let $2^A_{good}=2^A\setminus 2^A_{bad}$.

\begin{proof}[Proof of Theorems~$\ref{th:main1}$ and~$\ref{th:epsilon1}$]
It follows  from Theorem~\ref{thm:graphmain} that $2^A_{good}\subseteq 2^A_{DRR}$ and hence $2^A\setminus 2^A_{DRR}\subseteq 2^A_{bad}$. By Lemmas~\ref{lemma:semiwreathdigraphcounting} and~\ref{lemma:normalizeddigraphcounting}, we have   $|2^A_{gw}|\leq 2^{3n/4+2(\log_2(n))^2}$  and $|2^A_{nor}|\leq 2^{3n/4+(\log_2(n))^2}$ therefore $|2^A_{bad}|\leq 2^{3n/4+2(\log_2(n))^2 +1}$. This shows Theorem~\ref{th:epsilon1}. Since $|2^A|=2^n$, we have $|2^A_{bad}|/|2^A|\to 0$ as $n\to\infty$ and Theorem~\ref{th:main1} follows.
\end{proof}

\subsection{Enumeration of Cayley graphs on abelian groups}\label{secb:EnumerationGraphs}
The general outline of this section is the same as Section~\ref{secb:EnumerationDigraphs}'s. We first prove a few upper bounds on the number of ``bad'' subsets, this time with respect to Theorem~\ref{thm:graphmain2}.

\begin{lemma}\label{lemma:strangecounting}
Let $A$ be an abelian group of order $n$. The number of quadruples $(C,Z,S',S'')$ with $A=C\times Z$, $C$ a cyclic group of order $t\geq 4$, $Z$ an elementary abelian $2$-group, $S'\in \{C,\emptyset,\{1\},C\setminus\{1\}\}$, and $S''\subseteq Z$ is at most $2^{n/4+2\log(n)-1}$.
\end{lemma}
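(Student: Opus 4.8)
The plan is to organise the count by first summing over the admissible decompositions $A=C\times Z$ and, for each one, counting the pairs $(S',S'')$. For a fixed pair $(C,Z)$ there are exactly four choices for $S'$ (it ranges over a set of size $4$) and exactly $2^{|Z|}$ choices for $S''\subseteq Z$, so each decomposition contributes $4\cdot 2^{|Z|}$ quadruples. Writing $t=|C|$, the hypothesis $t\geq 4$ gives $|Z|=n/t\leq n/4$, whence $4\cdot 2^{|Z|}\leq 2^{n/4+2}$ for every decomposition. The whole difficulty is therefore reduced to bounding the number of admissible decompositions and, more precisely, to controlling the sum $\sum_{(C,Z)}4\cdot 2^{n/t}$ so that the total comes out as $2^{n/4+2\log_2(n)-1}=\frac{n^2}{2}\,2^{n/4}$.

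The heart of the argument is to bound the number of decompositions using the $2$-torsion. If $A=C\times Z$ with $C\cong\C_t$ and $Z$ elementary abelian of exponent $2$, then, writing $A[2]$ for the (elementary abelian) subgroup of elements of order at most $2$, an element $(c,z)$ has order at most $2$ exactly when $c\in C[2]$, so $A[2]=C[2]\times Z$ with $|C[2]|=\gcd(2,t)\in\{1,2\}$. The key consequence is that $Z$ has index $1$ or $2$ in $A[2]$, so $Z$ is forced to be almost all of $A[2]$: the number of candidate subgroups $Z$ is at most the number of subgroups of $A[2]$ of index at most $2$, which is $|A[2]|\leq n$ (one subgroup of index $1$ together with $|A[2]|-1$ hyperplanes). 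For a fixed $Z$, the cyclic complements $C$ are in bijection with $\mathrm{Hom}(A/Z,Z)$; since $A/Z\cong\C_t$ and $Z$ has exponent $2$, this group has order $|\mathrm{Hom}(\C_t,Z)|=|Z[\gcd(t,2)]|$, which is $|Z|=n/t\leq n/4$ when $t$ is even and $1$ when $t$ is odd. Hence the number of admissible pairs $(C,Z)$ is at most about $n\cdot(n/4)$, in particular polynomial in $n$.

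Finally I would combine these counts, splitting according to the parity of $t$ to identify the worst case. When $t$ is odd (so $t\geq 5$ and $Z=A[2]$ is forced), the complement $C$ is unique and $|Z|\leq n/5$, making this contribution negligible. When $t$ is even, $Z$ has index $2$ in $A[2]$, so $|Z|=n/t$ and $m:=|A[2]|=2n/t$, and there are at most $(m-1)|Z|<2(n/4)^2$ pairs for the decisive value $t=4$, each contributing $4\cdot 2^{n/4}$. I would then sum the remaining contributions over even $t$ as a rapidly decaying series in $2^{n/t}$ dominated by its first term $t=4$. The main obstacle is precisely this bookkeeping: the $t=4$ term alone already has size $<\frac{n^2}{2}\,2^{n/4}$, so one must verify that the strictness of that bound (coming from $m-1$ rather than $m$) leaves enough slack, of order $n\cdot 2^{n/4}$, to absorb the entire tail $t\geq 5$, whose terms are $O(n^2)\cdot 2^{n/5}$ and hence exponentially smaller. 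The structural lemma that $Z$ has index at most $2$ in $A[2]$ is exactly what keeps every other factor merely polynomial, and thus harmless against the dominant $2^{n/4}$, yielding the clean bound $2^{n/4+2\log_2(n)-1}$.
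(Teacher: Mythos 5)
Your proposal is correct, and its skeleton is the same as the paper's: for a fixed decomposition there are exactly $4\cdot 2^{|Z|}\leq 4\cdot 2^{n/4}$ choices of $(S',S'')$, so the whole lemma reduces to a polynomial bound on the number of pairs $(C,Z)$. Where you differ is in how that polynomial bound is organized. The paper fixes one reference decomposition $A=\langle\lambda\rangle\times Z'$ and parameterizes directly: the candidates for $C$ are the subgroups $\langle\lambda k\rangle$ with $k\in Z'$ (at most $|Z'|$ of them), the candidates for $Z$ are the index-$2$ subgroups of the $2$-torsion subgroup $\langle\lambda^{t/2}\rangle\times Z'$ (at most $2|Z'|$ of them), and the single product $|Z'|\cdot 2|Z'|\cdot 4\cdot 2^{|Z'|}\leq n^2 2^{n/4-1}$ finishes the proof with no summation over $t$. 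You instead count the $Z$'s as subgroups of index at most $2$ in the $2$-torsion subgroup (via $A[2]=C[2]\times Z$, where $A[2]$ denotes the elements of order at most $2$), count the complements of a fixed $Z$ by $|\mathrm{Hom}(A/Z,Z)|$, and then sum over the possible orders $t$ with a dominant-term analysis at $t=4$; all of these steps are sound. Your route even has a genuine advantage: it handles transparently the fact that two values of $t$ can occur for the same group (when the odd part of $A$ has order at least $5$ and the $2$-part is elementary abelian, both $t_0$ odd and $2t_0$ give admissible decompositions), a point that the paper's parameterization of all $C$'s by a single reference order quietly glosses over. The price is the extra bookkeeping that you flag as the main obstacle.

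That obstacle, however, dissolves on inspection, so your proof closes without any delicate estimate. If a decomposition with $t=4$ exists, then $A\cong\C_4\times\C_2^b$ is a $2$-group of exponent $4$; in any admissible decomposition the cyclic factor then has order at most the exponent, hence exactly $4$, so \emph{every} term of your sum has $t=4$ and the tail is empty. Conversely, if no decomposition with $t=4$ exists, then every term has $t\geq 5$, and your own bounds give a total of at most $4\cdot 2^{n/5}+\frac{2n^2}{9}\,2^{n/6}$, which is below $\frac{n^2}{2}\,2^{n/4}$ for all relevant $n$ by a direct comparison (no slack of order $n\,2^{n/4}$ is needed). This case split is worth making explicit, because the comparison you proposed instead, namely absorbing a tail of size $O(n^2)2^{n/5}$ into a slack of size $n\,2^{n/4}$, amounts to $O(n)\leq 2^{n/20}$ and would be false for moderate $n$ if the $t=4$ term and the tail could coexist; the point is precisely that they cannot.
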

\begin{proof}
Clearly, we may assume that $A=\langle \lambda\rangle\times Z'$ for some elementary abelian $2$-group $Z'$ and some $\langle \lambda\rangle$ of order $t\geq 4$. If $t$ is odd, then this decomposition is unique. If $t$ is even, then the number of choices for $C$ is $|Z'|$ ($C=\langle \lambda k\rangle$ for some $k\in Z'$), while the number of choices for $Z$ is at most the number of subgroups of index $2$ in $\langle\lambda^{|\lambda|/2}\rangle\times Z'$, which is at most $2|Z'|$. Once $C$ and $Z$ are fixed we have $4$ choices for $S'$ and $2^{|Z|}$ choices for $S''$. Since $|Z|=|Z'|\leq n/4$, it follows that there are at most $|Z'|\cdot 2|Z'|\cdot 4\cdot 2^{|Z|}\leq n^22^{n/4-1}=2^{n/4+2\log(n)-1}$ quadruples.
\end{proof}

\begin{lemma}\label{lemma:semiwreathcounting}
Let $n$ be an integer that is not a power of $2$, let $A$ be an abelian group of order $n$ and let $m$ be the number of elements of order at most $2$ in $A$. Then the number of inverse-closed subsets $S$ of $A$ such that there exist two groups $H$ and $K$ with $1<H\leq K<A$, and such that $S\setminus K$ is a union of $H$-cosets is at most $2^{m/2+11n/24+2(\log_2(n))^2}$.
\end{lemma}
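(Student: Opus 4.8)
The plan is to adapt the counting in Lemma~\ref{lemma:semiwreathdigraphcounting} by taking the inverse-closed condition into account, which roughly halves the number of admissible subsets. As before, there are at most $2^{2(\log_2(n))^2}$ choices for the pair $(H,K)$, so it suffices to bound, for fixed $H$ and $K$, the number of inverse-closed sets $S$ with $S\setminus K$ a union of $H$-cosets. Write $h=|H|$, $k=|K|$, and for a subgroup $L\le A$ let $m_L$ denote the number of elements of order at most $2$ in $L$ (so $m_A=m$). Since $A$ is abelian, inversion $\iota$ preserves $K$, preserves $A\setminus K$, and sends the $H$-coset $xH$ to $x^{-1}H$; hence it induces an involution both on $K$ and on the set of $H$-cosets contained in $A\setminus K$. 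A set $S$ of the required form is obtained by freely choosing an inverse-closed subset of $K$ together with an $\iota$-invariant union of $H$-cosets outside $K$.

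Counting the fixed points of these two involutions, I would first record that $\iota$ fixes exactly the $m_K$ elements of order at most $2$ of $K$, so the number of inverse-closed subsets of $K$ is $2^{(k+m_K)/2}$. The coset $xH\subseteq A\setminus K$ is fixed by $\iota$ precisely when $x^2\in H$, and the number $f$ of such cosets equals $m_{A/H}-m_{K/H}$, since the fixed cosets $xH$ correspond to the elements of order at most $2$ of $A/H$, and those with $x\in K$ to such elements of $K/H$ (here $H\le K$ is used). Writing $d=(n-k)/h$ for the number of $H$-cosets in $A\setminus K$, the number of $\iota$-invariant unions of these cosets is $2^{(d+f)/2}$. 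Thus the count for fixed $(H,K)$ is $2^{(k+m_K+d+f)/2}$, and the problem reduces to proving the exponent bound
\[
k+m_K+d+f\le m+\frac{11n}{12}.
\]

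I would split according to $h$. Two elementary facts drive the estimate. First, $m\le n/3$: since $n$ is not a power of $2$, the Hall $2'$-subgroup of $A$ has order at least $3$, and all elements of order at most $2$ lie in the Sylow $2$-subgroup, so they number at most $n/3$. Second, because the minimal number of generators of an abelian $2$-group does not increase under quotients, one has $m_{A/H}\le m$ whenever $H$ is a $2$-group. When $h\ge 3$, the crude bounds $m_K\le m$, $f\le d$ and $d=(n-k)/h\le(n-k)/3$, together with $k\le n/2$, already yield $k+m_K+d+f\le m+5n/6$, which suffices.

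The remaining, and genuinely delicate, case is $h=2$, which I expect to be the main obstacle: here $\iota$ can fix cosets containing no involution, so one cannot charge each fixed coset to an involution of $A$, and a naive estimate fails when $m$ is large. To handle it I would use that reduction modulo $H$ maps the $m_K$ elements of order at most $2$ of $K$ into $K/H$ with kernel of size $m_H=2$, giving $m_{K/H}\ge m_K/2$ and hence $f=m_{A/H}-m_{K/H}\le m-m_K/2$, where $m_{A/H}\le m$ because $H\cong\C_2$ is a $2$-group. Combined with $m_K\le m$ this yields $m_K+f\le 3m/2$, while $k+d=(k+n)/2\le 3n/4$; therefore $k+m_K+d+f\le 3n/4+3m/2$. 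The bound $m\le n/3$ then gives exactly $3n/4+3m/2\le m+11n/12$, closing the argument. Multiplying the per-pair bound $2^{(k+m_K+d+f)/2}\le 2^{m/2+11n/24}$ by the $2^{2(\log_2(n))^2}$ choices of $(H,K)$ gives the claimed estimate.
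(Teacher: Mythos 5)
Your proposal is correct, and its skeleton coincides with the paper's: the same bound $2^{2(\log_2(n))^2}$ on the number of pairs $(H,K)$; the same decomposition of an admissible $S$ into an inverse-closed subset of $K$ plus an inversion-invariant union of $H$-cosets outside $K$; the same orbit count, since your exponent $(k+m_K+d+f)/2$ is exactly the paper's $j+\frac{k-j}{2}+\frac{i}{h}+\frac{n-k-i}{2h}$ under the dictionary $j=m_K$, $i=hf$; and the same case split $h\ge 3$ versus $h=2$. The genuine divergence is in the critical case $h=2$. The paper bounds the number $i$ of elements of $A\setminus K$ whose square lies in $H$: such an element has order dividing $4$, hence lies in the Sylow $2$-subgroup of $A$, which has order at most $n/3$ because $n$ is not a power of $2$; this yields $i\le n/3$ and then $\frac{n}{4}+\frac{i}{4}+\frac{k}{4}\le \frac{n}{4}+\frac{n}{12}+\frac{n}{8}=\frac{11n}{24}$. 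You instead bound the number of inversion-fixed cosets via $f=m_{A/H}-m_{K/H}\le m-m_K/2$, using monotonicity of the $2$-rank under quotients together with the fibre argument for the $2$-torsion of $K$ mapping into $K/H$, and you spend the ``$n$ is not a power of $2$'' hypothesis on the inequality $m\le n/3$ instead. Both routes invoke that hypothesis exactly once and both land on exactly $11n/24$, so neither is sharper; the paper's observation is shorter (one line about elements of order dividing $4$), while your version keeps the estimate phrased intrinsically in terms of involution counts of $A$, $K$ and their quotients by $H$, which makes the origin of the $m/2$ term in the final exponent a little more transparent.
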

\begin{proof}
As before, there are at most $2^{2(\log_2(n))^2}$ ways of choosing $H$ and $K$. We now count the number of possibilities for $S$ for fixed $H$ and $K$.

Let $h=|H|$, let $k=|K|$, let $j$ be the number of elements of order at most $2$ in $K$ and let $i$ be the number of elements of $A\setminus K$ whose square lies in $H$. Note that $x^2\in H$ if and only if $xH=(xH)^{-1}$ and hence $A$ admits exactly $2^{j+\frac{k-j}{2}+\frac{i}{h}+\frac{n-k-i}{2h}}$ inverse-closed subsets $S$ such that $S\setminus K$ is a union of $H$-cosets. Note that $j\leq m$ and $\frac{k}{2}+\frac{i}{h}+\frac{n-k-i}{2h}=\frac{n}{2h}+\frac{i}{2h}+k\left(\frac{h-1}{2h}\right)$,  hence it suffices to show that $\frac{n}{2h}+\frac{i}{2h}+k\left(\frac{h-1}{2h}\right)\leq 11n/24$.

Note that $i\leq n-k$ and $k\leq n/2$ hence $\frac{n}{2h}+\frac{i}{2h}+k\left(\frac{h-1}{2h}\right)\leq \frac{n}{h}+k\left(\frac{h-2}{2h}\right)\leq n\left(\frac{h+2}{4h}\right)$. This concludes the proof when $h\geq 3$.

If $h=2$, then an element whose square lies in $H$ must be contained in the Sylow $2$-subgroup of $A$. Since $A$ is not a $2$-group, there are at most $n/3$ such elements and hence $i\leq n/3$. Since $k\leq n/2$, it follows that $\frac{n}{2h}+\frac{i}{2h}+k\left(\frac{h-1}{2h}\right)\leq n/4+n/12+n/8=11n/24$. This concludes the proof.
\end{proof}

\begin{lemma}\label{lemma:normalizedcounting}
Let $A$ be an abelian group of order $n$ and of exponent greater than $2$, let $m$ be the number of elements of order at most $2$ in $A$ and let $\iota:A\to A$ be the automorphism defined by $\iota:x\mapsto x^{-1}$. Then the number of inverse-closed subsets of $A$ which are normalized by some element of $\mathrm{Aut}(A)\setminus\{1,\iota\}$ is at most $2^{m/2+11n/24+(\log_2(n))^2}$.
\end{lemma}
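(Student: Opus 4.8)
The plan is to follow the template of Lemma~\ref{lemma:normalizeddigraphcounting}: bound the number of admissible automorphisms and, for each fixed one, bound the number of subsets it fixes, then combine by a union bound. Since $|\Aut(A)|\le 2^{(\log_2 n)^2}$, it suffices to show that for each fixed $\varphi\in\Aut(A)\setminus\{1,\iota\}$ the number of inverse-closed subsets normalized by $\varphi$ is at most $2^{m/2+11n/24}$. Such a subset is fixed setwise by both $\iota$ and $\varphi$, hence by $H=\langle\iota,\varphi\rangle$, and conversely every $H$-invariant subset qualifies; so the count is exactly $2^{o}$, where $o$ is the number of orbits of $H$ on $A$. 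Thus everything reduces to proving $o\le m/2+11n/24$.

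To count the $H$-orbits I would exploit that $\iota$ is central in $\Aut(A)$ (inversion commutes with every automorphism of an abelian group), so $\varphi$ permutes the orbits of $\langle\iota\rangle$ and $o$ equals the number of $\langle\varphi\rangle$-orbits on the set $\Theta$ of $\iota$-orbits. Writing $I$ for the subgroup of elements of order at most $2$ (so $|I|=m$), the set $\Theta$ consists of the $m$ singletons $\{x\}$ with $x\in I$ and the $(n-m)/2$ pairs $\{x,x^{-1}\}$ with $x$ of order greater than $2$, so $|\Theta|=(m+n)/2$. Bounding the orbit count crudely by $o\le(|\Theta|+f)/2$, where $f$ is the number of elements of $\Theta$ fixed by $\varphi$, I would identify the fixed orbits: a singleton $\{x\}$ is fixed precisely when $x\in \cent A\varphi\cap I$, and a pair $\{x,x^{-1}\}$ is fixed precisely when $x\in \cent A\varphi\cup D$, where $D=\{x\in A:\varphi(x)=x^{-1}\}=\cent A{\iota\varphi}$. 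Writing $C=\cent A\varphi$ and using the easily checked equalities $C\cap I=D\cap I=C\cap D\subseteq I$, a short count collapses to $f=(|C|+|D|)/2$, whence $o\le (m+n+|C|+|D|)/4$.

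The crux is therefore the estimate $|C|+|D|\le m+3n/4$, and this is where I expect the only real work. The two key facts are that $C$ and $D$ are \emph{proper} subgroups of $A$---$C\neq A$ because $\varphi\neq 1$, and $D\neq A$ because $\iota\varphi\neq 1$, i.e.\ $\varphi\neq\iota$---and that, $A$ being abelian, $CD$ is a subgroup, so $|C\cap D|\ge |C||D|/n$. Setting $a=|C|/n$ and $b=|D|/n$, both at most $1/2$, this gives $|C\cup D|/n\le a+b-ab=1-(1-a)(1-b)\le 3/4$; this is just the statement that two proper subgroups of $A$ cover at most three quarters of it. Since moreover $C\cap D\subseteq I$ forces $|C\cap D|\le m$, I obtain $|C|+|D|=|C\cup D|+|C\cap D|\le 3n/4+m$, and hence $o\le (m+n+3n/4+m)/4=m/2+7n/16\le m/2+11n/24$. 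Feeding this into the union bound over the at most $2^{(\log_2 n)^2}$ choices of $\varphi$ yields the claimed bound $2^{m/2+11n/24+(\log_2 n)^2}$. The main obstacle is not any single hard step but correctly isolating the two auxiliary subgroups $C$ and $D$ and recognizing that the inverse-closed constraint is exactly what makes their intersection land inside $I$; once that is seen, the generous slack between $7n/16$ and $11n/24$ makes the arithmetic routine.
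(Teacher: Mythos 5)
Your proof is correct, but it takes a genuinely different route from the paper's. The paper follows the same outer template (a union bound over the at most $2^{(\log_2 n)^2}$ choices of $\varphi$, reducing to a count of orbits of $\langle\iota,\varphi\rangle$ on $A$), but its core estimate is a case analysis on the order of $\varphi$: after replacing $\varphi$ by a suitable power it treats separately (i) $|\varphi|=p$ an odd prime, (ii) $|\varphi|$ a power of $2$ with $\iota\in\langle\varphi\rangle$ (so one may take $\varphi^2=\iota$), and (iii) $|\varphi|=2$ with $\langle\iota,\varphi\rangle$ a Klein four group, writing down the exact orbit-length spectrum in each case and then performing a further sub-case analysis on the sizes of the fixed-point subgroups (e.g.\ whether $|\cent A\varphi|\le n/3$ or $=n/2$, and whether $m=n/2$ or $n/4$). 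You replace all of this by a single uniform estimate: non-fixed $\varphi$-orbits on the set $\Theta$ of $\iota$-orbits have length at least $2$; the number of $\varphi$-fixed elements of $\Theta$ is exactly $(|C|+|D|)/2$ with $C=\cent A\varphi$ and $D=\cent A{\iota\varphi}$ (using $C\cap I=D\cap I=C\cap D$, which is correct); and then two clean facts finish the job, namely that $C$ and $D$ are \emph{proper} subgroups of the abelian group $A$, so $|C\cap D|\ge |C||D|/n$ forces $|C\cup D|\le 3n/4$, and that $C\cap D$ lies in the subgroup of involutions, so $|C\cap D|\le m$. I checked the key identities and inequalities and they all hold. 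What the paper's finer analysis buys is sharper exponents in individual cases (e.g.\ $m/2+n/3$ when $|\varphi|$ has an odd prime divisor, because orbits there have length $p$ or $2p$); what your approach buys is the elimination of all case distinctions and of the replacement-by-powers reductions, plus a uniform exponent $m/2+7n/16+(\log_2 n)^2$, which is in fact slightly stronger than the stated bound since $7/16<11/24$. Both arguments are complete; yours is arguably cleaner and marginally sharper.
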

\begin{proof}
Recall that $|\mathrm{Aut}(A)|\leq 2^{(\log_2(n))^2}$. Let $\varphi\in\mathrm{Aut}(A)\setminus\{1,\iota\}$. Note that an inverse-closed subset is normalized by $\varphi$ if and only if it is normalized by $\langle\iota,\varphi\rangle$. It thus suffices to show that the number of inverse-closed subsets of $A$ which are normalized by $\langle\iota,\varphi\rangle$ is at most $2^{m/2+11n/24}$.

Note that $|\iota|=2$, that $\iota$ commutes with every automorphism of $A$ and that $m=|\cent A\iota|$. Let $c=|\cent A\varphi|$ and let $k=|\cent A{\iota,\varphi}|$.

Suppose first that $|\varphi|$ is divisible by some odd prime $p$. Replacing $\varphi$ by a suitable power, we may assume without loss of generality that $|\varphi|=p$. Observe that $\langle\iota,\varphi\rangle=\langle\iota \varphi\rangle$ is cyclic of  order $2p$. Now, $\iota\varphi$ induces orbits of length $1$ on $\cent A{\iota,\varphi}$, of length $2$ on $\cent A\varphi\setminus \cent A\iota$, of length $p$ on $\cent A \iota\setminus \cent A \varphi$, and of length $2p$ on $A\setminus (\cent A\iota\cup \cent A \varphi)$. It follows  that the number of subsets of $A$ which are normalized by $\langle\iota,\varphi\rangle$ is
$$2^{k}2^{(c-k)/2}2^{(m-k)/p}2^{(n-(c+m-k))/(2p)}\leq 2^{k/3+c/3+m/6+n/6}\leq 2^{m/2+n/3},$$
where the first inequality follows from the fact that $p\geq 3$ and the last inequality from $k\leq m$ and $c\leq n/2$.

Suppose now that $|\varphi|$ is a power of $2$. We first assume that $\iota\in \langle\varphi\rangle$ and observe that $\cent A \varphi\leq \cent A \iota$. By replacing $\varphi$ by a suitable power, we may assume that $\varphi^2=\iota$ and hence $\varphi$ induces orbits of length $1$ on $\cent A \varphi$, of length $2$ on $\cent A \iota\setminus \cent A \varphi$, and of length $4$ on $A\setminus \cent A \iota$.  It follows  that the number of subsets of $A$ which are normalized by $\langle\varphi\rangle$ is
$$2^{c}2^{(m-c)/2}2^{(n-m)/4}= 2^{c/2+m/4+n/4}\leq 2^{m/2+3n/8},$$
where we have used the facts that $m\leq n/2$ and $c\leq m$.

It remains to consider the case $\iota\notin\langle\varphi\rangle$. Replacing $\varphi$ by a suitable power, we may assume that $|\varphi|=2$  and $\langle\iota,\varphi\rangle$ is an elementary abelian group of order $4$. It follows that $\langle\iota,\varphi\rangle$ induces orbits of length $1$ on $\cent A{\iota,\varphi}$, of length $2$ on $(\cent A \varphi\cup \cent A\iota\cup\cent A {\iota\varphi})\setminus\cent A{ \iota,\varphi}$, and of length $4$ on $A\setminus(\cent A \varphi\cup \cent A\iota\cup\cent A {\iota\varphi})$. Let $c'=|\cent  A{\iota \varphi}|$. The number of subsets of $A$ which are normalized by $\langle\iota,\varphi\rangle$ is
$$2^{k}2^{(m-k)/2}2^{(c-k)/2}2^{(c'-k)/2}2^{(n-(m+c+c'-2k))/4}=2^{m/4+c/4+c'/4+n/4}.$$

If one of $c$ or $c'$ is at most $n/3$, then $c/4+c'/4+n/4\leq n/8+n/12+n/4=11n/24$ and the conclusion holds. We may thus assume that $c=c'=n/2$. If $m=n/2$, then $m/4+c/4+c'/4+n/4=m/2+3n/8$ and the conclusion holds. We thus assume that $m<n/2$ and thus $\cent A \iota=\cent A{\iota\varphi,\varphi}$ has index $4$ in $A$. Thus $m=n/4$ and $m/4+c/4+c'/4+n/4=m/2+7n/16$.
\end{proof}

The upper bounds in Lemmas~\ref{lemma:semiwreathcounting} and~\ref{lemma:normalizedcounting} should not be taken too seriously since they are probably far from best possible, but they are sufficient to prove Theorem~\ref{th:epsilon2}.

We now introduce notation corresponding to that in the preceding section but for inverse-closed subsets. Let $2^A_*$ denote the set of inverse-closed subsets of $A$ and let $2^A_{*Small}$ denote the set of inverse-closed subsets $S$ of $A$ such that $\Aut(\Cay(A,S))=A\rtimes\langle\iota\rangle$. Let $2^A_{*ex}$ denote the set of inverse-closed subsets $S$ of $A$ with $A=C\times Z$ and $S=S'\times S''$, where $C$ is a cyclic group of order at least $4$, $Z$ is an elementary abelian $2$-group, $S'\in \{C,\emptyset,\{1\},C\setminus\{1\}\}$, and $S''\subseteq Z$, let $2^A_{*gw}$ denote the empty set if $|A|$ is a prime power and, otherwise, let $2^A_{*gw}$ denote the set of inverse-closed subsets $S$ of $A$ with the property that $\Cay(A,S)$ is a generalized wreath graph with respect to $(H,K,A)$, for some subgroups $H,K\leq A$.  Let $2^A_{*nor}$ denote the set of inverse-closed subsets $S$ of $A$ with the property that $\Cay(A,S)$ admits an element of $\Aut(A)\setminus\{1,\iota\}$ as a graph automorphism. Finally, let $2^A_{*bad}=2^A_{*ex} \cup 2^A_{*gw} \cup 2^A_{*nor}$ and let $2^A_{*good}=2^A_*\setminus 2^A_{*bad}$.

\begin{proof}[Proof of Theorems~$\ref{th:main2}$ and~$\ref{th:epsilon2}$]
If $A$ has exponent at most $2$, $2^A=2^A_*$, and every Cayley digraph on $A$ is actually a Cayley graph, and the result follows from Theorems~$\ref{th:main1}$ and~$\ref{th:epsilon1}$. We thus assume that $A$ has exponent greater than $2$. Let $\iota:A\to A$ be the automorphism defined by $\iota:x\mapsto x^{-1}$, let $B=A\rtimes\langle\iota\rangle$ and observe that $B$ is generalized dihedral over $A$. Let $m$ be the number of elements of order at most $2$ in $A$.

It follows  from Theorem~\ref{thm:graphmain2} that $2^A_{*good}\subseteq 2^A_{*Small}$ and hence $2_*^A\setminus 2^A_{*Small}\subseteq 2^A_{*bad}$. By Lemmas~\ref{lemma:strangecounting},~\ref{lemma:semiwreathcounting} and~\ref{lemma:normalizedcounting}, we have   $|2^A_{*ex}|\leq 2^{n/4+2\log(n)-1}$, $|2^A_{*gw}|\leq 2^{m/2+11n/24+2(\log_2(n))^2}$  and $|2^A_{*nor}|\leq 2^{m/2+11n/24+(\log_2(n))^2}$. It follows that $|2^A_{*bad}|\leq 2^{m/2+11n/24+2(\log_2(n))^2+2}$. This shows Theorem~\ref{th:epsilon2}. Since $|2_\ast^A|=2^m2^{(n-m)/2}=2^{m/2+n/2}$, we have $|2^A_{*bad}|/|2_*^A|\to 0$ as $n\to\infty$ and Theorem~\ref{th:main2} follows.
\end{proof}

\section{Unlabeled digraphs}\label{sec:unlabelled}
An  \emph{unlabeled} (di)graph is simply an equivalence class of (di)graphs under the relation ``being isomorphic to''. We will often identify a representative with its class. Using this terminology, we have the following unlabeled version of Theorems~\ref{th:main1} and~\ref{th:epsilon1}.

\begin{theorem}\label{th:unlabelledmain1}
Let $A$ be an abelian group of order $n$. Then the ratio of the number of unlabeled DRRs on $A$ over the number of unlabeled Cayley digraphs on $A$ tends to $1$ as $n\to\infty$.
\end{theorem}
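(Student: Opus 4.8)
The plan is to relate the two unlabeled counts to the labeled (subset) counts controlled by Theorem~\ref{th:epsilon1}. The first step is to determine exactly when two connection sets giving DRRs yield isomorphic digraphs. If $\Cay(A,S)$ and $\Cay(A,T)$ are both DRRs and $\phi$ is an isomorphism between them, then $\phi$ conjugates $\Aut(\Cay(A,S))$ onto $\Aut(\Cay(A,T))$; since both automorphism groups coincide with the regular representation $R(A)$, the map $\phi$ normalizes $R(A)$ and hence lies in the holomorph $\mathrm{Hol}(A)=R(A)\rtimes\Aut(A)$. Writing $\phi=\tau\alpha$ with $\tau\in R(A)$ and $\alpha\in\Aut(A)$, and using the fact that $\tau$ is already an automorphism of $\Cay(A,T)$, I would conclude that $\alpha=\tau^{-1}\phi$ is itself an isomorphism $\Cay(A,S)\to\Cay(A,T)$. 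Since an element of $\Aut(A)$ carries $\Cay(A,S)$ onto $\Cay(A,\alpha(S))$, this forces $T=\alpha(S)$; conversely each $\alpha\in\Aut(A)$ produces such an isomorphism. Thus the isomorphism class of a DRR $\Cay(A,S)$ corresponds precisely to the $\Aut(A)$-orbit of $S$.

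Second, I would verify that this orbit has full size $|\Aut(A)|$. If $\alpha\in\Aut(A)$ fixes $S$ setwise, then $\alpha$ is an automorphism of $\Cay(A,S)$ fixing the vertex $1$, so $\alpha$ lies in the stabilizer of $1$ in $\Aut(\Cay(A,S))=R(A)$, which is trivial because $R(A)$ is regular. Hence the set $2^A_{DRR}$ of DRR connection sets partitions into $\Aut(A)$-orbits, each of cardinality exactly $|\Aut(A)|$ and each constituting a single unlabeled DRR. Writing $U_{DRR}$ for the number of unlabeled DRRs and $U_{CayD}$ for the number of unlabeled Cayley digraphs on $A$, this gives the exact identity $U_{DRR}=|2^A_{DRR}|/|\Aut(A)|$.

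Third, for the denominator I would use the crude but sufficient observation that every isomorphism class of non-DRR Cayley digraphs contains at least one connection set, so the number of unlabeled non-DRR Cayley digraphs is at most the number $|2^A\setminus 2^A_{DRR}|$ of non-DRR connection sets. Since the unlabeled DRRs form a subset of all unlabeled Cayley digraphs, this yields
$$\frac{|2^A_{DRR}|}{|\Aut(A)|}=U_{DRR}\leq U_{CayD}\leq U_{DRR}+|2^A\setminus 2^A_{DRR}|.$$

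Finally, I would feed in the quantitative bounds. By Theorem~\ref{th:epsilon1} we have $|2^A\setminus 2^A_{DRR}|\leq 2^{3n/4+2(\log_2(n))^2+1}$, while $|\Aut(A)|\leq 2^{(\log_2(n))^2}$; hence $U_{DRR}=|2^A_{DRR}|/|\Aut(A)|\geq(2^n-2^{3n/4+2(\log_2(n))^2+1})2^{-(\log_2(n))^2}$, which exceeds $2^{n-1-(\log_2(n))^2}$ for all sufficiently large $n$. Combining this with the chain of inequalities above gives
$$1\geq\frac{U_{DRR}}{U_{CayD}}\geq 1-\frac{|2^A\setminus 2^A_{DRR}|}{U_{DRR}}\geq 1-2^{-n/4+3(\log_2(n))^2+2},$$
and the right-hand side tends to $1$ as $n\to\infty$, proving the theorem. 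I expect the only genuine obstacle to lie in the first step, namely the careful verification that every isomorphism between two DRRs on $A$ is the composite of a translation and a group automorphism of $A$; once this structural fact is in place, the remainder is elementary counting combined with the already-established Theorem~\ref{th:epsilon1}.
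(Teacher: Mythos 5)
Your proposal is correct and takes essentially the same route as the paper: every isomorphism between two DRRs on $A$ normalizes the regular representation and hence reduces to an element of $\Aut(A)$, so unlabeled DRRs correspond to $\Aut(A)$-orbits on $2^A_{DRR}$, and combining $|\UDRR(A)|\geq |2^A_{DRR}|/|\Aut(A)|$ with the bound on non-DRR subsets from Theorem~\ref{th:epsilon1} yields the limit. Your refinements---the exact orbit size $|\Aut(A)|$ (via triviality of the vertex-stabilizer in a regular group) and the explicit convergence rate $1-2^{-n/4+3(\log_2(n))^2+2}$---are correct but not needed, since the paper only uses the lower bound on the number of unlabeled DRRs.
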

\begin{proof}
Let $\UDRR(A)$ denote the set of unlabeled DRRs on $A$, let $S_1,S_2\in 2^A_{DRR}$ and let $\vGa_1=\Cay(A,S_1)$ and $\vGa_2=\Cay(A,S_2)$.
Suppose that $\vGa_1\cong\vGa_2$ and let $\varphi$ be a digraph isomorphism from $\vGa_1$ to $\vGa_2$. Note that $\varphi$ induces a group automorphism from $\Aut(\vGa_1)=A$ to $\Aut(\vGa_2)=A$. In particular, $\varphi\in \Aut(A)$ and $S_1$ and $S_2$ are conjugate via an element of $\Aut(A)$. This shows that $|\UDRR(A)|\geq |2^A_{DRR}|/|\Aut(A)|$. By Theorem~\ref{th:epsilon1}, we have $|2^A_{DRR}|\geq 2^n-2^{3n/4+2(\log_2(n))^2+1}$. Since $|\Aut(A)|\leq  2^{(\log_2(n))^2}$, it follows that
$$|\UDRR(A)|\geq 2^{n-(\log_2(n))^2}-2^{3n/4+(\log_2(n))^2+1}.$$
Let $\UCDN(A)$ denote the set of unlabeled Cayley digraphs on $A$ that are not DRRs. Note that
$$\frac{|\UDRR(A)|}{|\UDRR(A)|+|\UCDN(A)|}=1-\frac{|\UCDN(A)|}{|\UDRR(A)|+|\UCDN(A)|}\geq 1-\frac{|\UCDN(A)|}{|\UDRR(A)|}.$$
By Theorem~\ref{th:epsilon1}, we have $|\UCDN(A)|\leq 2^{3n/4+2(\log_2(n))^2+1}$ and thus
$$\frac{|\UCDN(A)|}{|\UDRR(A)|}\to 0,$$
as $n\to\infty$. This completes the proof.
\end{proof}

We now prove the corresponding theorem for unlabeled graphs.

\begin{theorem}\label{th:unlabelledmain2}
Let $A$ be an abelian group of order $n$ and let $B=A\rtimes\langle\iota\rangle$. Then the ratio of the number of unlabeled Cayley graphs on $A$ with automorphism group $B$ over the number of unlabeled Cayley graphs on $A$ tends to $1$ as $n\to\infty$.
\end{theorem}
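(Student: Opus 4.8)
The plan is to follow the proof of Theorem~\ref{th:unlabelledmain1} almost line by line, with $B=A\rtimes\langle\iota\rangle$ playing the role that the regular group $A$ played in the DRR case and $2^A_{*Small}$ the role of $2^A_{DRR}$. As in the proof of Theorems~\ref{th:main2} and~\ref{th:epsilon2}, I would first dispose of the case when $A$ has exponent at most $2$: then $\iota=1$, so $B=A$, every subset of $A$ is inverse-closed, every Cayley digraph on $A$ is a graph, and the statement is precisely Theorem~\ref{th:unlabelledmain1}. So I would assume from now on that $A$ has exponent greater than $2$.

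The heart of the argument is the reduction of graph-isomorphism to $\Aut(A)$-conjugacy of connection sets. Let $A_R\leq\Sym(A)$ denote the right regular representation of $A$, so that $\Aut(\Cay(A,S))\supseteq B=A_R\rtimes\langle\iota\rangle$ for every inverse-closed $S$. Take $S_1,S_2\in 2^A_{*Small}$, put $\vGa_i=\Cay(A,S_i)$, and suppose $\varphi$ is a graph isomorphism $\vGa_1\to\vGa_2$. Since $\Aut(\vGa_1)=\Aut(\vGa_2)=B$ as subgroups of $\Sym(A)$, the map $\varphi$ normalizes $B$. The one genuinely new point, compared with the digraph case, is that $A_R$ is \emph{characteristic} in $B$. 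I would prove this by the following short observation: every element of $B\setminus A_R$ is an involution, so the subgroup generated by the elements of order greater than $2$ is contained in $A_R$; conversely, if $a\in A_R$ is an involution and $b\in A_R$ has order greater than $2$ (such $b$ exists as $A$ has exponent greater than $2$), then $(ab)^2=b^2\neq 1$, so $ab$ has order greater than $2$ and therefore $a=(ab)b^{-1}$ lies in this subgroup; hence the subgroup equals $A_R$, and being generated by the elements of a prescribed order, it is characteristic in $B$.

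Consequently $\varphi$ normalizes $A_R$, so $\varphi\in\nor{\Sym(A)}{A_R}=A_R\rtimes\Aut(A)$. Writing $\varphi=\tau\psi$ with $\tau\in A_R$ and $\psi\in\Aut(A)$, the map $\psi=\tau^{-1}\varphi$ is still an isomorphism $\vGa_1\to\vGa_2$ (because $\tau^{-1}\in A_R\leq\Aut(\vGa_2)$), it fixes the vertex $1$, and it is a group automorphism of $A$; since it fixes $1$ and preserves adjacency, it carries the neighbourhood $S_1$ of $1$ onto the neighbourhood $S_2$ of $1$, so $S_1$ and $S_2$ are $\Aut(A)$-conjugate. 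As the converse is clear, the isomorphism classes inside $2^A_{*Small}$ are exactly the $\Aut(A)$-orbits, whence $|\UAGRR(A)|\geq|2^A_{*Small}|/|\Aut(A)|$.

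The remainder is arithmetic. Using Theorem~\ref{th:epsilon2}, the equality $|2^A_*|=2^{m/2+n/2}$, and $|\Aut(A)|\leq 2^{(\log_2(n))^2}$, I would obtain $|\UAGRR(A)|\geq 2^{m/2+n/2-(\log_2(n))^2}-2^{m/2+11n/24+(\log_2(n))^2+2}$. Letting $\UCGN(A)$ denote the set of unlabeled Cayley graphs on $A$ whose automorphism group strictly contains $B$, each such class has a representative with connection set in $2^A_*\setminus 2^A_{*Small}$, so $|\UCGN(A)|\leq 2^{m/2+11n/24+2(\log_2(n))^2+2}$ by Theorem~\ref{th:epsilon2}. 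Exactly as in the proof of Theorem~\ref{th:unlabelledmain1},
\[
\frac{|\UAGRR(A)|}{|\UAGRR(A)|+|\UCGN(A)|}=1-\frac{|\UCGN(A)|}{|\UAGRR(A)|+|\UCGN(A)|}\geq 1-\frac{|\UCGN(A)|}{|\UAGRR(A)|}.
\]
The factor $2^{m/2}$ cancels in the quotient $|\UCGN(A)|/|\UAGRR(A)|$, and comparing the numerator exponent $11n/24+2(\log_2(n))^2+2$ with the dominant denominator exponent $n/2-(\log_2(n))^2$ shows that this quotient tends to $0$ as $n\to\infty$, giving the result. I expect the only real obstacle to be the characteristic-subgroup observation; once $A_R$ is known to be characteristic in $B$, everything else is a faithful transcription of the digraph proof together with the estimates of Theorem~\ref{th:epsilon2}.
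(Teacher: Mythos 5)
Your proof is correct and follows essentially the same route as the paper's: reduce to exponent greater than $2$, show $A$ is characteristic in $B$, deduce that isomorphic graphs with connection sets in $2^A_{*Small}$ have $\Aut(A)$-conjugate connection sets so that $|\UAGRR(A)|\geq |2^A_{*Small}|/|\Aut(A)|$, and then run the identical counting argument from Theorem~\ref{th:epsilon2}. Your two minor variations---generating $A$ by its elements of order greater than $2$ (the paper instead observes $A$ is exactly the elements of order greater than $2$ together with $\Z B$) and passing explicitly through $\nor{\Sym(A)}{A}=A\rtimes\Aut(A)$ to replace $\varphi$ by an automorphism fixing the identity---are sound, and the latter actually spells out a step the paper treats somewhat loosely.
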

\begin{proof}
Let $\UAGRR(A)$ denote the set of unlabeled Cayley graphs on $A$ with automorphism group $B$. If $A$ has exponent at most $2$, then $\iota=1$ and every Cayley digraph on $A$ is actually a Cayley graph, and the result follows from Theorem~\ref{th:unlabelledmain1}. We thus assume that $A$ has exponent greater than $2$. It follows that $A$ consists exactly of the elements of $B$ of order greater than $2$ together with the center of $B$ and hence $A$ is characteristic in $B$.

Let $S_1,S_2\in 2^A_{*Small}$ and let $\Gamma_1=\Cay(A,S_1)$ and $\Gamma_2=\Cay(A,S_2)$.  Suppose that $\Gamma_1\cong\Gamma_2$ and let $\varphi$ be a graph isomorphism from $\Gamma_1$ to $\Gamma_2$. Note that $\varphi$ induces a group isomorphism from $\Aut(\Gamma_1)=B$ to $\Aut(\Gamma_2)=B$ and hence $\varphi\in\Aut(B)$. Since $A$ is characteristic in $B$, $\varphi\in\Aut(A)$ and $S_1$ and $S_2$ are conjugate via an element of $\Aut(A)$. This shows that $|\UAGRR(A)|\geq |2^A_{*Small}|/|\Aut(A)|$. Let $m$ be the number of elements of order at most $2$ of $A$. By Theorem~\ref{th:epsilon2}, we have $|2^A_{*Small}|\geq 2^{m/2+n/2}-2^{m/2+11n/24+2(\log_2(n))^2+2}$. Since $|\Aut(A)|\leq  2^{(\log_2(n))^2}$, it follows that
$$|\UAGRR(A)|\geq 2^{m/2+n/2-(\log_2(n))^2}-2^{m/2+11n/24+(\log_2(n))^2+2}.$$

Let $\UCGN(A)$ denote the set of unlabeled Cayley graphs on $A$ with automorphism group strictly greater than $B$. Note that
$$\frac{|\UAGRR(A)|}{|\UAGRR(A)|+|\UCGN(A)|}=1-\frac{|\UCGN(A)|}{|\UAGRR(A)|+|\UCGN(A)|}\geq 1-\frac{|\UCGN(A)|}{|\UAGRR(A)|}.$$
By Theorem~\ref{th:epsilon2}, we have $|\UCGN(A)|\leq 2^{m/2+11n/24+2(\log_2(n))^2+2}$ and thus
$$\frac{|\UCGN(A)|}{|\UAGRR(A)|}\to 0,$$
as $n\to\infty$. This completes the proof.
\end{proof}

\noindent\textbf{Acknowledgment:} We would like to thank the anonymous referees for their many helpful comments.

\thebibliography{99}

\bibitem{Ba2}L.~Babai, On a conjecture of M. E. Watkins on graphical regular representations of finite groups, \textit{Compositio Math.} \textbf{37} (1978), 291--296.

\bibitem{Ba}L.~Babai, Finite digraphs with given regular automorphism groups, \textit{Period. Math. Hungar.} \textbf{11} (1980), 257--270.

\bibitem{BaGo}L.~Babai, C.~D.~Godsil, On the automorphism groups of almost all Cayley graphs, \textit{European J. Combin.} \textbf{3} (1982), 9--15.

\bibitem{BhoumikDM2013} S.~Bhoumik, E.~Dobson, J.~Morris, On The Automorphism Groups of Almost All Circulant Graphs and Digraphs, to appear, \textit{Ars Math. Contemp.}

\bibitem{CJS}C.~Casolo, E.~Jabara, P.~Spiga, On the Fitting height of factorised soluble groups, \textit{J. Group Theory}, to appear.

\bibitem{ATLAS}J.~H.~Conway, R.~T.~Curtis, S.~P.~Norton, R.~A.~Parker, R.~A.~Wilson, \textit{Atlas of Finite Groups}, Clarendon Press, Oxford, 1985.

\bibitem{Dobson2010b}E.~Dobson, Asymptotic automorphism groups of Cayley digraphs and graphs of abelian groups of prime-power order, \textit{Ars Math. Contemp.} \textbf{3} (2010), 200--213.

\bibitem{EvdokimovP2002} S.~A. Evdokimov, I.~N. Ponomarenko, Characterization of cyclotomic schemes and normal {S}chur rings over a cyclic group, \textit{Algebra i Analiz} \textbf{14} (2002), 11--55.

\bibitem{Godsil} C.~D.~Godsil, GRRs for nonsolvable groups, \textit{Algebraic methods in graph theory}, Vol. I, II (Szeged, 1978), pp. 221--239, Colloq. Math. Soc. J\'anos Bolyai, Amsterdam-New York, 1981.

\bibitem{Go2}C.~D.~Godsil, On the full automorphism group of a graph, \textit{Combinatorica} \textbf{1} (1981), 243--256.

\bibitem{Hetzel} D.~Hetzel,  \"{U}ber regul\"{a}re graphische Darstellungen von aufl\"{o}sbaren Gruppen, Technische Universit\"{a}t, Berlin, 1976. (Diplomarbeit)

\bibitem{Imrich} W.~Imrich, Graphical regular representations of groups of odd order, \textit{Combinatorics} (Proc. Colloq., Keszthely, 1976), Bolyai--North-Holland, 1978, 611--621.

\bibitem{Isaacs} M.~Isaacs, \textit{Finite Group Theory}, Graduate Studies in Mathematics 92, (2008).

\bibitem{JabSpiga} E.~Jabara, P.~Spiga, Abelian Carter subgroups in finite permutation groups, Arch. Math. \textbf{101} (2013), 301--307.

\bibitem{KurStell} H.~Kurzweil, B.~Stellmacher, \textit{The Theory of Finite Groups, An Introduction}, Universitext, Springer 2004.

\bibitem{LeungM1996} K.~H.~Leung, S.~H.~Man, On Schur rings over cyclic groups. II, \textit{J. Algebra} \textbf{183} (1996), 273--285.

\bibitem{LeungM1998} K.~H.~Leung, S.~H.~Man, On Schur rings over cyclic groups, \textit{Israel J. Math.} \textbf{106} (1998), 251--267.

\bibitem{Li2005} C.~H.~Li, Permutation groups with a cyclic regular subgroup and arc-transitive circulants, \textit{J. Algebraic Combin.} \textbf{21} (2005), 131--136.

\bibitem{LiZ2011} C.~H.~Li, H.~Zhang, The finite primitive groups with soluble stabilizers, and the edge-primitive s-arc transitive graphs, \textit{Proc. London Math. Soc.} \textbf{103} (2011), 441--472.

\bibitem{ONAN} M.~W.~Liebeck, C.~E.~Praeger, J.~Saxl, On the O'Nan-Scott theorem for finite primitive permutation groups, \textit{J. Austral. Math. Soc. Ser. A} \textbf{44} (1988), 389--396.

\bibitem{LiebeckPS1990} M.~W.~Liebeck, C.~E.~Praeger, J.~Saxl, The maximal factorizations of the finite simple groups and their automorphism groups, \textit{Mem. Amer. Math. Soc.} \textbf{86} (1990).

\bibitem{Morris} J.~Morris, P.~Spiga, G.~Verret, Automorphisms of Cayley graphs on generalised dicyclic groups,  \texttt{arXiv:1310.0618 [math.CO]}.

\bibitem{Nowitz1968} L.~A.~Nowitz, On the non-existence of graphs with transitive generalized dicyclic groups \textit{J. Combinatorial Theory}, \textbf{4} (1968), 49--51.

\bibitem{NowWat} L.~A.~Nowitz, M.~E.~Watkins, Graphical regular representations of non-abelian groups I-II, \textit{Canad. J. Math.} \textbf{24} (1972), 993--1008 and 1009--1018.

\bibitem{Enumeration} P.~Poto\v{c}nik, P.~Spiga, G.~Verret, Asymptotic enumeration of vertex-transitive graphs of fixed valency, \texttt{arXiv:1210.5736 [math.CO]}.

\bibitem{Suzuki1982} M.~Suzuki, \textit{Group Theory I}, Springer-Verlag, 1982.

\bibitem{Xu1998} M.Y.~Xu, Automorphism groups and isomorphisms of Cayley digraphs, \textit{Discrete Math.} \textbf{182} (1998), 309--319.

\bibitem{Yoshida} T.~Yoshida, Character theoretic transfer, \textit{J. Algebra} \textbf{52} (1978), 1--38.

\end{document}